\documentclass[11pt]{article}
\usepackage[utf8]{inputenc}
\usepackage{amssymb, graphicx, amsthm, amsmath, mathtools, fancyhdr, lastpage,verbatim}
\usepackage{mathrsfs}
\usepackage{svg, mathtools, extarrows, afterpage, booktabs, longtable, float, multirow, dashbox, xurl, hyperref, diagbox, subcaption, systeme, hyperref}
\usepackage{xcolor}
\usepackage[margin = 1 in]{geometry}
\usepackage[scr]{rsfso}
\usepackage[T1]{fontenc}
\usepackage[inline]{enumitem}
\usepackage[utf8]{inputenc}
\usepackage[backend=biber,style=alphabetic]{biblatex}
\usepackage[normalem]{ulem}
\svgsetup{inkscapelatex = false}
\hypersetup{colorlinks = true, urlcolor = blue, linkcolor = black, citecolor = black}
\bibliography{Bibliography}

\theoremstyle{definition}
\newtheorem{definition}{Definition}[section]
\theoremstyle{definition}

\theoremstyle{plain}
\newtheorem{theorem}[definition]{Theorem}
\theoremstyle{plain}
\newtheorem{assumption}[definition]{Assumption}
\theoremstyle{plain}
\newtheorem{lemma}[definition]{Lemma}
\theoremstyle{plain}
\newtheorem{remark}[definition]{Remark}
\theoremstyle{definition}

\theoremstyle{definition}

\theoremstyle{plain}
\newtheorem{corollary}[definition]{Corollary}
\theoremstyle{definition}
\newtheorem{proposition}[definition]{Proposition}
\theoremstyle{definition}

\allowdisplaybreaks[1]

\numberwithin{equation}{section}

\newcommand{\equi}{\Leftrightarrow}

\let\bg\gg

\makeatletter
\newcommand{\boldletter}[1]{%
  \expandafter\def\csname#1#1\endcsname{\mathbf{#1}}%
}
\@tfor\@letter:=abcdefghijklmnopqrstuvwxyzABCDEFGHIJKLMNOPQRSTUVWXYZ0123456789\do{%
  \expandafter\boldletter\expandafter{\@letter}%
}

\newcommand{\scriptletter}[1]{%
  \expandafter\def\csname#1#1#1\endcsname{\mathscr{#1}}%
}
\@tfor\@letter:=ABCDEFGHIJKLMNOPQRSTUVWXYZ\do{%
  \expandafter\scriptletter\expandafter{\@letter}%
}
\makeatother

\DeclareMathOperator{\di}{d\!}
\DeclareMathOperator{\dee}{d}
\DeclarePairedDelimiter\abs{\lvert}{\rvert}
\DeclarePairedDelimiter\bigabs{\big\lvert}{\big\rvert}

\DeclarePairedDelimiter\Bigabs{\Big\lvert}{\Big\rvert}

\DeclarePairedDelimiter\inner{\langle}{\rangle}
\DeclarePairedDelimiter\biginner{\big\langle}{\big\rangle}

\DeclarePairedDelimiter\Biginner{\Big\langle}{\Big\rangle}

\DeclarePairedDelimiter\lnorm{\lVert}{\rVert}

\DeclarePairedDelimiter\parens{(}{)}
\DeclarePairedDelimiter\bigparens{\big(}{\big)}

\DeclarePairedDelimiter\Bigparens{\Big(}{\Big)}
\DeclarePairedDelimiter\Biggparens{\Bigg(}{\Bigg)}
\DeclarePairedDelimiter\sqbr{[}{]}
\DeclarePairedDelimiter\bigsqbr{\big[}{\big]}

\DeclarePairedDelimiter\Bigsqbr{\Big[}{\Big]}

\DeclarePairedDelimiter\curlbr{\{}{\}}

\DeclareMathOperator{\Law}{Law}
\DeclareMathOperator{\bigO}{O}

\DeclareMathOperator{\smallo}{o}

\newcommand{\nat}{\mathbb{N}}
\newcommand{\borel}{\mathscr{B}}

\newcommand{\trace}{\mathrm{tr}}

\newcommand{\iid}{independent and identically distributed }

\newcommand{\e}{\mathrm{e}}

\newcommand{\norm}{\mathscr{N}}
\newcommand{\prob}{\mathbb{P}}
\newcommand{\expc}{\mathbb{E}}
\newcommand{\ind}{\text{\usefont{U}{bbold}{m}{n}1}}

\newcommand{\expo}{\operatorname{Exp}}

\newcommand{\pois}{\operatorname{Pois}}

\newcommand{\real}{\mathbb{R}}
\newcommand{\realn}[1]{\mathbb{R}^{#1}}

\newcommand{\sspa}{\text{ }}
\newcommand{\rational}{\mathbb{Q}}

\newcommand{\eqd}{\overset{\mathrm{d}}{=}}
\newcommand{\eqas}{\overset{\mathrm{a.s.}}{=}}

\newcommand*{\tran}{^{\mkern-1.5mu\mathsf{T}}}

\newcommand{\casinf}[1]{\xrightarrow[\sspa {#1}\to\infty \sspa]{\mathrm{a.s.}}}

\newcommand{\cpinf}[1]{\xrightarrow[\sspa {#1}\to\infty \sspa]{\prob}}

\newcommand{\cdinf}[1]{\xrightarrow[\sspa {#1}\to\infty \sspa]{\mathrm{d}}}

\newcommand{\toinf}[1]{\xrightarrow{\sspa {#1}\to\infty \sspa}}

\renewcommand{\dfrac}{\displaystyle \frac}

\newcommand{\dsum}{\displaystyle\sum}

\newcommand{\dsqrt}{\displaystyle\sqrt}

\newcommand{\ols}[1]{\mskip.5\thinmuskip\overline{\mskip-.5\thinmuskip {#1} \mskip-.5\thinmuskip}\mskip.5\thinmuskip}
\newcommand{\olsi}[1]{\,\overline{\!{#1}}}
\makeatletter
\newcommand\closure[1]{
  \tctestifnum{\count@stringtoks{#1}>1}
  {\ols{#1}}
  {\olsi{#1}}
}
\long\def\count@stringtoks#1{\tc@earg\count@toks{\string#1}}
\long\def\count@toks#1{\the\numexpr-1\count@@toks#1.\tc@endcnt}
\long\def\count@@toks#1#2\tc@endcnt{+1\tc@ifempty{#2}{\relax}{\count@@toks#2\tc@endcnt}}
\def\tc@ifempty#1{\tc@testxifx{\expandafter\relax\detokenize{#1}\relax}}
\long\def\tc@earg#1#2{\expandafter#1\expandafter{#2}}
\long\def\tctestifnum#1{\tctestifcon{\ifnum#1\relax}}
\long\def\tctestifcon#1{#1\expandafter\tc@exfirst\else\expandafter\tc@exsecond\fi}
\long\def\tc@testxifx{\tc@earg\tctestifx}
\long\def\tctestifx#1{\tctestifcon{\ifx#1}}
\long\def\tc@exfirst#1#2{#1}
\long\def\tc@exsecond#1#2{#2}
\makeatother

\makeatletter

\title{Flocking with Multiple Types: Competition, Fluid Limits and Traveling Waves}

\begin{document}
\author{Sayan Banerjee and Andrew Nguyen}

\maketitle



\begin{abstract}
    We study a class of interacting particle systems on $\mathbb{R}$ with two types. Particles evolve by independent jumps sampled from a fixed distribution, with type-dependent jump rates $v_+$ and $v_-$, and stochastic type switching driven by non-local order-based interactions. The switching rates depend on the empirical distribution through the proportion of opposite-type particles located ahead, leading to a nonlinear and discontinuous dependence on the empirical measure outside the standard Lipschitz McKean-Vlasov framework. Our first main result is a law of large numbers for the empirical measure process: we prove convergence, along subsequences, to a deterministic measure-valued process characterized by a McKean-Vlasov equation. The proof combines tightness in Wasserstein space with a martingale characterization of limit points. A uniqueness argument based on a Kolmogorov-Smirnov-type distance adapted to the ordering structure yields convergence of the full empirical measure sequence and, in turn, propagation of chaos on finite time intervals. We then study the long-time behavior of the limiting dynamics. Because the system has persistent drift, invariant distributions do not arise; instead, we analyze traveling waves, corresponding to stationary profiles in a moving frame. For exponential jump distributions, the associated non-local integro-differential system admits a local description. In the regime $v_+>v_-=0$, this further reduces to a coupled system of non-linear ODEs, allowing a phase-plane analysis that yields a traveling wave as a heteroclinic orbit connecting two equilibria. We also identify the wave speed and mass partition, and derive tail asymptotics by spectral analysis of the linearized system.
    \vspace{0.15in}

    \noindent \textbf{AMS 2020 subject classifications:} 60K35, 65C35, 82C22, 46N30.
    \vspace{0.15in}

    \noindent \textbf{Keywords:} Interacting particle systems, multi-type systems, attractive interaction, mean-field limits, topological interactions, flocking, long-time behavior, traveling waves, heteroclinic orbit, linearized system, asymptotic speed, mass partition.
\end{abstract}

\section{Introduction}
Understanding the emergence of collective motion and spatial structure in large interacting particle systems is a central theme across probability, statistical physics, and applied fields such as biology, social dynamics, and distributed systems. Classical models of flocking and alignment—such as Cucker--Smale dynamics \cite{cucker2007emergent}, interacting diffusions, and kinetic models (see references later)—typically assume a \emph{homogeneous population} with smooth mean-field interactions. In contrast, many real-world systems involve \emph{heterogeneous populations} with competing types, whose interactions depend on spatial ordering and non-local effects.

In this paper, we introduce and analyze a class of multi-type interacting particle systems on $\mathbb{R}$, where each particle carries a type $\sigma \in \{+,-\}$ and evolves through jump dynamics combined with stochastic type switching. The jump dynamics are independent across particles, with type-dependent rates $v_+$ and $v_-$, and jump distribution $J$. The key feature of the model is the \emph{non-local mean-field interaction through spatial ordering}: the rate at which a particle changes type depends on the proportion of particles of the opposite type located ahead of it. More precisely, by letting $\mu_n(t)$ denote the empirical measure of the $n$-particle system at time $t$ on the space $\mathbb{R} \times \{+,-\}$ (encoding particle spatial locations and types), the rate at which a particle of type $\pm$ located at $x$ changes to $\mp$ is given by
\begin{equation}\label{alpha_intro}
\alpha^\pm(x,\mu_n(t)) = \alpha^\pm \mp \varphi\big(\mu_n^{\mp}((x,\infty))\big),
\end{equation}
where $\varphi$ is a bounded and Lipschitz function, $\alpha^{\pm}$ are the `intrinsic' rates, and $\mu_n^{\pm}(A) := \mu_n(A \times \{\pm\})$ for $A \in \borel(\real)$. This interaction mechanism models the \emph{competition} between the two types. Thinking of $+$ as the `fitter' type with $v_+>v_-$ (although this assumption is not required in our mean-field analysis), having too many $-$ types ahead of a particle of type $+$ reduces its rate of `slowing down' (changing to $-$), and having too many $+$ types ahead of a $-$ type particle motivates it to switch to the fitter $+$ type at a higher rate. The goal of this article is to study the large-scale and long-time behavior of this system. We analyze the large-scale behavior through mean-field McKean--Vlasov type limits, and the long-time behavior by exhibiting traveling wave solutions to the limiting equation, which characterize persistent shapes associated with `flock formation.'

At a technical level, the above interaction introduces a \emph{nonlinear} and \emph{discontinuous dependence} on the empirical measure, since the map
\begin{align*}
    \nu \mapsto \int \ind_{\{z>x\}}  \nu(\di z)
\end{align*}
is \emph{not necessarily continuous} in the Wasserstein topology. As a result, our model lies outside the standard framework of Lipschitz McKean--Vlasov dynamics, requiring new techniques to analyze both the mean-field limit and long-time behavior.

\subsection{Related work}
Our model is closely related in spirit to the class of \emph{two-velocity Boltzmann-type models} studied in the literature on agent-based flocking, in particular the work of \cite{hongler2014local}. In that work, particles move along continuous trajectories with velocities that switch between $v_+$ and $v_-$ at rates that depend on the surrounding population through a \emph{non-local observation kernel} (along with additional barycentric interactions). In particular, when the observation kernel is supported on the entire particle configuration ahead of the given particle (and with no barycentric interaction), we obtain the \emph{follow-all-the-leaders} model where the velocity switching rates are given by $\alpha^{\pm}(\cdot,\cdot)$ as in \eqref{alpha_intro} with $\varphi$ being the identity map. This model is explicitly solvable owing to its reduction to a Ruijgrok–Wu type system (see \cite{ruijgrok1982completely}) that can be linearized via a logarithmic transform which, in particular, also gives rise to explicit traveling wave solutions. In comparison, our model is of a pure-jump type, which adds another level of non-locality to the system, that leads to more involved large-scale and long-time behavior. In particular, our model is not explicitly solvable, which makes the analysis of traveling wave solutions delicate.

There are several other works on follow-the-leader type systems, both in terms of mean-field and nearest-neighbor type interactions \cite{gazis1961nonlinear,ben2007toy,holden2017follow,ridder2018traveling,banerjee2026long}. Among the few mathematical works on \emph{pure-jump mean-field models for flocking}, we mention \cite{BRT14,BBI24}, whose approach outline is closely followed in proving the McKean--Vlasov limit for our system. A closely related (single-type) particle system was studied in \cite{greenberg1996asynchronous,stolyar2023particle,stolyar2023large,baryshnikov2025large}, motivated by applications in \emph{distributed parallel simulation}. In this model, each particle in the $n$-particle system jumps forward by a random amount, independently sampled from a common distribution $\theta$, with rate given by a non-increasing function of its \emph{quantile} in the empirical distribution of the system.

More generally, the study of particle systems with \emph{topological interactions}, where a particle's motion depends on its relative position in the system, is an active and rapidly growing area. This includes \emph{rank-based diffusions} \cite{palpit, sar, sartsa, DJO,banerjee2022domains}, particle systems motivated by \emph{evolutionary biology} \cite{bruder, durrem}, and \emph{Gaussian pursuit-evasion} models \cite{banerjee2016brownian}, to name a few.

\subsection{McKean--Vlasov Limit}
 Our first objective is to study the mean-field (fluid) limit of the measure-valued process $\mu_n(\cdot)$ as $n$ tends to infinity over compact time intervals $[0,T]$ for any $T > 0$. Formally, one expects convergence to a deterministic measure-valued process $\mu(\cdot)$ satisfying the McKean--Vlasov equation (MVE)
 \begin{equation}\label{eq:MVE_intro}
 \langle f, \mu(t)\rangle=\langle f, \mu(0)\rangle+\int_0^t \langle \LLL f(\cdot,\mu(s)), \mu(s)\rangle \di s,
 \end{equation}
 for all test functions $f$ in a suitable class, where we denote $\langle f, \nu\rangle := \int_{\mathbb{R} \times \{+,-\}} f(\mathbf{z})\nu(\di \mathbf{z})$, and
 \begin{align*}
    \LLL f(x, \pm, \nu) := v_\pm \expc_Z\bigsqbr{f(x+Z, \pm) - f(x, \pm)} + \alpha^\pm(x, \nu) \bigparens{f(x, \mp) - f(x, \pm)},
\end{align*}
where $\alpha^\pm(\cdot,\cdot)$ is defined in \eqref{alpha_intro} with $\mu_n(t)$ replaced by $\nu$.
McKean--Vlasov limits and associated propagation of chaos results are classical (see, e.g., Sznitman, Graham--Méléard, and subsequent developments) and we follow the broad approach in \cite{BRT14,BBI24}. However, the discontinuous interaction structure discussed above prevents the direct use of standard Wasserstein contraction arguments. 
We outline the main framework used here:
\begin{itemize}[leftmargin=*]
\item \emph{Tightness:} Wasserstein tightness of the measure-valued process $\mu_n(\cdot)$ is established via equicontinuity estimates, implying existence of subsequential limits;
\item \emph{Characterizing subsequential limits:} Subsequential limits are identified as solutions to the MVE \eqref{eq:MVE_intro}, which requires \emph{`a priori' continuity estimates} for $\mu_n(\cdot)$ (see Lemma \ref{uniformLemma}) to circumvent the discontinuity described above;
\item \emph{Uniqueness:} Uniqueness of solutions to the MVE from a given initial condition is established via a Kolmogorov--Smirnov type distance adapted to the ordering structure. This `metric change' from Wasserstein to Kolmogorov--Smirnov is again due to the discontinuity which makes the associated Gr\"onwall-type estimates hard to obtain purely in Wasserstein distance.
\end{itemize}
Theorem \ref{fluidTheorem} records our main result in this vein, while
Corollary \ref{POC} gives a \emph{propagation-of-chaos} result, which (roughly) states that the laws of the particle trajectories started from independent and identically distributed (i.i.d.) initial locations remain close to i.i.d. for large $n$ over compact time intervals.

\subsection{Traveling Waves and Long-Time Behavior}
Our next goal is to study the long-time behavior of solutions $\mu(t)$ to \eqref{eq:MVE_intro}, the MVE. To make the analysis somewhat simpler, we assume throughout that $\varphi$ is the identity map, although our techniques have natural extensions to the more general case. As the whole system drifts rightward, $\mu(t)$ does not converge to a stationary distribution. Instead, we study the emergence of \emph{traveling wave solutions}, corresponding to stationary profiles in a moving frame. A traveling wave is a pair of cumulative distribution functions $H=(H_+,H_-)$ and a speed $\gamma>0$ such that\[\mu^\pm(t,(-\infty,x]) = \rho^\pm H_\pm(x - \gamma t),\]for some mass partition $(\rho^+,\rho^-)$ with $\rho^+ + \rho^- = 1$. Substituting into \eqref{eq:MVE_intro}, one obtains a system of coupled non-local integro-differential equations for $H_\pm$ (see Lemma \ref{integrodiff}). The analysis of these equations is challenging due to  their non-local structure, the coupling between types, and the absence of a priori monotonicity or convexity in the dynamics. This places our system outside the realm of standard tools for obtaining traveling wave solutions (e.g., those used for Fisher--KPP equations \cite{an1937etude,baryshnikov2025large}), making its long-time analysis highly involved.

To make the problem more tractable, we restrict attention to exponential jump distributions. This choice leads to certain structural simplifications that allow reduction to a local dynamical system of coupled second-order ordinary differential equations (ODEs). We then carry out a detailed phase-space analysis of this system, including the identification of its equilibria and the study of the spectra and eigenspaces of the corresponding linearizations. The main steps of the analysis are outlined below:
\begin{itemize}
    \item \emph{Characterizing mass-partition and wave speed:} For an exponential jump distribution, traveling wave solutions, if they exist, admit explicit formulas for the mass partition $(\rho^+,\rho^-)$ and wave speed $\gamma$ (Theorem \ref{travelwave}(a)).
    \item \emph{Dynamical systems reduction:} When $v_+> v_-=0$, the traveling wave equations further simplify to a coupled pair of non-linear first order ODEs. This enables phase-space analysis and identification of a heteroclinic orbit connecting equilibria, furnishing existence and characterization of traveling waves (Theorem \ref{travelwave}\ref{b0}).
    \item \emph{Tail asymptotics:} The left and right tail exponents of the traveling wave are explicitly computed using spectral analysis of the associated linearized system near the two equilibria (Theorem \ref{travelwave}\ref{b0}\ref{b2}--\ref{b0}\ref{b3}).
\end{itemize}

Here, we would like to note that, beyond the specific choices of exponential jump distributions and $v_-=0$ made above, the traveling wave analysis holds for \emph{all values} of the parameters $\alpha^{\pm}$ and $v_+>0$, and the wave tail exponents exhibit a rich complex algebraic dependence on these parameters.

\textbf{Qualitative phenomena.} Our analysis reveals several non-trivial features of the traveling wave, some of which we highlight below.
\begin{itemize}
\item Faster particles (of type $+$) accumulate toward the \emph{back} of the wave, driving the `flock' forward, despite being the fitter species (see Theorem \ref{travelwave}\ref{b0}\ref{b1}).
\item The wave speed depends nonlinearly on the competition parameters ($\alpha^+$ and $\alpha^-$), and linearly on the type-specific speeds ($v_+$ and $v_-$), in an explicit way.
\item Both types share the same wave speed and identical tail exponents, indicating strong mixing and absence of spatial segregation.
\end{itemize}

Although the wave speed and tail exponents are algebraically involved, some simplifications are achieved when $\alpha^+ \bg \alpha^-$ or $\alpha^- \bg \alpha^+$.
Corollary \ref{tailclean} and Remark \ref{twalpharem} describe these results and their qualitative implications. Figure \ref{fig:TWplots} provides visualizations of some traveling waves.

\medskip
Overall, our work provides a new class of \emph{multi-type} interacting particle systems with \emph{non-smooth mean-field interactions}, for which both the fluid limit and long-time behavior can be rigorously characterized. The combination of discontinuous interactions, multi-type competition, and traveling wave phenomena places this model at the intersection of several active areas in probability and statistical physics.

\subsection{Future work}

In the present article, we obtained the MVE as a mean-field limit of the particle empirical measure over compact time intervals, and then studied the long-time behavior of the limiting MVE as a proxy for the long-time behavior of the particle system for finite large $n$. To formally make this connection, the next step in our program will be to obtain \emph{interchange of limits, and long-time and uniform-in-time propagation of chaos} results for the particle system. This will help control the discrepancy between the pre-limiting empirical measure $\mu_n(t)$ and the McKean-Vlasov limit $\mu(t)$ over larger time horizons (possibly depending on $n$), and possibly over all time, to make rigorous connections between the long-time behavior of $\mu_n(t)$ and $\mu(t)$ for large $n$. We will further study generalizations of our system with more than two (possibly infinite) types, and with jump distributions having heavier tails.

In future work, we also aim to extend our traveling wave analysis to more general jump distributions and to the case $v_- \neq 0$. This extension is expected to require new ideas and to reveal qualitatively new phenomena.

\section{Problem Setup}
\label{Intro}
Consider a two-type population of $n$ particles, where each particle is of either type $+$ or type $-$. Particles of type $+$ (respectively, $-$) jump at rate $v_+$ (respectively, $v_-$), where $v_+$ and $v_-$ are non-negative real numbers, with jumps independent and identically distributed, sampled from a distribution $J$. Particles are ``tagged'' by their initial locations. Denote the trajectory/path of the $i^\text{th}$ particle using $\{x^n_i(t)\!: t\!\geq\!0\}$, and the type of the same particle using $\{\sigma^n_i(t)\!: t\geq 0\}$. For $t\geq 0$, let
  \begin{align*}
    \xx^n(t) := \begin{bmatrix} x^n_1(t) & x^n_2(t) & \dots & x^n_n(t) \end{bmatrix}\tran & & \text{and} & & \pmb{\sigma}^n(t) := {\begin{bmatrix} \sigma^n_1(t) & \sigma^n_2(t) & \dots & \sigma^n_n(t) \end{bmatrix}}\tran
\end{align*}
denote the vectors containing the locations (and types, respectively) of all particles at time $t$. Lastly, for each $t\geq 0$, let $\XX^n_i(t) := \begin{bmatrix} x^n_i(t) & \sigma^n_i(t) \end{bmatrix}$, and let
\begin{align*}
    \XX^n(t) := \begin{bmatrix} \xx^n(t) & \pmb{\sigma}^n(t)  \end{bmatrix}
\end{align*}
capture the ``state'' of the whole system at time $t$. For shorthand, define $[n] := \{1, 2, \dots, n\}$.

The type of particle $i$ may change to the opposite type at a rate, given as follows:
\begin{itemize}
    \item  To specify the flip rates at configurations containing spatial ties, fix the following deterministic tie-breaking convention. For any $\xx\in\realn{n}$ and $(i, j)\in [n]^2$, write $j \succ_\xx i$ if and only if either $x_j > x_i$, or $x_j = x_i$ and $j > i$. That is,
    \begin{align*}
        \ind_{\{j \succ_\xx i\}} = \ind_{\{x_j > x_i\}} + \ind_{\{x_j=x_i\}} \ind_{\{j>i\}}.
    \end{align*}
    For each $n\in\nat$, constants $\alpha^+ > 0$, $\alpha^- > 0$, $\xx\in\real^n$, $\pmb{\sigma}\in\{+,-\}^n$, and a propensity function $\varphi\!: [0, 1]\to\real^+$ (all of which will be specified later), define
    \begin{align*}
        \alpha^{\sigma_i = +}_n(\xx,\pmb{\sigma}) := \alpha^+ - \varphi\parens*{\dfrac{1}{n} \dsum_{j=1}^n \ind_{\{j \succ_\xx i\}} \ind_{\{\sigma_j = -\}}}
    \end{align*}
    and
    \begin{align*}
        \alpha^{\sigma_i = -}_n(\xx,\pmb{\sigma}) := \alpha^- + \varphi\parens*{\dfrac{1}{n} \dsum_{j=1}^n \ind_{\{j \succ_\xx i\}} \ind_{\{\sigma_j = +\}}}.
    \end{align*}

    \item  If $\sigma^n_i(t^-) = +$ (that is, if just before time $t$, particle $i$ is of type $+$), then the rate of change to type $-$ at time $t$ is given by $\alpha^{\sigma_i = +}_{n}\big(\xx^n(t^-), \pmb{\sigma}^n(t^-)\big)$. Conversely, if $\sigma^n_i(t^-) = -$, then the corresponding rate of type change is $\alpha^{\sigma_i = -}_{n}\big(\xx^n(t^-), \pmb{\sigma}^n(t^-)\big)$.
\end{itemize}
We assume $\varphi\!: [0, 1]\to\real^+$ is bounded and $1$-Lipschitz; that is, $\lnorm*{\varphi}_\infty := \sup_{x\in [0, 1]}\varphi(x) < \infty$, and for any $x\in[0, 1]$ and $y\in [0, 1]$, $\abs*{\varphi(x) - \varphi(y)} \leq |x-y|$. Also, so that $\alpha^+_{n}$ makes sense, we necessarily have $\lnorm*{\varphi}_\infty \leq \alpha^+$. In addition, in order to facilitate our discussions about martingales and Markov processes later on, let us define the ``natural'' filtration $\curlbr*{\FFF(t)\!: t\geq 0}$, where, for each $t\geq 0$, $\FFF(t) := \sigma\curlbr*{\XX^n(s)\!: s\in [0, t]}$.

For $t\geq 0$, we capture the particle locations and types at time $t$ by the empirical measure
\begin{align*}
    \mu_n(t) := \dfrac{1}{n} \dsum_{j=1}^n \delta_{\XX^n_j(t)},
\end{align*}
and encode the particle locations for a given type by
\begin{align*}
    \mu^\pm_n(t) := \dfrac{1}{n} \dsum_{j=1}^n \delta_{\XX^n_j(t)} \ind_{\curlbr*{\sigma^n_j(t) = \pm}}.
\end{align*}
Note that $\mu^+_n(t) + \mu^-_n(t) = \mu_n(t)$; in addition, $\mu^+_n(\cdot)$ and $\mu^-_n(\cdot)$ are RCLL processes, and $\mu_n(\cdot) \in \DDD\big([0, \infty), \PPP_1(\real \times \{+,-\})\big)$. For convenience, for each $n\in\nat$ and $A\in\borel(\real)$, we also define the spatial marginals of $\mu_n$ and $\mu^\pm_n$ by
\begin{align*}
    \mu^\circ_n(t, A) := \mu_n\bigparens{t, A\times \{+,-\}} & & \text{and} & & \mu^\pm_n(t, A) := \mu_n\bigparens{t, A\times \{\pm\}},
\end{align*}
respectively.

Next, let us consider the \textbf{generator} of this process. For any (suitable) function $\ff:\real^n \times \curlbr*{+, -}^n\rightarrow\real$, define
\begin{align*}
    \pmb{\LLL} \ff(\xx, \pmb{\sigma}) := \dsum_{i=1}^n v_{\sigma_i} \expc_Z\bigsqbr{\ff(\xx + \ee_i Z, \pmb{\sigma}) - \ff(\xx, \pmb{\sigma})} + \dsum_{i=1}^n \alpha^{\sigma_i}_n(\xx, \pmb{\sigma}) \Bigparens{\ff\big(\xx, t_i(\pmb{\sigma})\big) - \ff(\xx, \pmb{\sigma})},
\end{align*}
where  $\ee_i$ is the $i^\text{th}$ vector in the standard basis of $\real^n$,  and for $\pmb{\sigma} \in \{+,-\}^n$, $t_i$ ``flips'' the type of the $i^\text{th}$ coordinate of $\pmb{\sigma}$; that is,
\begin{align*}
    t_i(\pmb{\sigma}) =
    \setlength{\nulldelimiterspace}{0pt}
    \left\{
    \begin{aligned}
        &\parens*{\sigma_1,\dots,\sigma_{i-1},-,\sigma_{i+1},\dots,\sigma_n}, && \text{if }\sigma_i=+ \\
        &\parens*{\sigma_1,\dots,\sigma_{i-1},+,\sigma_{i+1},\dots,\sigma_n}, && \text{if }\sigma_i=-
    \end{aligned}
    \right..
\end{align*}
Similarly, we define $t\!: \{+,-\} \to \{+,-\}$ as
\begin{align*}
    t(\sigma) = \setlength{\nulldelimiterspace}{0pt}
    \left\{
    \begin{aligned}
        &-, && \text{if }\sigma = + \\
        &+, && \text{if }\sigma = -
    \end{aligned}
    \right..
\end{align*}
For reasons related to the $1$-Wasserstein metric, we choose $\HHH$ to be the collection of $1$-Lipschitz functions from $\real\times \{+,-\}$ to $\real$, and we say that $f\!: \real\times \{+,-\} \to \real$ is $1$-Lipschitz if, for any
$(x_1, \sigma_1) \in  \real\times \{+,-\}$ and $(x_2, \sigma_2) \in  \real\times \{+,-\}$,
\begin{align*}
    \abs*{f(x_1, \sigma_1) - f(x_2, \sigma_2)} \leq |x_1 - x_2| + \ind_{\{\sigma_1 \neq \sigma_2\}}.
\end{align*}
Now, let us define a ``projected'' version of the generator. For $f\in \HHH$, define
\begin{align*}
    \LLL f\!: \real\times \{+,-\} \times \PPP_1(\real \times \{+,-\}) \to \real
\end{align*}
by
\begin{align*}
    \LLL f(x, \pm, \nu) := v_\pm \expc_Z\bigsqbr{f(x+Z, \pm) - f(x, \pm)} + \alpha^\pm(x, \nu) \bigparens{f(x, \mp) - f(x, \pm)}.
\end{align*}
 We will investigate conditions under which
\begin{align*}
    \mu_n(\cdot) \cdinf{n} \mu(\cdot)
\end{align*}
in $\DDD\big([0, \infty), \PPP_1(\real\times \{+,-\})\big)$ (possibly along a subsequence), where $\mu(\cdot)$ is a solution to the \emph{McKean--Vlasov equation} (MVE)
\begin{align}
\label{originalMVE}
    A_{t,f}\bigparens{\mu(\cdot)} = 0,
\end{align}
with initial condition $\nu$, and for $t \geq 0$ and $f\in\HHH$, $A_{t,f}\bigparens{\mu(\cdot)}$ is defined to be
\begin{align*}
    A_{t,f}\bigparens{\mu(\cdot)} &:= \biginner{f, \mu(t)} - \biginner{f, \mu(0)} - \int_0^t \int_{\real\times \{+,-\}}\LLL f\bigparens{\mathbf{x}, \mu(s)} \mu(s,\dee \mathbf{x}) \di s \\
    &= \biginner{f, \mu(t)} - \biginner{f, \mu(0)} - \int_0^t \Biginner{\LLL f\bigparens{\cdot, \mu(s)}, \mu(s)} \di s.
\end{align*}
We will also investigate \emph{traveling wave solutions} to \eqref{originalMVE}, which are essentially stationary solutions in a moving reference frame, to quantify stable shapes of the flock formed by the moving particles over time. The functional $\alpha^{\pm}(\cdot,\cdot)$, where $\alpha^\pm(x, \nu) := \alpha^\pm \mp \varphi\parens*{\int \ind_{\{z>x\}} \nu^{\mp}(\di \zz)}$, and $\nu^{\pm}(A) := \nu\bigparens{A\times\{\pm\}}$ for $A \in \borel(\real)$, makes the analysis challenging by introducing non-linearity in the measure-valued dynamics and possible  discontinuities as the map $\nu \mapsto \int\limits_{\{z>x\}}\nu^{\mp}(\di \zz)$, for any given $x$, is not necessarily continuous on $\PPP_1(\real \times \{+,-\})$.  Also, for convenience,  we will abbreviate
  \begin{align}
  \label{alphandef}
    \alpha^\pm_n(t, x) := \alpha^\pm\bigparens{x, \mu_n(t)} = \alpha^\pm \mp \varphi\parens*{\dfrac{1}{n} \dsum_{j=1}^n \ind_{\curlbr*{x^n_j(t) > x}} \ind_{\{\sigma^n_j(t)=\mp\}}}.
\end{align}
 \section{Main Results}
\label{sec2}
\setcounter{definition}{-1}
\subsection{The McKean--Vlasov Fluid Limit}
Before stating our result, we shall specify some standing conditions for the particle system. First, regarding the distribution of the jump steps, let us make the following assumption:
\begin{assumption}
    \label{assumption0}
    For $Z\sim J$, $\expc(Z) = 1$, and $\expc\parens*{Z\displaystyle^2} < \infty$.
\end{assumption}
Next, we make an assumption regarding the initial profiles of the particles:
\begin{assumption}
    \label{assumption2}
    For any $\eta > 0$,
      \begin{align}
        \lim_{L\to\infty} \limsup_{n\to\infty} \prob\parens*{\int \abs*{x} \ind_{\curlbr*{|x| \geq L}} \mu^\circ_n(0, \di x) \geq \eta} = \lim_{L\to\infty} \limsup_{n\to\infty} \prob\parens*{\dfrac{1}{n}\dsum_{i=1}^n |x^n_i(0)| \ind_{\curlbr*{|x^n_i(0)| \geq L}} \geq \eta} = 0.
    \end{align}
\end{assumption}
\begin{remark}
    The following is a helpful consequence of Assumption \ref{assumption2}:
      \begin{align}
    \label{moreTightAssumption}
        \lim_{L\to\infty} \limsup_{n\to\infty} \prob\parens*{\dfrac{1}{n} \dsum_{i=1}^n \abs*{x^n_i(0)} > L} = 0.
    \end{align}
\end{remark}
In addition, we make two other assumptions, one about the initial profile of the particles, and the other about the distribution of the jump steps:
 \begin{assumption}
    \label{assumption3}
      Suppose that for every $\varepsilon > 0$,
    \begin{align*}
        \lim_{\eta\to 0} \limsup_{n\to\infty} \prob\parens*{\sup_{x\in\real} \mu^\circ_n\bigparens{0, (x-\eta, x+\eta)} > \varepsilon} = 0.
    \end{align*}
    Furthermore, suppose that the initial spatial configuration is almost-surely tie-free; that is,
    \begin{align*}
        \prob\bigparens{x^n_i(0) = x^n_j(0) \text{ for some } i\neq j} = 0.
    \end{align*}
 \end{assumption}

\begin{remark}
In particular, the first statement of Assumption \ref{assumption3} holds if the initial locations $\{x_i^n(0)\!: i\in[n]\}$ are independent, with cumulative distribution functions $F_{n,i}$, and the family $\curlbr*{F_{n,i}:n\in\nat,\ i\in[n]}$ is uniformly equicontinuous, in the sense that 
\begin{align*}
    \lim_{\eta\downarrow0} \sup_{n\in\nat}\max_{i\in[n]}\sup_{x\in\real} \curlbr*{F_{n,i}(x+\eta) - F_{n,i}(x-\eta)} = 0.
\end{align*}
The second statement holds, in particular, if, for each $n\in\nat$, $\{x_i^n(0):i\in[n]\}$ are \iid with a common continuous cumulative distribution function $F_n$, and the sequence $\{F_n\!: n\in\nat\}$ converges weakly to a continuous cumulative distribution function $F$.
\end{remark}
\begin{assumption}
\label{assumption4}
    The jump length distribution $J$ admits a bounded density $\phi$ on $[0, \infty)$.
\end{assumption}
  Next, define the relevant class of solutions for the McKean--Vlasov equation \eqref{originalMVE} by
\begin{align*}
    \TTT &:= \curlbr*{\mu \in \CCC\bigparens{[0, \infty), \PPP_1(\real \times \{+,-\})}\!: \text{for every } T > 0, \sspa \lim_{\eta\to 0}\sup_{s\in[0,T]} \sup_{x\in\real} \mu^\circ\bigparens{s, (x-\eta, x+\eta)} = 0}.
\end{align*}
Now we state our first main result.
 \begin{theorem}
    \label{fluidTheorem}
    We have the following:
    \begin{enumerate}[label = (\alph*)]
        \item \label{31a} Suppose that Assumptions \ref{assumption0} and \ref{assumption2} hold. Then the sequence $\{\mu_n(\cdot)\!: n\in\nat\}$ is $\mathscr{C}$-tight in the space $\mathscr{D}\big([0, \infty), \mathscr{P}_1(\real\times \{+,-\})\big)$.

        \item \label{31b} In addition, suppose that Assumptions \ref{assumption3} and \ref{assumption4} hold. Then any subsequential weak limit point $\mu(\cdot)$ of $\{\mu_n(\cdot)\!: n\in\nat\}$ in $\DDD\big([0, \infty), \PPP_1(\real \times \{+, -\})\big)$ satisfies \eqref{originalMVE}, the McKean-Vlasov equation, for $t \ge 0$ and $f \in \HHH$. Moreover, almost surely,  $\mu \in \TTT$.

        \item \label{31c} Furthermore, if $\mu_n(0) \cpinf{n} \nu$ in $\PPP_1\bigparens{\real \times \{+, -\}}$, where $\nu$ is a deterministic measure, then $\mu_n(\cdot) \cpinf{n} \mu(\cdot)$ in $\DDD\big([0, \infty), \PPP_1(\real \times \{+, -\})\big)$, where $\mu(\cdot)$ is the unique and deterministic solution to \eqref{originalMVE} in $\TTT$ with initial condition $\mu(0) = \nu$.
    \end{enumerate}
\end{theorem}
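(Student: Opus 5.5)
We prove the three parts in order, following the outline of \cite{BRT14,BBI24}.

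\textbf{Part \ref{31a}.} For $\mathscr{C}$-tightness in $\DDD([0,\infty),\PPP_1(\real\times\{+,-\}))$ we verify two things. The first is compact containment, via uniform-in-$[0,T]$ control of first moments and of tails. Each particle's displacement on $[0,T]$ is stochastically dominated by a compound Poisson process with rate $\max(v_+,v_-)$ and jumps $Z\sim J$. Hence
\[\frac1n\sum_i \sup_{t\le T}|x_i^n(t)|\ind_{\{\cdot\ge L\}}\]
is controlled by Assumption \ref{assumption2} together with Doob/Markov bounds that use $\expc Z^2<\infty$. The relatively compact sets in $\PPP_1$ are those with uniformly integrable first moments, so this gives compact containment. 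The second is an equicontinuity (Aldous-type) estimate. For $1$-Lipschitz $f$,
\[W_1(\mu_n(t),\mu_n(s)) \le \frac1n\sum_i\big(|x_i^n(t)-x_i^n(s)|+\ind\{\text{type flips in }[s,t]\}\big).\]
Jumps occur at bounded rates and flips at rate at most $\alpha^++\alpha^-+\|\varphi\|_\infty$, so this quantity has expectation $O(|t-s|)$, with fluctuations of order $n^{-1/2}$. Individual jump sizes are $O(1/n)$ in $W_1$, and the maximal jump of $\mu_n$ tends to $0$ in probability because $\max_i Z_i/n\to0$ when $\expc Z^2<\infty$. Together these give $\mathscr{C}$-tightness.

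\textbf{Part \ref{31b}.} First we show $\mu\in\TTT$, using the a priori continuity estimate Lemma \ref{uniformLemma}. The mass of $\mu_n^\circ(t)$ in any interval $(x-\eta,x+\eta)$ is controlled, uniformly in $t\le T$, by two sources:
\begin{itemize}
\item the initial mass near $x$, handled by Assumption \ref{assumption3};
\item particles that jumped into the window, of order $v\,T\,\|\phi\|_\infty\,2\eta$ by Assumption \ref{assumption4}, plus a fluctuation term.
\end{itemize}
This passes to the limit by the portmanteau theorem on open intervals.

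Next we write the Dynkin martingale
\[M_n^f(t)=\langle f,\mu_n(t)\rangle-\langle f,\mu_n(0)\rangle-\int_0^t\langle\LLL f(\cdot,\mu_n(s)),\mu_n(s)\rangle\,ds,\]
using that $\pmb\LLL$ applied to $\langle f,\cdot\rangle$ equals the projected operator up to tie conventions. Ties have probability zero after time $0$ thanks to the density of $J$. Its quadratic variation is $O(1/n)$, so $M_n^f\to0$.

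The main obstacle is continuity of the map $\mu(\cdot)\mapsto A_{t,f}(\mu(\cdot))$ at limit points, because $\nu\mapsto\nu^\mp((x,\infty))$ is discontinuous. We resolve it as follows. At measures $\nu$ with no atoms, and with modulus $\sup_x\nu^\circ(x-\eta,x+\eta)$ small, weak convergence $\nu_k\to\nu$ implies uniform (Pólya-type) convergence of the distribution functions. The joint function $(x,\nu)\mapsto\LLL f(x,\pm,\nu)$ is therefore continuous along such sequences. To make this quantitative for $\mu_n$, we replace $\ind_{\{z>x\}}$ by a continuous approximation $g_\eta$. The resulting error is bounded by the local-mass modulus $\sup_x\mu_n^\circ(s,(x-\eta,x+\eta))$, which Lemma \ref{uniformLemma} makes uniformly small in probability. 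Using Skorokhod representation and $\mathscr{C}$-tightness, we then let $n\to\infty$ and afterwards $\eta\to0$, which gives $A_{t,f}(\mu)=0$ almost surely. We handle all $f$ simultaneously through a countable dense family together with continuity of $A_{t,f}$ in $f$.

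\textbf{Part \ref{31c}.} Given part \ref{31b}, it suffices to prove uniqueness in $\TTT$ of solutions of \eqref{originalMVE} with $\mu(0)=\nu$. Then all subsequential limits coincide with a deterministic limit, and convergence in law to a constant gives convergence in probability.

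For uniqueness, take two solutions $\mu,\tilde\mu$ and control
\[D(t)=\sum_\pm\sup_x|\mu^\pm(t,(x,\infty))-\tilde\mu^\pm(t,(x,\infty))|,\]
a Kolmogorov–Smirnov distance. Test \eqref{originalMVE} with indicators $\ind_{(x,\infty)\times\{\pm\}}$, which is justified by approximating with Lipschitz functions and using the no-atom property of $\TTT$. This gives
\[\frac{d}{dt}\mu^\pm(t,(x,\infty))=v_\pm\int\big(\bar J(x-y)-\ind_{\{y>x\}}\big)\mu^\pm(dy)+\text{flip terms},\]
where $\bar J$ is the tail of $J$. The jump term is a convolution-type linear functional whose difference is bounded by $C\,D(t)$ after integration by parts against the bounded-variation kernel $\bar J$. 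The flip terms involve $\int_{(x,\infty)}\alpha^\pm(y,\mu)\,\mu^\pm(dy)$. Their difference splits into $\|\alpha\|_\infty D(t)$ plus a Lipschitz-in-$\varphi$ term bounded by $D(t)$, after integrating by parts the monotone function $y\mapsto\varphi(\mu^\mp((y,\infty)))$. Thus $D(t)\le C\int_0^tD(s)\,ds$, and Grönwall gives $D\equiv0$. Since $D=0$ forces the measures to be equal, this yields uniqueness.
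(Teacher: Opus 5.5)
Your proposal is correct and follows the paper's overall architecture: $\CCC$-tightness from a Wasserstein modulus plus first-moment control, a vanishing Dynkin martingale, an a priori local-mass estimate to handle the discontinuous interaction, and a Kolmogorov--Smirnov Gr\"onwall argument for uniqueness. Two technical steps are done differently.

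\textbf{The local-mass estimate of Lemma~\ref{uniformLemma}.} You bound the mass entering a window through the compensator of the landing counts. Given the pre-jump position, a jump lands in an interval of length $2\eta$ with probability at most $2\eta\lnorm*{\phi}_\infty$, and jumps occur at bounded rate. The paper instead passes to the embedded walks $S_i(k)$. It uses renewal theory and Stone's decomposition (needing $J$ spread out with finite mean) to get a bounded renewal density, and then a conditional Hoeffding inequality.

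Your route is more elementary: it needs only $\lnorm*{\phi}_\infty<\infty$ and bounded jump rates, at the cost of a bound that grows linearly in $T$. However, the landing counts of different particles are coupled through the type dynamics, so your fluctuation term is naturally controlled by a martingale second-moment bound. With that bound, the union over windows must run over a grid in a fixed compact $[-K,K]$, with monotonicity of trajectories and tightness handling $|x|>K$. The paper's conditional independence of the $S_i$ gives exponential bounds, which tolerate an $n$-dependent grid on $[-n,n]$ and give uniformity over all $s\geq 0$. Your phrase ``plus a fluctuation term'' hides exactly this uniformity-in-$x$ step, and you should make it explicit.

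\textbf{The limit of the flip term.} You mollify $\ind_{\{z>x\}}$ into a Lipschitz $g_\eta$, or use P\'olya's theorem at the atomless limit, so that the interaction becomes $\WWW_1$-continuous. The paper keeps the discontinuous integrand and controls it with an optimal $\WWW_1$-coupling and Markov's inequality. Both rely on the same local-mass modulus.

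\textbf{A small correction in the uniqueness step.} $\varphi$ is only assumed $1$-Lipschitz, not monotone, so $y\mapsto\varphi\bigparens{\mu^\mp((y,\infty))}$ need not be monotone. What the integration by parts actually requires, and what the paper uses, is that this function is continuous (because the measures in $\TTT$ are atomless) with total variation at most $1$.
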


The following corollary establishes \emph{propagation of chaos} for finite times $t \geq 0$ when the particles are initiated at i.i.d. locations:
\begin{corollary}[Propagation of Chaos]
\label{POC}
    Suppose Assumptions \ref{assumption0} and \ref{assumption4} hold; in addition, suppose that $\curlbr*{\XX^n_i(0)\!: i\in[n]}$ are independent and identically distributed with some deterministic distribution $\Gamma\in\PPP_1(\real\times\{+,-\})$, whose spatial marginal $\Gamma^\circ$, given by $\Gamma^\circ(A) := \Gamma\bigl(A\times\{+,-\}\bigr)$ for $A\in\borel(\mathbb R)$, has a continuous cumulative distribution function.
    \begin{enumerate}[label = (\alph*)]
        \item There exists a unique $\mu\in\TTT$ such that it solves \eqref{originalMVE} with $\mu(0)=\Gamma$; in addition,   $\mu_n(\cdot)\cpinf{n}\mu(\cdot)$ in $\DDD\bigparens{[0,\infty),\PPP_1(\real\times\{+,-\})}$.

         \item  Furthermore, for any $t\geq0$, the sequence $\curlbr*{\Law\bigparens{\XX^n(t)}\!: n\in\nat}$ is $\mu(t)$-chaotic; that is, for any  $k\in\nat$ and any bounded   continuous function $\ff\!:(\real\times\{+,-\})^k\to\real$,
        \begin{align*}
            &\expc\sqbr*{\ff\bigparens{\XX^n_1(t),\XX^n_2(t),\dots,\XX^n_k(t)}} \\
            &\qquad\toinf{n}\int\limits_{(\real\times\{+,-\})^k}\ff(\xx_1,\xx_2,\dots,\xx_k)\mu(t,\di\xx_1)\mu(t,\di\xx_2)\dots\mu(t,\di\xx_k).
        \end{align*}
        Equivalently,   $\Law\bigparens{\XX^n_1(t),\XX^n_2(t),\dots,\XX^n_k(t)}\cdinf{n}\mu^{\otimes k}(t)$.
    \end{enumerate}
 \end{corollary}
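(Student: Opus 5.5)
Part (a) follows by checking that the i.i.d. initial condition satisfies the hypotheses of Theorem \ref{fluidTheorem}(a)--(c); part (b) then follows from exchangeability and Sznitman's characterization of chaoticity.

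\emph{Hypotheses.} Assumption \ref{assumption0} and Assumption \ref{assumption4} are given, so it remains to verify Assumptions \ref{assumption2} and \ref{assumption3} and the convergence $\mu_n(0)\cpinf{n}\Gamma$.
\begin{itemize}
\item For Assumption \ref{assumption2}: since $\Gamma\in\PPP_1$, the strong law of large numbers gives $\frac1n\sum_i|x^n_i(0)|\ind_{\{|x^n_i(0)|\ge L\}}\to \int|x|\ind_{\{|x|\ge L\}}\Gamma^\circ(\di x)$ in probability. The right side vanishes as $L\to\infty$, and Markov's inequality turns this into the required double limit.
\item Tie-freeness holds because $\Gamma^\circ$ has a continuous CDF, hence no atoms.
\item For the uniform small-interval condition, apply the Glivenko--Cantelli theorem: $\sup_x|F_n(x)-F(x)|\to0$ a.s., where $F_n$ is the empirical CDF of the initial positions and $F$ the CDF of $\Gamma^\circ$. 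This gives
\[\sup_x\mu_n^\circ(0,(x-\eta,x+\eta))\le 2\sup_x|F_n-F|+\sup_x\bigl(F(x+\eta)-F(x-\eta)\bigr).\]
The last term tends to $0$ as $\eta\to0$ by uniform continuity of a continuous CDF.
\item Finally, $\mu_n(0)\to\Gamma$ in $\PPP_1(\real\times\{+,-\})$ almost surely, by Varadarajan's theorem together with convergence of first moments (SLLN).
\end{itemize}
Theorem \ref{fluidTheorem}(c) then yields a unique solution $\mu\in\TTT$ with $\mu(0)=\Gamma$ and $\mu_n(\cdot)\cpinf{n}\mu(\cdot)$, which is (a).

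\emph{Chaoticity.} Because the initial data are i.i.d., $\Law(\XX^n(0))$ is exchangeable. The dynamics are permutation-equivariant except for the tie-breaking rule $j\succ_\xx i$, which refers to indices. This is the main subtlety. The way around it is that ties occur with probability zero for all $t$: initial positions are tie-free, and jumps have a density by Assumption \ref{assumption4}, so a jump landing exactly on another particle's position has probability zero. Off this null event the generator commutes with permutations of labels. Hence $\Law(\XX^n(t))$ is exchangeable for each $t$.

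By Sznitman's characterization, for exchangeable laws, $\mu(t)$-chaoticity is equivalent to convergence in law of the empirical measure $\mu_n(t)$ to the deterministic $\mu(t)$ in $\PPP(\real\times\{+,-\})$. This convergence follows from (a): the limit $\mu$ is continuous in time, so the evaluation map $\mu(\cdot)\mapsto\mu(t)$ is continuous at $\mu$ in the Skorokhod topology, and convergence in $\PPP_1$ implies weak convergence. For completeness, one can argue directly. Writing $\ff=\otimes_{l=1}^k g_l$ (enough by density), exchangeability gives
\[\expc\Bigl[\prod_{l} g_l(\XX^n_l(t))\Bigr]=\expc\Bigl[\prod_l\inner{g_l,\mu_n(t)}\Bigr]+\bigO(k^2/n).\]
The right side converges to $\prod_l\inner{g_l,\mu(t)}$ by bounded convergence.

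The only step needing genuine care is the exchangeability argument, i.e. showing that the index-dependent tie-breaking is a.s. irrelevant. I would formalize it by constructing the process via Poisson clocks and jump marks, and noting that permuting labels maps the construction to itself on the full-measure event of no ties.
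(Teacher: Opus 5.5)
Your proposal is correct and follows essentially the same route as the paper: verify Assumptions~\ref{assumption2} and~\ref{assumption3} and $\mu_n(0)\to\Gamma$ in $\mathscr{P}_1$ for the i.i.d.\ initial data, invoke Theorem~\ref{fluidTheorem}(c), and then combine almost-sure tie-freeness (which gives exchangeability of $\Law(\XX^n(t))$) with the Sznitman-type equivalence between chaoticity and convergence of the empirical measure, which the paper cites as Lemma~3.19 of \cite{CD22}. One small remark: $\{\XX^n_i(0)\}$ is a triangular array with no coupling across $n$, so your ``almost surely'' claims (Glivenko--Cantelli, Varadarajan) should be read as convergence in probability, which is all Theorem~\ref{fluidTheorem}(c) requires; also, Assumption~\ref{assumption2} follows from Markov's inequality alone via $\expc\bigl[\frac1n\sum_i|x^n_i(0)|\ind_{\{|x^n_i(0)|\ge L\}}\bigr]=\int|x|\ind_{\{|x|\ge L\}}\Gamma^\circ(\di x)$, without the SLLN step.
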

 \subsubsection{Proof Outline and Challenges}
We take the following approach which, at a high level, aligns with that in \cite{BRT14} and \cite{BBI24}. However, several challenges arise, as we shall indicate (briefly) below:
\begin{itemize}
    \item First, under Assumptions \ref{assumption0} and \ref{assumption2}, we will show that $\curlbr*{\mu_n(\cdot)\!: n\in\nat}$ is $\mathscr{C}$-tight in the space $\mathscr{D}\big([0, \infty), \mathscr{P}_1(\real\times \{+,-\})\big)$, which establishes weak convergence of our measure-valued process $\mu_n(\cdot)$ along subsequences. This involves an ``equicontinuity estimate'' in $1$-Wasserstein distance (see Theorem \ref{firstBigThm}) and proving `marginal tightness' (see Theorem \ref{tight2}).

    \item Next, under relevant assumptions, we show that for each $n\in\nat$ and each $f\in\HHH$, the family $\curlbr*{A_{t,f}\big(\mu_n(\cdot)\big)\!: t\geq 0}$ is a martingale that converges (weakly) to zero. In addition, since $\curlbr*{\mu_n(\cdot)\!: n\in\nat}$ is $\CCC$-tight, there exists a subsequence $\curlbr*{\mu_{n_k}(\cdot)\!: k\in\nat}$ that converges weakly to a random measure-valued path $\mu(\cdot)$ in $\mathscr{C}\big([0, \infty), \mathscr{P}_1(\real\times \{+,-\})\big)$. Our next target is to show that
    \begin{align*}
        A_{t,f}\bigparens{\mu_{n_k}(\cdot)} \cdinf{k} A_{t,f}\bigparens{\mu(\cdot)}
    \end{align*}
    for each $t \geq 0$ and $f\in\HHH$. This requires a more careful treatment as the non-linearity and possible discontinuity introduced by $\alpha_n^{\pm}(\cdot,\cdot)$ defined in \eqref{alphandef} require establishing \emph{`a priori' continuity estimates} for $\mu_n(\cdot)$. This is carried out in Lemma \ref{uniformLemma}, and it also shows that subsequential limits have to lie in  $\TTT$. These steps allow us to characterize the behavior of the subsequential limiting measure via the McKean--Vlasov-type fluid limit equation \eqref{originalMVE}.

    \item Lastly, we prove that the solution to the MVE above is unique in the class $\TTT$ under appropriate assumptions. This also turns out to be quite delicate, since attempting to establish a Gr\"onwall type estimate for two solutions of the MVE in terms of the natural $1$-Wasserstein metric seems unfruitful. The reason for this, again, comes from the influence of type change, which makes bounding the rate of change of $1$-Wasserstein distance with respect to itself (the key step in the Gr\"onwall estimate) very challenging. To circumvent this, we obtain a Gr\"onwall estimate in terms of a \emph{Kolmogorov--Smirnov type distance}, which is well-suited to our dynamics. This uniqueness result implies that the limiting measures of all subsequences are the same, and thus the whole sequence converges weakly to the (unique) measure that solves the MVE.
\end{itemize}
 \subsection{Traveling Wave Solutions of the MVE}

Throughout this section, we assume that the function $\varphi$ governing the rate of type change is the identity map. Accordingly, we require $\alpha^+ \ge 1$. Recall that the fluid limit MVE is given by
\begin{align}
\label{originalFluid}
    \inner*{f, \mu(t)} &= \inner*{f, \mu(0)} + \int_0^t \sqbr*{\inner*{v_+ g_f(\cdot, +), \mu^+(s)} + \inner*{v_- g_f(\cdot, -), \mu^-(s)}} \di s \nonumber \\
    &\qquad +\int_0^t\!\Bigsqbr{\biginner{\alpha^+(s, \cdot) h_f(\cdot, -), \mu^+(s)} + \biginner{\alpha^-(s, \cdot) h_f(\cdot, +), \mu^-(s)}} \di s,
\end{align}
for any $f\in \HHH$. As the whole system moves to the right when at least one among $v_+$ and $v_-$ is positive, there is no stable long-time limit of measure-valued solutions to the above equation. However, our interest is in the stabilization of the \emph{shape of the distribution in a moving reference frame}, a so-called \textbf{traveling wave}. First, we formally define a traveling wave:
\begin{definition}[Traveling Wave]
\label{traveldefn}
    We call a $\CCC^2$ function $H = \parens*{H_+, H_-}\!: \real \to [0, 1]^2$ a \textbf{traveling wave} with mass partition $(\rho^+, \rho^-)$, for some $\rho^\pm \in [0,1]$ with $\rho^+ + \rho^- = 1$, and speed $\gamma > 0$, if
    \begin{enumerate}[label = \arabic*.]
        \item $H$ is coordinate-wise non-decreasing;

        \item $\lim_{t\to-\infty} H(t) = (0, 0)$ and $\lim_{t\to\infty} H(t) = (1, 1)$; and

        \item the measure-valued process $\curlbr*{\mu^\pm(s)\!: s\geq 0}$, induced by the (extended) cumulative distribution function processes $\curlbr*{\rho^\pm H_\pm(\cdot - \gamma s)\!: s\geq 0}$, solves the mean-field equation \eqref{originalFluid}.
    \end{enumerate}
\end{definition}
For any $z\in\real$, let us denote $F_{\pm}(z) := \rho^{\pm} H_{\pm}(z)$.

The following is the main result of this section, which addresses the existence of traveling wave solutions to \eqref{originalFluid}, as well as specifies some features of such solutions under suitable assumptions.
\begin{theorem}
\label{travelwave}
    Suppose the jump steps are independent $\expo(1)$ random variables; that is, $Z \sim \expo(1)$.
    \begin{enumerate}[label = (\alph*)]
        \item \label{a0} For a traveling wave solution $H = \parens*{H_+, H_-}$ for \eqref{originalFluid} with mass partition $(\rho^+, \rho^-)$ and speed $\gamma$ to exist, we must necessarily have
        \begin{align*}
            \rho^+ &= \dfrac{1 - \alpha^+ - \alpha^- + \dsqrt{\parens*{1 - \alpha\displaystyle^+ - \alpha\displaystyle^-}^2 + 4\alpha^-}}{2}, \\
            \rho^- &= \dfrac{1 + \alpha^+ + \alpha^- - \dsqrt{\parens*{1 - \alpha\displaystyle^+ - \alpha\displaystyle^-}^2 + 4\alpha^-}}{2}, \\
            \intertext{and}
            \gamma &= v_+\rho^+ + v_-\rho^- \\
            &= \dfrac{\parens*{v_+ + v_-} + \parens*{v_+ - v_-} \parens*{\dsqrt{\parens*{1 - \alpha\displaystyle^+ - \alpha\displaystyle^-}^2 + 4\alpha\displaystyle^-}-\alpha\displaystyle^+ - \alpha\displaystyle^-}}{2}.
        \end{align*}
        In particular, $\rho^+ = \rho^- = \dfrac{1}{2}$ if and only if $\alpha^+ - \alpha^- = \dfrac{1}{2}$.

        \item \label{b0} Suppose $v_+ > v_- = 0$. Then a traveling wave solution $H = (H_+, H_-)$ exists; furthermore, it satisfies the following:
        \begin{enumerate}[label = (\arabic*)]
            \item \label{b1} For each $t\in\real$, $H_+(t) \geq H_-(t)$;

            \item \label{b2} The ``asymptotic left-tail exponent'' for the traveling wave is given by
              \begin{align*}
                \lim_{t\to-\infty} \dfrac{\log H_\pm(t)}{t}&= \dfrac{v_+-\gamma+\alpha^+ +\alpha^- +\rho^+-\rho^-}{2\gamma} \\
                &\quad -\dfrac{1}{2} \sqrt{\parens*{\!\dfrac{v_+ - \gamma +\alpha^+ - \alpha^- -1}{\gamma}\!}^2 + \dfrac{4\parens*{\alpha\displaystyle^+ -\rho\displaystyle^-} \parens*{\rho\displaystyle^+ +\alpha\displaystyle^-+\gamma}}{\gamma\displaystyle^2}} > 0,
            \end{align*}
            where the quantity in the square root is necessarily positive.

            \item \label{b3} The ``asymptotic right-tail exponent'' is given by
              \begin{align*}
                \lim_{t\to\infty} \sqbr*{-\dfrac{\log\bigparens{1 - H_\pm(t)}}{t}} &= -\dfrac{v_+ - \gamma +\alpha^+ + \alpha^-}{2\gamma} \\
                &\qquad + \dfrac{1}{2}\sqrt{\parens*{\dfrac{v_+ - \gamma + \alpha^+ - \alpha^-}{\gamma}}^2 + \dfrac{4\alpha\displaystyle^+ \parens*{\alpha\displaystyle^- + \gamma}}{\gamma\displaystyle^2}} > 0.
            \end{align*}
        \end{enumerate}
    \end{enumerate}
\end{theorem}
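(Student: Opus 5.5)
I would first derive the traveling-wave equations in the moving frame and then exploit the exponential jump law to make them local. Put $z=x-\gamma s$ and $F_\pm=\rho^\pm H_\pm$, and test \eqref{originalFluid} against (Lipschitz approximations of) $f(x,\sigma)=\ind_{\{x\le y,\ \sigma=\pm\}}$. Since $\varphi$ is the identity, the rate for a $+$ particle at $z$ is $\alpha^+-(\rho^--F_-(z))$ and for a $-$ particle it is $\alpha^-+(\rho^+-F_+(z))$. This gives the non-local system (the content of Lemma \ref{integrodiff})
\begin{align*}
-\gamma F_+'(y) &= -v_+\int_{-\infty}^y e^{-(y-u)}\,(1-\ldots)\,\di F_+(u)\ \text{(mass jumping past $y$)} -\int_{-\infty}^y\bigl(\alpha^+-\rho^-+F_-\bigr)\di F_+ +\int_{-\infty}^y\bigl(\alpha^-+\rho^+-F_+\bigr)\di F_- ,
\end{align*}
and the analogous equation for $F_-$. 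With $Z\sim\expo(1)$ the jump term is $v_\pm\bigl(F_\pm(y)-\int_{-\infty}^y e^{-(y-u)}\di F_\pm(u)\bigr)=v_\pm G_\pm(y)$, where $G_\pm$ satisfies $G_\pm'+G_\pm=F_\pm'$. Applying $(1+\tfrac{d}{dy})$ therefore removes the convolution and produces a local second-order ODE system.

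For part (a) I would let $y\to\infty$ in the integrated equations. First, the total-mass flux balance forces the mass exchange to balance:
\[
\int(\alpha^+-\rho^-+F_-)\,\di F_+=\int(\alpha^-+\rho^+-F_+)\,\di F_-.
\]
Using $\int F_-\di F_++\int F_+\di F_-=\rho^+\rho^-$ (integration by parts with continuous $F$), the two unknown cross integrals cancel. This leaves $\alpha^+\rho^+-\rho^+\rho^-=\alpha^-\rho^-+(\rho^+)^2\rho^-\cdot(\ldots)$, which I expect to reduce to the quadratic $(\rho^+)^2+(\alpha^++\alpha^--1)\rho^+-\alpha^-=0$ after substituting $\rho^-=1-\rho^+$. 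Its positive root is the stated formula. Second, testing with $f(x)=x$ gives the mean velocity $\gamma=v_+\rho^++v_-\rho^-$. The claim $\rho^\pm=\tfrac12$ exactly when $\alpha^+-\alpha^-=\tfrac12$ follows by plugging $\tfrac12$ into the quadratic.

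For part (b), with $v_-=0$ and $\gamma=v_+\rho^+$, I would write the local system in the variables $(F_+,F_-)$. The $F_-$ equation involves no convolution, so it is first order. The $F_+$ equation becomes first order after integrating once, using the limits at $-\infty$. This yields a planar autonomous system $(F_+,F_-)'=\Phi(F_+,F_-)$, with equilibria $(0,0)$ and $(\rho^+,\rho^-)$. I would then:
\begin{itemize}
\item linearize at both equilibria;
\item show that $(0,0)$ has a one-dimensional unstable manifold entering the positive quadrant, which amounts to checking that the determinant is negative, i.e.\ $(\alpha^+-\rho^-)(\rho^++\alpha^-+\gamma)>0$, itself a consequence of the quadratic;
\item show that $(\rho^+,\rho^-)$ is a saddle or a node attracting from the relevant side.
\end{itemize}

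The heteroclinic orbit comes from a trapping-region argument. I would build a region $R\subset[0,\rho^+]\times[0,\rho^-]$ bounded by the nullclines and by the line $F_+/\rho^+=F_-/\rho^-$ (giving $H_+\ge H_-$), on which $\Phi$ points inward and both components of $\Phi$ are nonnegative. The unstable branch from the origin enters $R$, remains there with both coordinates monotone, and therefore converges to an equilibrium in $\overline R$, which must be $(\rho^+,\rho^-)$. Monotonicity and the limits then give conditions 1–2 of Definition \ref{traveldefn}, and $\CCC^2$ regularity follows from the ODE. Condition 3 is checked by reversing the derivation.

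The tail exponents in (b)(2) and (b)(3) are the relevant eigenvalues: the positive eigenvalue at $(0,0)$ for the left tail, and the magnitude of the stable eigenvalue at $(\rho^+,\rho^-)$ for the right tail. They are computed from the trace and determinant of the Jacobians and give the displayed quadratic-formula expressions. The limits of $\log H_\pm/t$ follow from the Hartman–Grobman/stable-manifold asymptotics, provided the orbit's tangent direction has both components nonzero.

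I expect the main obstacle to be constructing the invariant region and verifying the sign conditions on its boundary for all parameter values. That verification relies on delicate algebra using the quadratic relation for $\rho^+$. If it fails, I would first try using the ratio $u=F_-/F_+$ together with $F_+$ as phase coordinates, which may make the boundary inequalities transparent.
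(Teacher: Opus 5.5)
Your part (a) is correct and more economical than the paper's route. Testing \eqref{originalFluid} with $f=\ind_{\{\sigma=+\}}$ gives the flip balance, and $\int F_-\,\di F_+ +\int F_+\,\di F_-=\rho^+\rho^-$ reduces it exactly to $\alpha^+\rho^+-\rho^+\rho^-=\alpha^-\rho^-$ (your displayed intermediate identity has a spurious extra term). Testing with $f(x,\sigma)=x$ gives $\gamma=v_+\rho^++v_-\rho^-$ directly. Neither step uses the exponential jump law, whereas the paper gets the same identities by localizing the equation and letting $z\to\pm\infty$. Your reduction to a planar first-order system when $v_-=0$ is also fine.

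The genuine gap is in the phase-plane picture for (b): you have the types of the two equilibria reversed. In forward ($H$-)time, the Jacobian of your $(F_+,F_-)$ system at the origin is
\[
\frac{1}{\gamma}\begin{bmatrix} v_+-\gamma+\alpha^+-\rho^- & -(\rho^++\alpha^-+\gamma)\\ -(\alpha^+-\rho^-) & \rho^++\alpha^- \end{bmatrix}.
\]
Its trace is positive, and its determinant is $v_+\rho^+\rho^-/\gamma^2>0$ (use $\gamma=v_+\rho^+$ and $\alpha^+\rho^+-\alpha^-\rho^-=\rho^+\rho^-$). The positivity of $(\alpha^+-\rho^-)(\rho^++\alpha^-+\gamma)$ only makes the discriminant positive. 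So the origin is an unstable node, and the saddle is $(\rho^+,\rho^-)$, where the determinant is $-v_+\rho^+\rho^-/\gamma^2<0$.

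Consequently there is no distinguished unstable branch at the origin, and the trapping region you describe cannot exist. Suppose $R\subset[0,\rho^+]\times[0,\rho^-]$ were compact and forward-invariant, with nonempty interior and $\Phi\ge0$ componentwise on $R$. Then every point of $R\setminus\{(0,0)\}$ would increase monotonically to the only other equilibrium, the saddle. That would put an open set inside the saddle's one-dimensional stable manifold, which is impossible. Concretely, on the two nullclines bounding the set where $\Phi\ge0$, the field points outward in forward time. So the boundary check you flagged will fail, and no change of coordinates such as $F_-/F_+$ can repair it.

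The missing idea is to shoot from the saddle rather than from the node, i.e.\ to reverse time, as the paper does with $G_\pm(t)=H_\pm(-t)$. In normalized coordinates, $(1,1)$ then has a one-dimensional unstable manifold. Its eigenvector slope lies strictly between the slopes at $(1,1)$ of the two nullclines, so the branch leaving into the unit square enters the region between the nullclines. That region is forward-invariant for the reversed flow and both coordinates are non-increasing there, so monotone convergence forces the limit $(0,0)$, now a stable node. The region also lies below the diagonal, which gives $H_+\ge H_-$ without using the diagonal as a boundary.

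For the left tail you must also decide which of the two positive eigenvalues at the origin governs. The eigenvector of the faster one has coordinates of opposite signs, while the orbit stays in the open positive quadrant, so it leaves tangent to the slow, same-sign eigendirection. This is why (b)(2) is the smaller root $\tfrac12\bigl(\mathrm{tr}-\sqrt{\mathrm{tr}^2-4\det}\bigr)$ of the forward-time Jacobian. Your identification of the right-tail exponent with the modulus of the stable eigenvalue at the saddle is correct.
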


\begin{remark}
We note the following:
\begin{enumerate}[label = (\roman*)]
    \item Part \ref{b0}\ref{b1} of Theorem \ref{travelwave} leads to the surprising observation that, although particles of type $+$ are faster, the proportion of type $+$ particles behind any given point is at least as large as that for type $-$ particles, under the traveling wave approximation.

    \item Suppose $v_+>v_-$. For a fixed `net rate of change' $s = \alpha^+ + \alpha^-$, the formulas in Theorem \ref{travelwave}\ref{a0} imply that $\rho^+$, and consequently, the wave speed $\gamma$, are decreasing functions of the rate difference $d = \alpha^+ - \alpha^-$. Thus, by increasing $\alpha^-$, the `intrinsic' rate of change of $-$ to $+$ (or by the same token, decreasing $\alpha^+$), the proportion of $+$ (faster individuals) is also increased, and thus the speed of the traveling wave is also increased. This observation, although natural, is not entirely obvious, as the net rate of change of $-$ to $+$ at time $t$ and location $z + \gamma t$, given by
    \begin{align*}
        \beta^-(z) = \alpha^-(t, z+ \gamma t) = \alpha^- + \rho^+ - F_+(z),
    \end{align*}
    depends on $\alpha^+$ and $\alpha^-$ in a more implicit way.

    By Theorem \ref{travelwave}\ref{b0}\ref{b1}, these $+$ individuals aggregate more towards the back of the wave and push the whole `flock' into moving faster.

    \item Observe that, regardless of the choice of parameters, the wave speed and tail exponents agree between the two types. Thus, the flocking mechanism is strong enough to prevent segregation of types.
\end{enumerate}
\end{remark}

The following corollary quantifies traveling wave properties via Theorem \ref{travelwave} in asymptotic regimes where $\rho^+$ approaches $0$ and $\rho^+$ approaches $1$:

\begin{corollary}
\label{tailclean}
    Suppose that $v_+ > v_- = 0$.
    \begin{enumerate}[label = (\alph*)]
        \item Suppose that $s = \alpha^+ + \alpha^->1$ is fixed. We have
        \begin{align*}
            \lim_{\alpha^- \downarrow 0} \frac{\rho^+}{\alpha^-} = \frac{1}{s-1} & & \text{and} & & \lim_{\alpha^- \downarrow 0} \frac{\gamma}{\alpha^-} = \frac{v_+}{s-1};
        \end{align*}
        furthermore, the traveling wave tail exponents satisfy
        \begin{align*}
            \lim_{\alpha^- \downarrow 0} \lambda_R = \frac{1}{v_+ + s} & & \text{and} & & \lim_{\alpha^- \downarrow 0} \lambda_L = \frac{1}{v_+ + s-1}.
        \end{align*}
        This is referred to as the ``small-$\alpha^-$'' regime.

        \item Suppose that $\alpha^- \uparrow \infty$ and $\alpha^+ = \smallo(\alpha^-)$. Then
        \begin{align*}
            \rho^- = \frac{\alpha^+}{\alpha^-}\bigparens{1 + \smallo(1)} & & \text{and} & & \gamma = v_+ - \frac{v_+\alpha^+}{\alpha^-}\bigparens{1 + \smallo(1)},
        \end{align*}
        and the traveling wave tail exponents satisfy
        \begin{align*}
            \lambda_R = \frac{\alpha^+}{(\alpha\displaystyle^-)\displaystyle^2}\bigparens{1 + \smallo(1)} & & \text{and} & & \lambda_L = \frac{\alpha^+}{(\alpha\displaystyle^-)\displaystyle^2}\bigparens{1 + \smallo(1)}.
        \end{align*}
        This is referred to as the ``large-$\alpha^-$'' regime.
    \end{enumerate}
\end{corollary}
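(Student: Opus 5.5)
The corollary follows by direct asymptotic expansion of the closed-form expressions in Theorem \ref{travelwave}. Throughout, $\lambda_R$ and $\lambda_L$ denote the right- and left-tail exponents in parts \ref{b0}\ref{b3} and \ref{b0}\ref{b2}. Since $v_-=0$, part \ref{a0} gives $\gamma = v_+\rho^+$. So every asymptotic statement about $\gamma$ follows from the corresponding one for $\rho^+$ or $\rho^- = 1-\rho^+$.

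\emph{Mass partition.} In the small-$\alpha^-$ regime, write $\alpha^+ = s-\alpha^-$, so that $1-\alpha^+-\alpha^- = 1-s<0$. Then
\[
\sqrt{(1-s)^2+4\alpha^-} = (s-1) + \frac{2\alpha^-}{s-1} + \bigO\bigl((\alpha^-)^2\bigr),
\]
and the formula for $\rho^+$ immediately yields $\rho^+ = \alpha^-/(s-1) + \bigO((\alpha^-)^2)$. Multiplying by $v_+$ gives the claim for $\gamma$. In the large-$\alpha^-$ regime, set $S=\alpha^++\alpha^-$ and use
\[
\sqrt{(S-1)^2+4\alpha^-} = (S-1)\sqrt{1+\frac{4\alpha^-}{(S-1)^2}} .
\]
Expanding to second order gives $\rho^- = \alpha^+/\alpha^-\,(1+\smallo(1))$, using $\alpha^+ = \smallo(\alpha^-)$ and $\alpha^-\to\infty$. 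Again $\gamma = v_+(1-\rho^-)$ follows.

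\emph{Reformulating the tail exponents.} A direct expansion is delicate because in both regimes the two terms in each exponent nearly cancel. In the small-$\alpha^-$ regime there is a further difficulty: they are divided by $\gamma\to0$. To handle this, I would rewrite each exponent in the form
\[
\lambda = \frac{\sqrt{A^2+\delta}-A}{2\gamma} = \frac{\delta}{2\gamma\bigl(\sqrt{A^2+\delta}+A\bigr)} .
\]
For $\lambda_R$, take $A = v_+-\gamma+\alpha^++\alpha^-$ and $B = v_+-\gamma+\alpha^+-\alpha^-$. The identity $A^2-B^2 = 4\alpha^-(v_+-\gamma+\alpha^+)$ shows that the radicand equals $A^2+\delta_R$ with
\[
\delta_R = 4\alpha^+\gamma - 4\alpha^-(v_+-\gamma).
\]
For $\lambda_L$, do the same with $A' = v_+-\gamma+\alpha^++\alpha^-+\rho^+-\rho^-$ and $B' = v_+-\gamma+\alpha^+-\alpha^--1$. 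This gives $\delta_L = A'^2 - B'^2 - 4(\alpha^+-\rho^-)(\rho^++\alpha^-+\gamma)$ with the sign arranged so that $\lambda_L$ has the displayed form. Then $\delta_L$ is simplified using the quadratic relation $(\rho^+)^2 - (1-\alpha^+-\alpha^-)\rho^+ - \alpha^- = 0$, which is satisfied by $\rho^+$ by part \ref{a0}.

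\emph{Small-$\alpha^-$ regime.} Here $A\to v_++s$, and $\delta_R/(4\gamma) = \alpha^+ - \alpha^-(v_+-\gamma)/\gamma \to s-(s-1) = 1$, using $\gamma/\alpha^-\to v_+/(s-1)$. Since $\delta_R\to0$, $\sqrt{A^2+\delta_R}+A\to 2A$, and so $\lambda_R\to 1/(v_++s)$. The same computation for $\lambda_L$ uses $A'\to v_++s-1$, because $\rho^+-\rho^-\to-1$. After substituting the first-order expansion of $\rho^+$ and the quadratic relation, it should give $\delta_L/(4\gamma)\to1$, hence $\lambda_L\to1/(v_++s-1)$.

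\emph{Large-$\alpha^-$ regime.} Here $A\sim\alpha^-$, $A'\sim\alpha^-$ and $\gamma\to v_+$. Since $\lambda\approx\delta/(4\gamma A)$, the claim reduces to showing $\delta = 4v_+\alpha^+/\alpha^-\,(1+\smallo(1))$. This requires that the leading terms of order $\alpha^+$ in $\delta$ cancel. For $\delta_R$, for instance, one uses $\alpha^-(v_+-\gamma) = \alpha^- v_+\rho^-$ together with the expansion of $\rho^-$ to second order, i.e.\ including the correction of order $\alpha^+/(\alpha^-)^2$.

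\emph{Main obstacle.} This bookkeeping is the step I expect to be hardest, especially for $\lambda_L$ in the large-$\alpha^-$ regime, because the extra terms $\rho^\pm$ and $-1$ in $A'$ and $B'$ must cancel exactly at leading order. I would handle it by eliminating $(\rho^+)^2$ via the quadratic relation before expanding, so that only terms linear in $\rho^\pm$ remain. These can then be controlled using the second-order expansion of $\rho^-$.
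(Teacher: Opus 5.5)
Your mass-partition asymptotics are fine, and so is your treatment of the small-$\alpha^-$ regime. There the two pieces of $\delta_R/(4\gamma)$, and likewise of $\delta_L/(4\gamma)$, have distinct finite limits ($s$ and $s-1$), so nothing cancels. Your rationalized form also handles $\gamma\to0$ more cleanly than the paper's first-order expansion.

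The gap is in the large-$\alpha^-$ tail exponents. You correctly reduce the claim to $\delta\sim 4v_+\alpha^+/\alpha^-$. But the mechanism you propose, cancelling the order-$\alpha^+$ terms with a second-order expansion of $\rho^-$, is not carried out, and as set up it does not work uniformly over the regime.

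Since $\delta_R/(4v_+)=\alpha^+-(\alpha^++\alpha^-)\rho^-$, you need $\rho^-$ to absolute precision $o(\alpha^+/(\alpha^-)^2)$. The expansion fails in two directions:
\begin{itemize}
\item Your expansion of $(S-1)\sqrt{1+4\alpha^-/(S-1)^2}$, truncated at second order, leaves an error of order $(\alpha^-)^{-2}$ in $\rho^-$. This is too coarse when $\alpha^+$ stays bounded. For $\alpha^+=1$ it gives $\rho^-\approx 1/\alpha^-$, hence $\alpha^+-S\rho^-\approx -1/\alpha^-$, whereas the true value is $+(1+o(1))/\alpha^-$.
\item A fixed-order expansion in powers of $1/\alpha^-$ fails when $\alpha^+\to\infty$. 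For $\alpha^+=\alpha^-/\log\alpha^-$, every neglected term has relative size $(\alpha^+/\alpha^-)^k$, which exceeds the required relative precision $1/\alpha^-$.
\end{itemize}
Your plan for $\delta_L$ goes the wrong way as well. Eliminating $(\rho^+)^2$ to make everything linear turns $\delta_L/(4v_+)$ into the same cancellation-prone quantity $\alpha^+-(\alpha^++\alpha^-)\rho^-$.

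The missing idea is to read the quadratic relation in product form, which makes the computation exact. Use $\gamma=v_+\rho^+$, $v_+-\gamma=v_+\rho^-$ and $\alpha^+\rho^+-\alpha^-\rho^-=\rho^+\rho^-$. Then
\[
\delta_R=4v_+\bigl(\alpha^+\rho^+-\alpha^-\rho^-\bigr)=4\gamma\rho^-,
\]
\[
\delta_L=4\bigl[(\alpha^-+\rho^+)(v_+-\gamma)-\gamma(\alpha^+-\rho^-)\bigr]=4v_+\bigl[2\rho^+\rho^--\bigl(\alpha^+\rho^+-\alpha^-\rho^-\bigr)\bigr]=4\gamma\rho^-.
\]
Hence
\[
\lambda_R=\frac{2\rho^-}{A+\sqrt{A^2+4\gamma\rho^-}},\qquad \lambda_L=\frac{2\rho^-}{A'+\sqrt{A'^2-4\gamma\rho^-}},
\]
with no cancellation left. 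Since $A,A'\sim\alpha^-$ and $\gamma\rho^-\to0$, both equal $(\rho^-/\alpha^-)(1+o(1))=\alpha^+(\alpha^-)^{-2}(1+o(1))$.

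This is exactly what the paper does: it rewrites the two radicands as $C_R^2+4\gamma\rho^-$ and $A^2-4\gamma\rho^-$, where its $C_R$ and $A$ are your $A$ and $A'$. The same identity gives $\delta/(4\gamma)=\rho^-\to1$ in the small regime at once. Rationalizing $\rho^-=2\alpha^+/\bigl((1+S)+\sqrt{(S-1)^2+4\alpha^-}\bigr)$ gives the large-regime mass partition with no Taylor remainder to control.
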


\begin{remark}\label{twalpharem}
    Regarding the two regimes mentioned above, we have the following:
    \begin{enumerate}[label = (\roman*)]
        \item In the ``small-$\alpha^-$'' regime, where $\alpha^- \downarrow 0$, the wave speed approaches $0$ and the tail behavior is dictated by the vanishing fraction of $+$ particles through the parameters $s \approx \alpha^+$ and $v_+$. Increasing $v_+$ increases the proportion of `large' jumps. Increasing $\alpha^+$ decreases the proportion of $+$ particles even further leading to a weakening of the flocking mechanism, which is essentially manifested through the $+$ particles driving the $-$ particles (which form the majority proportion) and keeping the whole population shape localized.

        \item In the ``large-$\alpha^-$'' regime, with $\alpha^- \bg \alpha^+$, the majority of particles are of the $+$ type. This increases the wave speed towards $v_+$, the maximal admissible value. In this faster reference frame, without the separation of types, the effect of the Central Limit Theorem takes over, leading to diminishing flocking and heavy tails.
    \end{enumerate}
\end{remark}

Figure \ref{fig:TWplots} gives kernel density estimator (KDE) plots for the empirical distribution of $500$ particles initiated at i.i.d. $\norm(0,4)$ locations with initial types sampled with probability $\mathbb{P}(\sigma^n_i(0) = \pm) = \rho^{\pm}$ for $i \in [500]$, where $\rho^+$ and $\rho^-$ are set in terms of $\alpha^+$ and $\alpha^-$, as given by Theorem \ref{travelwave}(a). The first plot gives four snapshots, at times $15$, $30$, $45$, and $60$, with parameters $\alpha^+ = 1.3, \alpha^-=0.6,v_+=2, v_-=0$, in the moving reference frame with speed $\gamma \approx 0.892$ (computed using Theorem \ref{travelwave}), exhibiting emergence of a traveling wave front. The second plot gives the overlay plot (combining both types) at times $20$, $40$, $60$, and $80$, with $v_+=2$, $v_-=0$, $\alpha^+=1.25$, and $\alpha^-=0.08$ (which represent the ``small-$\alpha^-$'' regime). The third plot gives the overlay plot (again combining both types) at times $10$, $20$, $30$, and $40$, still with $v_+=2$, $v_-=0$, but this time, $\alpha^+=1.2$ and $\alpha^-=6$ (which represent the ``large-$\alpha^-$'' regime). Clearly, the latter plot has heavier tails than the former.

\begin{figure}[htbp]
    \centering
    \begin{subfigure}[b]{0.62\textwidth}
        \centering
        \includegraphics[width=\textwidth]{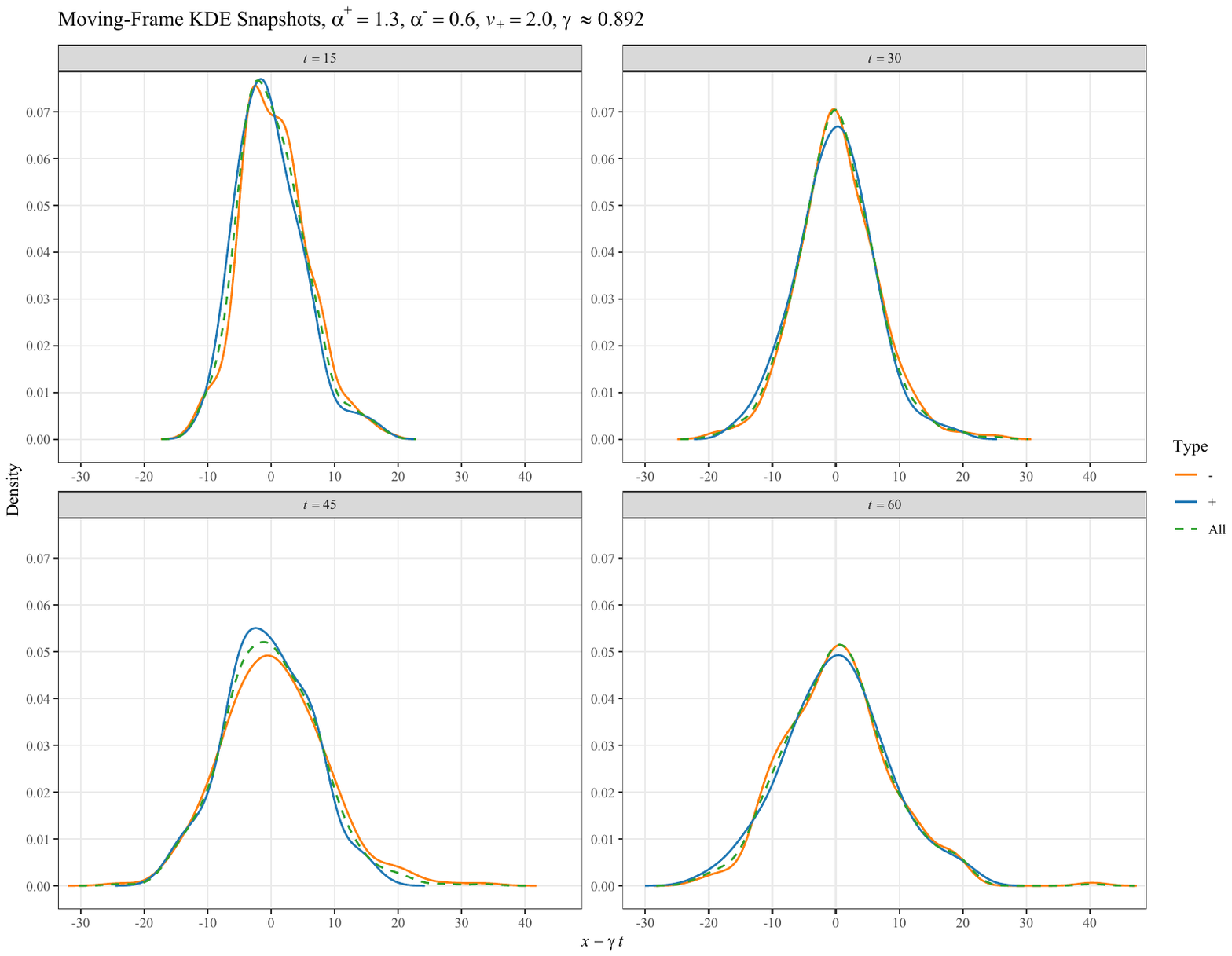}
    \end{subfigure}
    \\
    \begin{subfigure}[b]{0.52\textwidth}
        \centering
        \includegraphics[width=\textwidth]{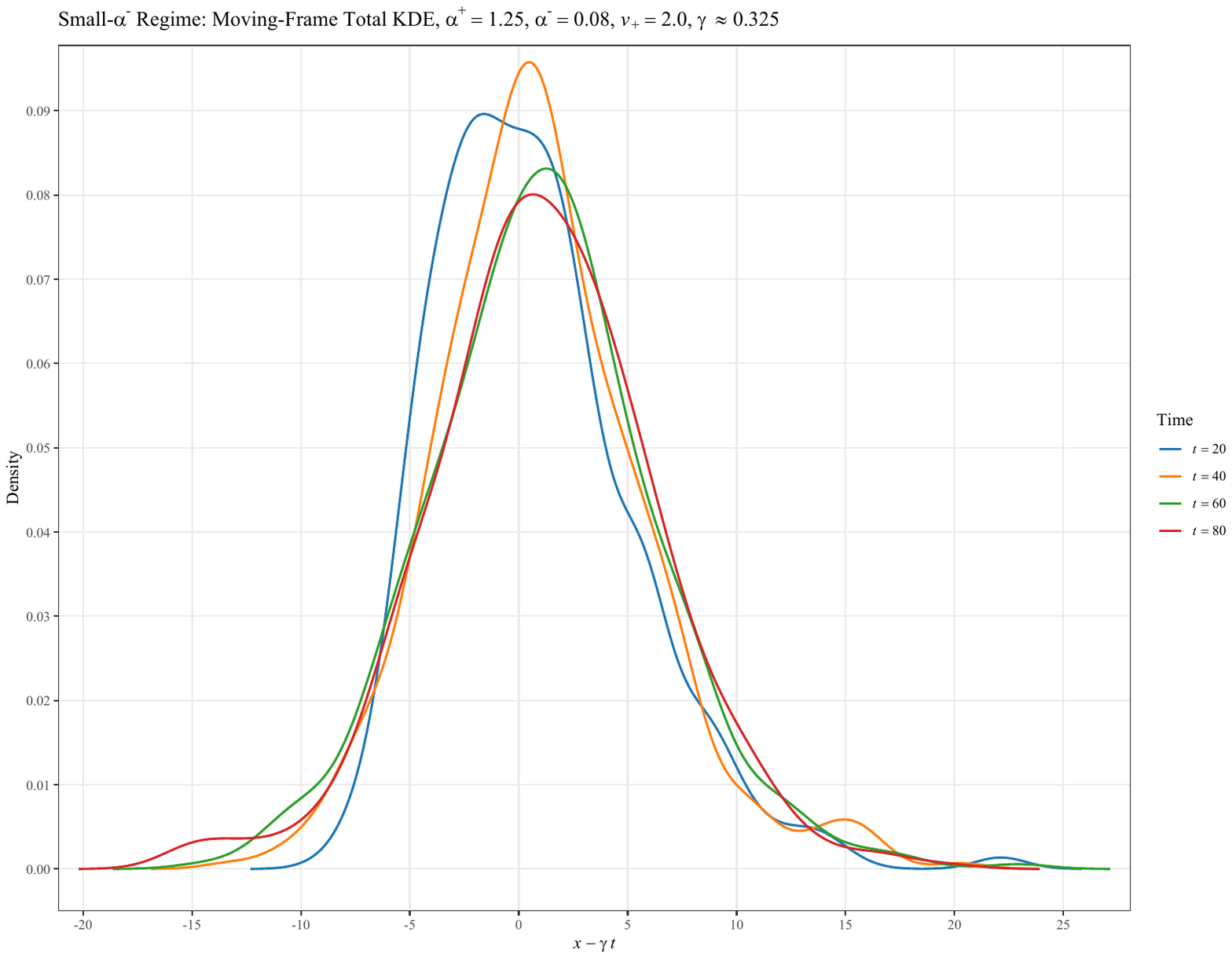}
    \end{subfigure}
    \\
    \begin{subfigure}[b]{0.52\textwidth}
        \centering
        \includegraphics[width=\textwidth]{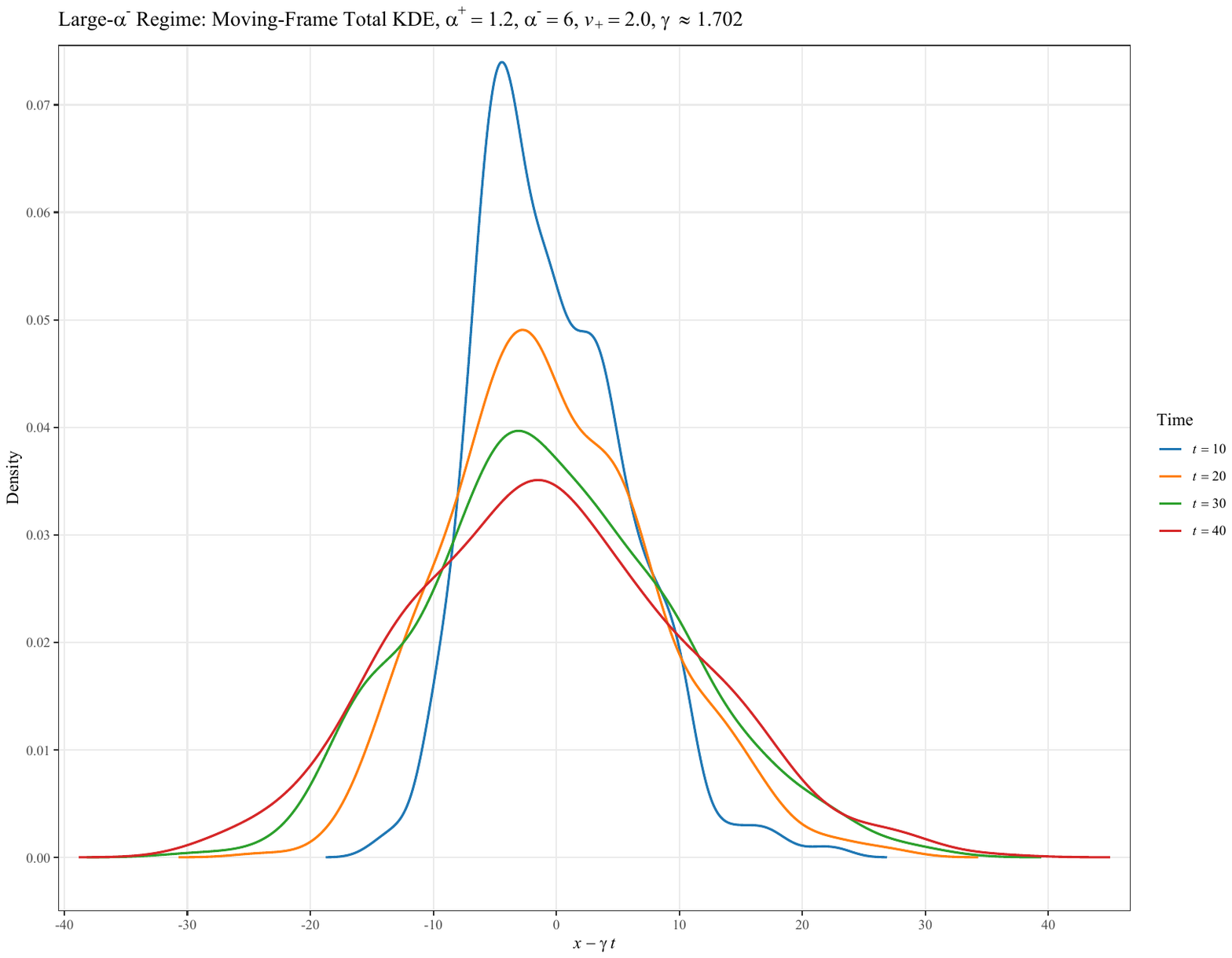}
    \end{subfigure}
    \caption{Traveling wave plots}
    \label{fig:TWplots}
\end{figure}

\subsubsection{Proof Outline and Challenges}
The first step involves obtaining a `strong-formulation' of the traveling wave equation in terms of an autonomous pair of non-linear, non-local integro-differential equations, derived from the `weak-formulation' given by \eqref{originalFluid} [which involves integrals against test-functions in $\HHH$; see \eqref{integrodiff}].

Most examples where traveling waves are shown to exist involve a \emph{dynamical systems} approach where the system is analyzed through coupled ODEs in a (typically) two-dimensional phase space \cite{baryshnikov2025large,an1937etude}. In these references, there is a single type and the second-order ODE governing the dynamics is re-expressed in terms of two coupled \emph{first-order ODEs}, one for the function and the other for its derivative. This is key to analyzing traveling wave solutions via the spectral properties of the associated \emph{linearized system} near equilibrium points.

Obtaining traveling wave solutions for our system of equations \eqref{integrodiff} is challenging owing to (i) its non-local nature which makes ODE-based techniques not readily applicable; and (ii) each equation in the pair \eqref{integrodiff} is second-order, and direct attempts to apply the methods in \cite{an1937etude,baryshnikov2025large} will produce \emph{four} first-order ODEs, leading to an extremely complicated phase space analysis.

These obstacles are circumvented via the following steps:
\begin{itemize}
    \item We circumvent the issue (i) by considering the special case of \emph{exponential jump distribution}, which leads to a \emph{local} description of the non-local equations in terms of second-order coupled ODEs, see \eqref{fluid2}-\eqref{fluid3}. A similar advantage of using exponential jumps is also exploited in \cite{BRT14,BBI24,baryshnikov2025large} for their respective (single-type) models. Analyzing these ODEs leads to the first part of Theorem \ref{travelwave}.

    \item The second obstacle is handled by imposing $v_-=0$. A careful analysis of the ODE system under this additional constraint leads to a \emph{coupled pair of first-order ODEs} for the (complementary) cumulative distribution functions, governing the distribution of particles of each type, obtained in Lemma \ref{twreversed}. This is crucial to the analysis.

    \item In Lemma \ref{linearode}, the only \emph{equilibrium points} of this ODE system are identified as $(0,0)$ and $(1,1)$, and a detailed spectral analysis is performed for the associated \emph{linearized system} around these equilibrium points. In particular, $(0,0)$ is shown to be a stable equilibrium whereas $(1,1)$ is shown to have a one-dimensional unstable manifold.

    \item Finally, in Proposition \ref{monoHeter}, we connect the two equilibria by a \emph{heteroclinic orbit} emerging out of the unstable equilibrium $(1,1)$ along the unstable manifold and eventually getting absorbed in the stable equilibrium $(0,0)$. This exhibits the traveling wave in the second part of Theorem \ref{travelwave}. The asymptotic tail behavior is obtained by a local analysis of the system of ODEs around the equilibrium points.
\end{itemize}
\section{Fluid Limit}
\label{fluidSection}

\subsection{Tightness/Relative Compactness}
 Our first target is to show that $\curlbr*{\mu_n(\cdot)\!: n\in\nat}$ is $\CCC$-tight in $\mathscr{D}\big([0, \infty), \mathscr{P}_1(\real \times \{+,-\})\big)$. To do this, we rely on Theorem 3.7.2 in \cite[132]{EK86}, which breaks this target into two smaller targets. The first task is to show the following:
\begin{theorem}
\label{firstBigThm}
    Suppose Assumption \ref{assumption0} holds. Then for any $T > 0$, the sequence $\curlbr*{\mu_n(\cdot)\!: n\in\nat}$ is Wasserstein-equicontinuous on $[0, T]$. That is, for any $\varepsilon > 0$,
    \begin{align*}
        \lim_{\delta\to 0} \limsup_{n\to\infty} \prob\parens*{\sup_{\substack{0\leq s\leq t\leq T \\ 0< t-s<\delta}} \WWW_1\big(\mu_n(s), \mu_n(t)\big) \geq \varepsilon} = 0.
    \end{align*}
    Here, $\WWW_1$ denotes the $1$-Wasserstein distance on the space $\real \times \{+, -\}$; that is, for $\nu_1\in \PPP_1\bigparens{\real\times \{+,-\}}$ and $\nu_2 \in \PPP_1\bigparens{\real\times \{+,-\}}$, $\WWW_1(\nu_1, \nu_2) := \sup_{f\in\HHH} \curlbr*{\int f \mathrm{d} (\nu_1 - \nu_2)}$.
\end{theorem}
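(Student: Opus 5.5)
The plan is to bound $\WWW_1(\mu_n(s),\mu_n(t))$ pathwise by a coupling that keeps each particle matched with itself. For any $f\in\HHH$,
\begin{align*}
\int f\,\dee\bigparens{\mu_n(t)-\mu_n(s)} = \dfrac1n\dsum_{i=1}^n \bigsqbr{f(\XX^n_i(t))-f(\XX^n_i(s))} \le \dfrac1n\dsum_{i=1}^n \Bigparens{\abs{x^n_i(t)-x^n_i(s)} + \ind_{\{\sigma^n_i(t)\neq\sigma^n_i(s)\}}}.
\end{align*}
Jumps are nonnegative by Assumption \ref{assumption4}, but to avoid relying on that, each particle's displacement on $(s,t]$ will be bounded by the sum of the absolute values of its jumps in that interval. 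The type-change indicator is at most the number of flips in $(s,t]$. It therefore suffices to control
\begin{align*}
\dfrac1n\bigparens{S_n(t)-S_n(s)} + \dfrac1n\bigparens{N_n(t)-N_n(s)},
\end{align*}
uniformly over $0\le s\le t\le T$ with $t-s<\delta$. Here $S_n$ is the cumulative sum of absolute jump sizes over all particles, and $N_n$ is the total number of type flips.

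Next, I will realise the dynamics with Poisson clocks so that these totals are dominated by processes with independent increments. Every particle jumps at rate at most $\bar v:=\max(v_+,v_-)$. Every particle flips at rate at most $\bar\alpha:=\max(\alpha^+,\alpha^-+\lnorm{\varphi}_\infty)$, since $0\le\alpha^\pm_n\le \bar\alpha$. By thinning, I construct, for each $i$, a rate-$\bar v$ Poisson process with i.i.d. marks $\abs{Z}$ and a rate-$\bar\alpha$ Poisson process, with the actual events a subset of these. Then $S_n$ is dominated increment-wise by a compound Poisson process $\tilde S_n$ of rate $n\bar v$ with marks $\abs{Z}$, and $N_n$ is dominated increment-wise by a Poisson process $\tilde N_n$ of rate $n\bar\alpha$.

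Each of $\tilde S_n(t)/n - c t$, with $c=\bar v\,\expc\abs{Z}$, and $\tilde N_n(t)/n-\bar\alpha t$ is a martingale. By Assumption \ref{assumption0}, $\expc Z^2<\infty$, so the quadratic variations are $O(1/n)$ on $[0,T]$. Doob's $L^2$ inequality then gives
\begin{align*}
\sup_{t\le T}\abs*{\tilde S_n(t)/n - ct}\cpinf{n}0, \qquad \sup_{t\le T}\abs*{\tilde N_n(t)/n-\bar\alpha t}\cpinf{n}0.
\end{align*}
Consequently, on an event whose probability tends to one,
\begin{align*}
\sup_{t-s<\delta}\bigl[(\tilde S_n(t)-\tilde S_n(s))/n+(\tilde N_n(t)-\tilde N_n(s))/n\bigr]\le (c+\bar\alpha)\delta+o(1).
\end{align*}
Choosing $\delta<\varepsilon/(2(c+\bar\alpha))$ makes the probability in Theorem \ref{firstBigThm} vanish as $n\to\infty$, which proves the claim.

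The main obstacle I expect is making the domination rigorous. The flip rates depend on the whole configuration and are discontinuous, so I will need an explicit construction: uniform marks at each clock ring, accepted with probability $\alpha^{\sigma_i}_n/\bar\alpha$, and similarly for jumps. I must also check that $\alpha^{\pm}_n\ge0$, which uses $\lnorm{\varphi}_\infty\le\alpha^+$. Once this is done, the remaining steps are the standard law-of-large-numbers bounds described above.
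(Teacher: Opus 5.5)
Your proposal is correct and follows essentially the same route as the paper. Both arguments use the self-coupling bound $\WWW_1(\mu_n(s),\mu_n(t))\le \frac1n\sum_i\bigl(|x^n_i(t)-x^n_i(s)|+\ind_{\{\sigma^n_i(t)\neq\sigma^n_i(s)\}}\bigr)$, dominate displacements and flip counts by independent Poisson clocks, and apply Doob's $L^2$ inequality to the centred (compound) Poisson martingales, whose second moments are $O(1/n)$. Your use of $|Z|$ with an explicit thinning construction is a mild improvement: the paper's comparison with its auxiliary system tacitly assumes nonnegative jumps, which come from Assumption \ref{assumption4} rather than Assumption \ref{assumption0}, and it leaves the coupling implicit.
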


\begin{proof}
    Let $f\in \HHH$ be chosen, and for any $t\geq 0$, define $I_n(t) := \biginner{f, \mu_n(t)} = \dfrac{1}{n} \dsum_{i=1}^n f\big(x^n_i(t), \sigma^n_i(t)\big)$. Observe that for $0\leq s\leq t$,
    \begin{align*}
        \abs*{I_n(t) - I_n(s)} &= \dfrac{1}{n} \abs*{\dsum_{i=1}^n \Bigsqbr{f\big(x^n_i(t), \sigma^n_i(t)\big) - f\big(x^n_i(s), \sigma^n_i(s)\big)}} \\
        &\leq \dfrac{1}{n} \dsum_{i=1}^n \Bigabs{f\big(x^n_i(t), \sigma^n_i(t)\big) - f\big(x^n_i(s), \sigma^n_i(s)\big)} \\
        &\leq \dfrac{1}{n} \dsum_{i=1}^n \big(x^n_i(t) - x^n_i(s)\big) + \dfrac{1}{n} \dsum_{i=1}^n \ind_{\{\sigma^n_i(t) \neq \sigma^n_i(s)\}},
    \end{align*}
    where the first inequality on the second line follows from the Triangle Inequality, while the second inequality follows from our assumption that $f$ is $1$-Lipschitz. Also, since the inequality above holds for any $f\in\HHH$, we have
    \begin{align}
    \label{wassup}
        \WWW_1\big(\mu_n(s), \mu_n(t)\big) = \sup_{f\in\HHH} \abs*{I_n(t) - I_n(s)} \leq \dfrac{1}{n} \dsum_{i=1}^n \big(x^n_i(t) - x^n_i(s)\big) + \dfrac{1}{n} \dsum_{i=1}^n \ind_{\{\sigma^n_i(t) \neq \sigma^n_i(s)\}}.
    \end{align}
    Now, let $\varepsilon > 0$ be chosen. From \eqref{wassup}, for any $T \geq 0$ and any $\delta > 0$,
    \begin{align}
    \label{giantineq1}
        &\prob\parens*{\sup_{\substack{0\leq s\leq t\leq T \\ 0< t-s<\delta}} \WWW_1\big(\mu_n(s), \mu_n(t)\big) \geq \varepsilon} \nonumber \\
        &\qquad \leq \prob\parens*{\sup_{\substack{0\leq s\leq t\leq T \\ 0< t-s<\delta}} \dfrac{1}{n} \dsum_{i=1}^n \big(x^n_i(t) - x^n_i(s)\big) \geq \dfrac{\varepsilon}{2}} + \prob\parens*{\sup_{\substack{0\leq s\leq t\leq T \\ 0< t-s<\delta}} \dfrac{1}{n} \dsum_{i=1}^n \ind_{\{\sigma^n_i(t) \neq \sigma^n_i(s)\}} \geq \dfrac{\varepsilon}{2}}.
    \end{align}
    We treat each term on the right-hand side of \eqref{giantineq1} separately. First, since the system is a non-decreasing pure-jump process,
      \begin{align}
    \label{jumpBound}
        \prob\parens*{\sup_{\substack{0\leq s\leq t\leq T \\ 0< t-s<\delta}} \dfrac{1}{n} \dsum_{i=1}^n \big(x^n_i(t) - x^n_i(s)\big) \geq \dfrac{\varepsilon}{2}} \leq \prob\parens*{\sup_{s\in [0, T]} \dfrac{1}{n} \dsum_{i=1}^n \big(x^n_i(s+2\delta) - x^n_i(s)\big) \geq \dfrac{\varepsilon}{2}}.
    \end{align}
    To bound the probability on the right-hand side of \eqref{jumpBound}, we construct a relevant auxiliary particle system, $\curlbr*{\tilde{x}_i(\cdot)\!: i\in [n]}$, in which the particles are \textbf{of a single type}, as follows:
    \begin{itemize}
        \item For each $i\in [n]$, let $\tilde{x}_i(0) := x^n_i(0)$.

        \item All particles jump with rate $v := v_+ + v_-$, and the jump length is a random variable from the jump length distribution, $J$. Here, if $v = 0$, then every term controlling the spatial location vanishes, so suppose that $v > 0$.
    \end{itemize}
    This auxiliary system has two monotonicity properties, both of which are to our advantage:
    \begin{itemize}
        \item First, for any $i\in [n]$ and $t\geq 0$, $\tilde{x}_i(t) \geq x^n_i(t)$.

        \item Second, for any $i\in [n]$ and $0\leq s\leq t$, $\tilde{x}_i(t) - \tilde{x}_i(s) \geq x^n_i(t) - x^n_i(s)$.
    \end{itemize}

    In addition, conditional on the initial configuration, the jump mechanisms of the particles in this system are mutually independent. With that, define $\Bar{Y}_n(t) := \dfrac{1}{n} \dsum_{i=1}^n\!\tilde{x}_i(t)$. By the monotonicity of $\curlbr*{\tilde{x}_i(\cdot)\!: i\!\in\![n]}$, for $\delta < \min\curlbr*{\dfrac{\varepsilon}{8v}, 1}$,
    \begin{align*}
        &\prob\parens*{\sup_{s\in [0, T]} \dfrac{1}{n} \dsum_{i=1}^n \bigparens{x^n_i(s+2\delta) - x^n_i(s)} \geq \dfrac{\varepsilon}{2}} \\
        &\qquad \leq \prob\parens*{\sup_{s\in [0, T]} \dfrac{1}{n} \dsum_{i=1}^n \bigparens{\tilde{x}_i(s+2\delta) - \tilde{x}_i(s)} \geq \dfrac{\varepsilon}{2}} \\
        &\qquad = \prob\parens*{\sup_{s\in [0, T]} \bigparens{\bar{Y}_n(s+2\delta) - \bar{Y}_n(s)} \geq \dfrac{\varepsilon}{2}} \\
        &\qquad = \prob\parens*{\sup_{s\in [0, T]} \sqbr*{\bigparens{\bar{Y}_n(s + 2\delta) - v(s + 2\delta)} -\!\bigparens{\bar{Y}_n(s) - vs}} \geq \dfrac{\varepsilon}{2} -\!2v\delta } \\
        &\qquad \leq \prob\parens*{\sup_{s\in [0, T]} \sqbr*{\bigparens{\bar{Y}_n(s+2\delta)  - v(s+2\delta)} - \bigparens{\bar{Y}_n(s) -vs}} \geq \dfrac{\varepsilon}{4}}.
    \end{align*}
    Now, for each $n\in\nat$ and $t\geq 0$, let
    \begin{align*}
        K_n(t) := \bar{Y}_n(t) - \bar{Y}_n(0) - vt
    \end{align*}
    so that $\curlbr*{K_n(t)\!: t\geq 0}$ is a martingale (in $t$, with respect to the natural filtration), and observe that
      \begin{align*}
        &\prob\parens*{\sup_{s\in [0, T]} \sqbr*{\bigparens{\bar{Y}_n(s + 2\delta) - v(s + 2\delta)} - \bigparens{\bar{Y}_n(s) - vs}} \geq \dfrac{\varepsilon}{4}} \\
        &\qquad = \prob\parens*{\sup_{s\in [0, T]} \bigparens{K_n(s+2\delta)-K_n(s)}\geq \dfrac{\varepsilon}{4}} \\
        &\qquad \leq \prob\parens*{\sup_{s\in [0, T]} \abs*{K_n(s+2\delta)} + \sup_{s\in [0, T]} \abs*{K_n(s)} \geq \dfrac{\varepsilon}{4}} \\
        &\qquad \leq \prob\parens*{\sup_{s\in [0, T]} \abs*{K_n(s+2\delta)} \geq  \dfrac{\varepsilon}{8}} + \prob\parens*{\sup_{s\in [0, T]} \abs*{K_n(s)} \geq \dfrac{\varepsilon}{8}}.
    \end{align*}
    Since $\curlbr*{K_n(t)\!: t\geq 0}$ is a martingale, by Doob's $\LLL^2$ Inequality,
    \begin{align*}
        \prob\parens*{\sup_{t\in [0, T+2]} \abs*{K_n(t)} \geq \dfrac{\varepsilon}{8}} \leq \dfrac{64\expc\sqbr*{K^2_n(T+2)}}{\varepsilon\displaystyle^2},
    \end{align*}
    and note that since $\mathbb{E}(Z^2) < \infty$, $\expc\sqbr*{K^2_n(T+2)} = \dfrac{v\cdot(T+2)\expc\parens*{Z\displaystyle^2}}{n} \toinf{n} 0$; thus,
    \begin{align*}
        \lim_{\delta \to 0} \limsup_{n\to \infty} \prob\parens*{\sup_{\substack{0\leq s\leq t\leq T \\ 0< t-s<\delta}} \dfrac{1}{n} \dsum_{i=1}^n \bigparens{x^n_i(t) - x^n_i(s)} \geq \dfrac{\varepsilon}{2}} = 0.
    \end{align*}
    Now, for the remaining term on the right-hand side of \eqref{giantineq1}, observe that
    \begin{align*}
        \prob\parens*{\sup_{\substack{0\leq s\leq t\leq T \\ 0< t-s<\delta}} \dfrac{1}{n} \dsum_{i=1}^n \ind_{\{\sigma^n_i(t) \neq \sigma^n_i(s)\}} \geq \dfrac{\varepsilon}{2}} \leq \prob\parens*{\sup_{\substack{0\leq s\leq t\leq T \\ 0< t-s<\delta}} \dfrac{1}{n} \dsum_{i=1}^n W_i(s, t) \geq \dfrac{\varepsilon}{2}},
    \end{align*}
    where $W_i(s, t)$ is the number of times particle $i$ changes type within the interval $(s, t]$. Clearly, by monotonicity,
    \begin{align*}
        \prob\parens*{\sup_{\substack{0\leq s\leq t\leq T \\ 0< t-s<\delta}} \dfrac{1}{n} \dsum_{i=1}^n W_i(s, t) \geq \dfrac{\varepsilon}{2}} \leq \prob\parens*{\sup_{s\in [0, T]} \dfrac{1}{n} \dsum_{i=1}^n W_i(s, s+2\delta) \geq \dfrac{\varepsilon}{2}}.
    \end{align*}
    Since the rate of change of type for any particle is bounded above by $\alpha := \alpha^+ + \alpha^- + \lnorm*{\varphi}_{\infty} < \infty$, for (independent and identically distributed) Poisson processes $\{N_i(\cdot) : i \in [n]\}$ with rate $\alpha$, we have that
    \begin{align*}
        \prob\parens*{\sup_{s\in [0, T]} \dfrac{1}{n} \dsum_{i=1}^n W_i(s, s+2\delta) \geq \dfrac{\varepsilon}{2}} \leq \prob\parens*{\sup_{s\in [0, T]} \dfrac{1}{n} \dsum_{i=1}^n \bigparens{N_i(s+2\delta) - N_i(s)} \geq \dfrac{\varepsilon}{2}}.
    \end{align*}
    For each $n\in\nat$ and $t \geq 0$, let $K'_n(t) := \frac{1}{n}\dsum_{i=1}^n N_i(t) - \alpha t$ so that $\curlbr*{K'_n(t)\!: t\geq 0}$ is also a martingale (in $t$, with respect to the natural filtration). Then for $\delta < \dfrac{\varepsilon}{8\alpha}$,
      \begin{align*}
        \prob\parens*{\sup_{s\in [0, T]} \dfrac{1}{n} \dsum_{i=1}^n \bigparens{N_i(s+2\delta) - N_i(s)} \geq \dfrac{\varepsilon}{2}} &= \prob\parens*{\sup_{s\in [0, T]} \bigparens{K'_n(s+2\delta) - K'_n(s)} \geq \dfrac{\varepsilon}{2} - 2\alpha \delta} \\
        &\leq \prob\parens*{\sup_{s\in [0, T]} \bigparens{K'_n(s+2\delta) - K'_n(s)} \geq \dfrac{\varepsilon}{4}} \\
        &\leq \prob\parens*{\sup_{s\in [0, T]} \abs*{K_n'(s\!+\!2\delta)} \geq \dfrac{\varepsilon}{8}}\!+\!\prob\parens*{\sup_{s\in [0, T]} \abs*{K'_n(s)} \geq \dfrac{\varepsilon}{8}}.
    \end{align*}
    Again, since $\curlbr*{K'_n(t)\!: t\geq 0}$ is a martingale (in $t$, with respect to the natural filtration), by a parallel argument to that above using Doob's $\LLL^2$ Inequality to  $\curlbr*{K'_n(t)\!: t\geq 0}$, we also have
      \begin{align*}
        \lim_{\delta \to 0} \limsup_{n\to \infty} \prob\parens*{\sup_{\substack{0\leq s\leq t\leq T \\ 0< t-s<\delta}} \dfrac{1}{n} \dsum_{i=1}^n \ind_{\{\sigma^n_i(t) \neq \sigma^n_i(s)\}} \geq \dfrac{\varepsilon}{2}} = 0,
     \end{align*}
    and our result follows.
\end{proof}
 The next step is to show that our sequence of measures satisfies a certain form of \emph{uniform integrability}:
\begin{theorem}
\label{tight2}
    Suppose that Assumptions \ref{assumption0} and \ref{assumption2} hold. Then for any $t > 0$ and any $\eta > 0$,
      \begin{align*}
        \lim_{L\to\infty} \limsup_{n\to\infty} \prob\parens*{\int \abs*{x} \ind_{\curlbr*{|x| \geq L}} \mu^\circ_n(t, \di x) \geq \eta} = \lim_{L\to\infty} \limsup_{n\to\infty} \prob\parens*{\dfrac{1}{n}\dsum_{i=1}^n |x^n_i(t)| \ind_{\curlbr*{|x^n_i(t)| \geq L}} \geq \eta} = 0.
    \end{align*}
\end{theorem}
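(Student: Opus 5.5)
We prove the statement by writing $x^n_i(t) = x^n_i(0) + D_i(t)$, where $D_i(t) := x^n_i(t) - x^n_i(0) \geq 0$ is the displacement of particle $i$. The initial positions are controlled by Assumption \ref{assumption2}. The displacements are controlled by the same single-type auxiliary system $\curlbr*{\tilde{x}_i(\cdot)}$ used in the proof of Theorem \ref{firstBigThm}. In that system every particle jumps at rate $v = v_+ + v_-$ with jumps drawn from $J$, so $0 \leq D_i(t) \leq \tilde{D}_i(t) := \tilde{x}_i(t) - \tilde{x}_i(0)$. Conditional on the initial configuration, the $\tilde{D}_i(t)$ are i.i.d. compound Poisson variables with mean $vt$ and second moment $vt\,\expc(Z^2) + (vt)^2 < \infty$, by Assumption \ref{assumption0}. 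We may assume $v > 0$; otherwise $D_i \equiv 0$.

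The key elementary inequality is as follows. For $a \in \real$ and $b \geq 0$ with $|a+b| \geq L$, either $|a| \geq L/2$ or $b \geq L/2$. Hence
\begin{align*}
    |a+b|\ind_{\{|a+b|\geq L\}} \leq 2|a|\ind_{\{|a|\geq L/2\}} + 2b\ind_{\{b\geq L/2\}}.
\end{align*}
To verify this, consider the two cases. If $|a| \geq L/2$ and $b < L/2$, then $|a+b| \leq |a| + b \leq 2|a|$. The case $b \geq L/2$, $|a| < L/2$ is symmetric, and if both are large the bound is clear. Applying the inequality with $a = x^n_i(0)$ and $b = D_i(t) \leq \tilde{D}_i(t)$, and using that $b \mapsto b\ind_{\{b \geq L/2\}}$ is non-decreasing, gives
\begin{align*}
    \dfrac{1}{n}\dsum_{i=1}^n |x^n_i(t)|\ind_{\{|x^n_i(t)|\geq L\}} \leq \dfrac{2}{n}\dsum_{i=1}^n |x^n_i(0)|\ind_{\{|x^n_i(0)|\geq L/2\}} + \dfrac{2}{n}\dsum_{i=1}^n \tilde{D}_i(t)\ind_{\{\tilde{D}_i(t)\geq L/2\}}.
\end{align*}
The probability that the left side is at least $\eta$ is therefore bounded by the sum of the probabilities that each term on the right is at least $\eta/2$.

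For the first term, Assumption \ref{assumption2} (with $L/2$ in place of $L$ and $\eta/4$ in place of $\eta$) shows that its $\lim_{L}\limsup_n$ vanishes.

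For the second term, we use Markov's inequality. Because the $\tilde{D}_i(t)$ are identically distributed, with a law not depending on $n$ or on the initial configuration,
\begin{align*}
    \prob\parens*{\dfrac{2}{n}\dsum_{i=1}^n \tilde{D}_i(t)\ind_{\{\tilde{D}_i(t)\geq L/2\}} \geq \dfrac{\eta}{2}} \leq \dfrac{4}{\eta}\expc\bigsqbr{\tilde{D}_1(t)\ind_{\{\tilde{D}_1(t)\geq L/2\}}} \leq \dfrac{8}{\eta L}\expc\bigsqbr{\tilde{D}_1(t)^2}.
\end{align*}
This bound is uniform in $n$ and tends to $0$ as $L \to \infty$; only integrability of $\tilde{D}_1(t)$ is actually needed here. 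Combining the two terms proves the theorem.

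The main care point is the coupling step. It must give the pathwise domination $D_i(t) \leq \tilde{D}_i(t)$ simultaneously for all $i$, with the $\tilde{D}_i(t)$ having a common law. This can be arranged by giving each particle an independent Poisson clock of rate $v$ with i.i.d. marks from $J$, and letting the true particle use a jump only when it is accepted via an independent thinning with probability $v_{\sigma}/v$. Since jumps are non-negative, discarding some of them can only decrease the displacement, which gives the domination. Everything else is routine.
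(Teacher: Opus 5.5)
Your argument is correct, and it is simpler than the paper's proof. The paper bounds $|x^n_i(t)|\ind_{\{|x^n_i(t)|\geq L\}}$ by the product $\bigl(|x^n_i(0)|+T_i(t)\bigr)\bigl(\ind_{\{|x^n_i(0)|\geq L/2\}}+\ind_{\{T_i(t)\geq L/2\}}\bigr)$ and expands it into four terms. The two diagonal terms are handled essentially as you handle yours. The two cross terms need extra work:
\begin{itemize}
\item $\frac1n\sum_i T_i(t)\ind_{\{|x^n_i(0)|\geq L/2\}}$ is controlled by conditioning on $\FFF(0)$ and using Assumption \ref{assumption2} a second time, to show that the fraction of particles with $|x^n_i(0)|\geq L/2$ is small;
\item $\frac1n\sum_i |x^n_i(0)|\ind_{\{T_i(t)\geq L/2\}}$ is controlled by the conditional Markov bound $\frac{2vt}{L}\III_n(0)$ together with a truncation on $\{\III_n(0)\leq K\}$.
\end{itemize}

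Your inequality $|a+b|\ind_{\{|a+b|\geq L\}}\leq 2|a|\ind_{\{|a|\geq L/2\}}+2b\ind_{\{b\geq L/2\}}$ for $b\geq 0$ shows that these cross terms are dominated by the diagonal ones: on $\{|a|\geq L/2>b\}$ one has $b\leq|a|$, and symmetrically. So all of that conditioning and truncation disappears. You invoke Assumption \ref{assumption2} once, and otherwise only the uniform integrability of the single $n$-independent variable $\tilde D_1(t)$. That needs only $\expc(Z)<\infty$ and the common marginal law, not independence from $\FFF(0)$ or across particles. The only thing the paper's expansion avoids is your harmless factor $2$, which is irrelevant for a $\lim_{L}\limsup_{n}$ statement.

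Your explicit thinning construction of the coupling is also a welcome addition, since the paper only asserts the domination $T_i(t)\leq\tilde T_i(t)$. Both arguments rely on $Z\geq 0$, which the paper takes for granted as part of the model but which is not contained in Assumptions \ref{assumption0} and \ref{assumption2}.
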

\begin{proof}
    To begin, we re-employ one of the auxiliary particle systems we used in the proof of Theorem \ref{firstBigThm}. Recall the particle system $\curlbr*{\tilde{x}_i(\cdot)\!: i\in [n]}$, and let $t > 0$ be chosen. If $v = 0$, then the result follows immediately from Assumption \ref{assumption2}, so suppose that $v > 0$. For each $i\in [n]$, let
      \begin{align*}
        T_i(t) := x^n_i(t) - x^n_i(0) & & \text{and} & & \tilde{T}_i(t) := \tilde{x}_i(t) - \tilde{x}_i(0).
    \end{align*}
     Observe that $T_i(t) \leq \tilde{T}_i(t)$. Also, let $N_i(t)$ and $\tilde{N}_i(t)$ denote the number of jumps that particle $i$ makes up to time $t$ in its respective system. Then since $\tilde{N}_i(t) \sim \pois(vt)$, we have
      \begin{align*}
        \expc\sqbr*{\tilde{T}_i(t)} = \expc\sqbr*{\tilde{N}_i(t)} \expc(Z) = vt < \infty,
     \end{align*}
     and by Markov's Inequality,
    \begin{align*}
        \prob\parens*{\tilde{T}_i(t) \geq \dfrac{L}{2}} \leq \dfrac{2\expc\sqbr*{\tilde{T}_i(t)}}{L} \toinf{L} 0.
    \end{align*}
    Naturally, since $\tilde{T}_i(t)$ is integrable,
    \begin{align}
    \label{someUIthingy}
        \expc\sqbr*{\tilde{T}_i(t) \ind_{\curlbr*{\tilde{T}_i(t) \geq L/2}}} \toinf{L} 0,
    \end{align}
    and since $\ind_{\curlbr*{|x^n_i(t)| \geq L}} \leq \ind_{\curlbr*{|x^n_i(0)| \geq L/2}} + \ind_{\curlbr*{T_i(t) \geq L/2}}$, and $|x^n_i(t)| \leq |x^n_i(0)| + T_i(t)$, we have
      \begin{align*}
        |x^n_i(t)| \ind_{\curlbr*{|x^n_i(t)| \geq L}} &\leq |x^n_i(0)| \ind_{\curlbr*{|x^n_i(0)| \geq L/2}} + T_i(t) \ind_{\curlbr*{|x^n_i(0)| \geq L/2}} \\
        &\qquad + |x^n_i(0)| \ind_{\curlbr*{T_i(t) \geq L/2}} + T_i(t) \ind_{\curlbr*{T_i(t) \geq L/2}},
     \end{align*}
    and so
    \begin{align*}
        \dfrac{1}{n} \dsum_{i=1}^n |x^n_i(t)| \ind_{\curlbr*{|x^n_i(t)| \geq L}} &\leq \dfrac{1}{n} \dsum_{i=1}^n|x^n_i(0)| \ind_{\curlbr*{|x^n_i(0)| \geq L/2}} + \dfrac{1}{n} \dsum_{i=1}^n T_i(t) \ind_{\curlbr*{|x^n_i(0)| \geq L/2}} \\
        &+ \dfrac{1}{n} \dsum_{i=1}^n |x^n_i(0)| \ind_{\curlbr*{T_i(t) \geq L/2}} + \dfrac{1}{n} \dsum_{i=1}^n T_i(t) \ind_{\curlbr*{T_i(t) \geq L/2}}.
    \end{align*}
    Let $\eta > 0$ be chosen. By the union bound,
      \begin{align}
      \label{eta4}
        &\prob\parens*{\dfrac{1}{n} \dsum_{i=1}^n |x^n_i(t)| \ind_{\curlbr*{|x^n_i(t)| \geq L}} \geq \eta} \nonumber \\
        &\leq \prob\parens*{\dfrac{1}{n} \dsum_{i=1}^n|x^n_i(0)| \ind_{\curlbr*{|x^n_i(0)| \geq L/2}} \geq \dfrac{\eta}{4}} + \prob\parens*{\dfrac{1}{n} \dsum_{i=1}^n T_i(t) \ind_{\curlbr*{|x^n_i(0)| \geq L/2}} \geq \dfrac{\eta}{4}} \nonumber \\
        &\qquad + \prob\parens*{\dfrac{1}{n} \dsum_{i=1}^n |x^n_i(0)| \ind_{\curlbr*{T_i(t) \geq L/2}} \geq \dfrac{\eta}{4}} + \prob\parens*{\dfrac{1}{n} \dsum_{i=1}^n T_i(t) \ind_{\curlbr*{T_i(t) \geq L/2}} \geq \dfrac{\eta}{4}}.
     \end{align}
    Our goal now is to bound each of the four terms on the right-hand side of \eqref{eta4} as both $n$ and $L$ grow. First, from Assumption \ref{assumption2}, we know that
      \begin{align*}
        \lim_{L\to\infty} \limsup_{n\to\infty} \prob\parens*{\dfrac{1}{n} \dsum_{i=1}^n|x^n_i(0)| \ind_{\curlbr*{|x^n_i(0)| \geq L/2}} \geq \dfrac{\eta}{4}} = 0,
      \end{align*}
     and the first term on the right-hand side of \eqref{eta4} is bounded. Next, since $T_i(t) \leq \tilde{T}_i(t)$, we have
    \begin{align*}
        \dfrac{1}{n} \dsum_{i=1}^n T_i(t) \ind_{\curlbr*{T_i(t) \geq L/2}} \leq \dfrac{1}{n} \dsum_{i=1}^n \tilde{T}_i(t) \ind_{\curlbr*{\tilde{T}_i(t) \geq L/2}},
    \end{align*}
    and taking expectation yields
    \begin{align*}
        \expc\sqbr*{\dfrac{1}{n} \dsum_{i=1}^n T_i(t) \ind_{\curlbr*{T_i(t) \geq L/2}}} &\leq \expc\sqbr*{\dfrac{1}{n} \dsum_{i=1}^n \tilde{T}_i(t) \ind_{\curlbr*{\tilde{T}_i(t) \geq L/2}}} = \expc\sqbr*{\tilde{T}_1(t) \ind_{\curlbr*{\tilde{T}_1(t) \geq L/2}}},
    \end{align*}
    where the last equality follows from the fact that the $\tilde{T}_i(t)$'s are independent and identically distributed. Since this bound is uniform in $n$, by Markov's Inequality  and from \eqref{someUIthingy},
    \begin{align*}
        \sup_{n\in\nat} \prob\parens*{\dfrac{1}{n} \dsum_{i=1}^n T_i(t) \ind_{\curlbr*{T_i(t) \geq L/2}} \geq \dfrac{\eta}{4}} \leq \dfrac{4\expc\sqbr*{\tilde{T}_1(t) \ind_{\curlbr*{\tilde{T}_1(t) \geq L/2}}}}{\eta} \toinf{L} 0.
    \end{align*}
    With that, the last term on the right-hand side of \eqref{eta4} is   also bounded. For the second term on the right-hand side of \eqref{eta4}, observe that
      \begin{align*}
        \prob\parens*{\dfrac{1}{n} \dsum_{i=1}^n T_i(t) \ind_{\curlbr*{|x^n_i(0)| \geq L/2}}\geq\dfrac{\eta}{4}} & \leq \prob \parens*{\dfrac{1}{n} \dsum_{i=1}^n \tilde{T}_i(t) \ind_{\curlbr*{|x^n_i(0)| \geq L/2}} \geq \dfrac{\eta}{4}} \\
        &\leq \dfrac{4}{\eta} \expc\sqbr*{\dfrac{1}{n} \dsum_{i=1}^n \tilde{T}_i(t) \ind_{\curlbr*{|x^n_i(0)| \geq L/2}}} \leq \dfrac{4vt}{\eta} \expc\sqbr*{\dfrac{1}{n} \dsum_{i=1}^n \ind_{\curlbr*{|x^n_i(0)| \geq L/2}}}.
     \end{align*}
     Now, for any $\delta \in (0, 1)$, we have
      \begin{align*}
        &\expc\sqbr*{\dfrac{1}{n} \dsum_{i=1}^n \ind_{\curlbr*{|x^n_i(0)| \geq L/2}}} \\
        &\qquad \leq \delta + \prob\parens*{\dfrac{1}{n} \dsum_{i=1}^n \ind_{\curlbr*{|x^n_i(0)| \geq L/2}} \geq \delta} \leq \delta + \prob\parens*{\dfrac{1}{n} \dsum_{i=1}^n |x^n_i(0)|\ind_{\curlbr*{|x^n_i(0)| \geq L/2}} \geq \dfrac{L\delta}{2}},
    \end{align*}
    and by Assumption \ref{assumption2} (again),
    \begin{align*}
        \lim_{L\to\infty} \limsup_{n\to\infty} \prob\parens*{\dfrac{1}{n} \dsum_{i=1}^n |x^n_i(0)|\ind_{\curlbr*{|x^n_i(0)| \geq L/2}} \geq \dfrac{L\delta}{2}} = 0.
    \end{align*}
    Let $\varepsilon > 0$ be chosen, and let $\delta \leq \min\curlbr*{\dfrac{\varepsilon \eta}{8vt}, 1}$ be chosen so that $\dfrac{4vt}{\eta}\cdot \delta \leq \dfrac{\varepsilon}{2}$. With this (fixed) $\delta \in (0, 1)$, let $L > 0$ be so large that
    \begin{align*}
        \limsup_{n\to\infty} \prob\parens*{\dfrac{1}{n} \dsum_{i=1}^n |x^n_i(0)|\ind_{\curlbr*{|x^n_i(0)| \geq L/2}} \geq \dfrac{L\delta}{2}} \leq \dfrac{\varepsilon\eta}{8vt}.
    \end{align*}
    Then
      \begin{align*}
        \limsup_{n\to\infty} \prob\parens*{\dfrac{1}{n} \dsum_{i=1}^n T_i(t) \ind_{\curlbr*{|x^n_i(0)| \geq L/2}}\geq\dfrac{\eta}{4}} & \leq \dfrac{4vt}{\eta} \limsup_{n\to\infty} \expc\sqbr*{\dfrac{1}{n} \dsum_{i=1}^n \ind_{\curlbr*{|x^n_i(0)| \geq L/2}}} \\
        &\leq \dfrac{4vt}{\eta} \sqbr*{\delta\!+\!\limsup_{n\to\infty} \prob\parens*{\dfrac{1}{n}\!\dsum_{i=1}^n |x^n_i(0)|\ind_{\curlbr*{|x^n_i(0)| \geq L/2}} \geq \dfrac{L\delta}{2}}} \\
        &\leq \dfrac{4vt}{\eta} \cdot \delta + \dfrac{4vt}{\eta} \cdot \dfrac{\varepsilon\eta}{8vt} \leq \dfrac{\varepsilon}{2} + \dfrac{\varepsilon}{2} = \varepsilon,
     \end{align*}
    and indeed, $\lim_{L\to\infty} \limsup_{n\to\infty} \prob\parens*{\dfrac{1}{n} \dsum_{i=1}^n T_i(t) \ind_{\curlbr*{|x^n_i(0)| \geq L/2}}\geq\dfrac{\eta}{4}} = 0$. Lastly, to bound the remaining (third) term on the right-hand side of \eqref{eta4}, first, note that
    \begin{align*}
        A_n(L) := \dfrac{1}{n} \dsum_{i=1}^n |x^n_i(0)| \ind_{\curlbr*{T_i(t) \geq L/2}} \leq \dfrac{1}{n} \dsum_{i=1}^n |x^n_i(0)| \ind_{\curlbr*{\tilde{T}_i(t) \geq L/2}} =: \tilde{A}_n(L),
    \end{align*}
    so it suffices to show that $\lim_{L\to\infty} \limsup_{n\to\infty} \prob\parens*{\tilde{A}_n(L) \geq \dfrac{\eta}{4}} = 0$. To that end, let
      \begin{align*}
        \mathscr{I}_n(0) := \int |x| \mu^\circ_n(0, \di x) = \dfrac{1}{n} \dsum_{i=1}^n |x^n_i(0)|.
     \end{align*}
      For any $K > 0$, we have
    \begin{align}
    \label{lastEtaBound}
        \prob\parens*{\tilde{A}_n(L) \geq \dfrac{\eta}{4}} \leq \prob\big(\mathscr{I}_n(0) > K\big) + \prob\parens*{\curlbr*{\tilde{A}_n(L) \geq \dfrac{\eta}{4}} \cap \curlbr*{\mathscr{I}_n(0) \leq K}}.
    \end{align}
    The remaining task is to bound each term on the right-hand side of \eqref{lastEtaBound}. First, note that
      \begin{align*}
        \expc\sqbr*{\tilde{A}_n(L)\big|\FFF(0)} = \expc\sqbr*{\dfrac{1}{n} \dsum_{i=1}^n |x^n_i(0)| \ind_{\curlbr*{\tilde{T}_i(t) \geq L/2}}\Bigg|\FFF(0)} & = \prob\parens*{\tilde{T}_i(t) \geq \dfrac{L}{2}\Bigg|\FFF(0)} \cdot \dfrac{1}{n} \dsum_{i=1}^n |x^n_i(0)| \\
        &= \prob\parens*{\tilde{T}_i(t) \geq \dfrac{L}{2}} \cdot \mathscr{I}_n(0) \\
        & \leq \dfrac{2}{L} \expc\sqbr*{\tilde{T}_i(t)} \cdot \III_n(0) = \dfrac{2vt}{L} \cdot \III_n(0),
     \end{align*}
    where the inequality on the last line follows from Markov's Inequality. Next, using the (conditional) Markov bound, we have
    \begin{align*}
        \prob\parens*{\curlbr*{\tilde{A}_n(L) \geq \dfrac{\eta}{4}} \cap \curlbr*{\mathscr{I}_n(0) \leq K}} &= \expc\sqbr*{\ind_{\curlbr*{\tilde{A}_n(L) \geq \eta/4}} \ind_{\curlbr*{\III_n(0) \leq K}}} \\
        &= \expc\sqbr*{\ind_{\curlbr*{\III_n(0) \leq K}} \prob\parens*{\tilde{A}_n(L) \geq \dfrac{\eta}{4} \Bigg|\FFF(0)}} \\
        &\leq \dfrac{4}{\eta} \expc\Big[\ind_{\curlbr*{\III_n(0) \leq K}} \expc\sqbr*{\tilde{A}_n(L)\big|\FFF(0)}\Big] \\
        &\leq \dfrac{4}{\eta} \expc\Big[\ind_{\curlbr*{\III_n(0) \leq K}} \dfrac{2vt}{L} \cdot \III_n(0) \Big] \leq \dfrac{8Kvt}{L \eta}.
    \end{align*}
    To bound the remaining term in \eqref{lastEtaBound}, let $\varepsilon > 0$ be chosen, and note that for any $K > 0$,
    \begin{align*}
        \III_n(0) = \int |x| \mu^\circ_n(0, \di x) &= \int |x| \ind_{\curlbr*{|x| < K/2}} \mu^\circ_n(0, \di x) + \int |x| \ind_{\curlbr*{|x| \geq K/2}} \mu^\circ_n(0, \di x) \\
        &\leq \dfrac{K}{2} + \int |x| \ind_{\curlbr*{|x| \geq K/2}} \mu^\circ_n(0, \di x);
    \end{align*}
    thus,
    \begin{align*}
        \prob\big(\III_n(0) > K\big) \leq \prob\parens*{\int |x| \ind_{\curlbr*{|x| \geq K/2}} \mu^\circ_n(0, \di x) > \dfrac{K}{2}},
    \end{align*}
    so by Assumption \ref{assumption2}, we have $\lim_{K\to\infty} \limsup_{n\to\infty} \prob\big(\III_n(0) > K\big) = 0$. Next, let $K > 0$ be   chosen so that $\limsup_{n\to\infty} \prob\big(\III_n(0) > K\big) < \dfrac{\varepsilon}{2}$, and with this (fixed) $K > 0$, let $L > 0$ be chosen so that $\dfrac{8Kvt}{L \eta} < \dfrac{\varepsilon}{2}$. Then by \eqref{lastEtaBound},
      \begin{align*}
        \limsup_{n\to\infty} \prob\parens*{\tilde{A}_n(L) \geq \dfrac{\eta}{4}} &\leq \limsup_{n\to\infty} \prob\big(\mathscr{I}_n(0) > K\big) + \limsup_{n\to\infty} \prob\parens*{\curlbr*{\tilde{A}_n(L) \geq \dfrac{\eta}{4}} \cap \curlbr*{\mathscr{I}_n(0) \leq K}} \\
        &\leq \dfrac{\varepsilon}{2} + \dfrac{\varepsilon}{2} = \varepsilon.
    \end{align*}
    That gives $\lim_{L\to\infty} \limsup_{n\to\infty} \prob\parens*{\tilde{A}_n(L) \geq \dfrac{\eta}{4}} = 0$, and with that, 
    \begin{align*}
        \lim_{L\to\infty} \limsup_{n\to\infty} \prob\parens*{A_n(L) \geq \dfrac{\eta}{4}} = 0
    \end{align*}
    as well. Our proof is complete.
\end{proof}
To tie the previous two results together, we have the following:
\begin{corollary}
    \label{tightD}
    Suppose that Assumptions \ref{assumption0} and \ref{assumption2} hold. Then the sequence $\curlbr*{\mu_n(\cdot)\!: n\in\nat}$ is $\CCC$-tight in the space $\DDD\bigparens{[0, \infty), \PPP_1(\real\times \{+,-\})}$.
\end{corollary}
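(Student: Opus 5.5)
We will apply Theorem 3.7.2 of \cite{EK86} together with its $\CCC$-tightness refinement (Proposition 3.10.2 there). Theorem 3.7.2 says that relative compactness of $\curlbr*{\mu_n(\cdot)}$ in $\DDD\bigparens{[0,\infty),\PPP_1(\real\times\{+,-\})}$ follows from two conditions. The first is a compact containment condition: for each $\varepsilon>0$ and each rational $t\ge 0$ there is a compact $\Gamma_{\varepsilon,t}\subset\PPP_1(\real\times\{+,-\})$ with $\liminf_n\prob\bigparens{\mu_n(t)\in\Gamma_{\varepsilon,t}}\ge 1-\varepsilon$. The second is a modulus-of-continuity condition: for each $T$,
\begin{align*}
\lim_{\delta\to0}\limsup_{n\to\infty}\expc\sqbr*{\min\curlbr*{1, w'(\mu_n,\delta,T)}}=0 .
\end{align*}
The Wasserstein space over the Polish space $\real\times\{+,-\}$ is itself complete and separable, so the theorem applies.

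\emph{Modulus condition.} This comes directly from Theorem \ref{firstBigThm}. The modified modulus $w'$ is bounded by the ordinary modulus $w(\mu_n,2\delta,T)=\sup\curlbr*{\WWW_1(\mu_n(s),\mu_n(t)) : |t-s|<2\delta,\ s,t\le T}$. Theorem \ref{firstBigThm} shows this tends to zero in probability in the $\lim_\delta\limsup_n$ sense, and by boundedness of $\min\{1,\cdot\}$ the expectation condition follows. The same statement for the ordinary modulus $w$ is exactly the criterion for $\CCC$-tightness, since it forces every limit point to have continuous paths. Thus $\CCC$-tightness follows once compact containment is established.

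\emph{Compact containment.} We use the characterization of relatively compact sets in $\PPP_1$: a set $\Gamma$ is relatively compact if it is tight and uniformly integrable in the first moment. Concretely, for sequences $L_k\uparrow\infty$ and $\eta_k\downarrow 0$ we take
\begin{align*}
\Gamma=\curlbr*{\nu : \int |x|\ind_{\{|x|\ge L_k\}}\nu^\circ(\di x)\le\eta_k\ \forall k}.
\end{align*}
Because the type coordinate lives in a finite set, this set is closed under $\WWW_1$ and hence compact. It is uniformly integrable by construction, and it is tight because $\nu^\circ(|x|\ge L_k)\le\eta_k/L_k$.

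Fix $t$ and $\varepsilon$. Using Theorem \ref{tight2} (or Assumption \ref{assumption2} when $t=0$), choose $L_k$ so that
\begin{align*}
\limsup_n\prob\parens*{\int|x|\ind_{\{|x|\ge L_k\}}\mu_n^\circ(t,\di x)\ge \eta_k}\le \varepsilon 2^{-k-1}.
\end{align*}
Here $\eta_k=1/k$. A union bound then gives $\limsup_n\prob(\mu_n(t)\notin\Gamma)\le\varepsilon$.

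\emph{Main obstacle.} The one delicate point is the $\limsup_n$ in the union bound over countably many $k$, since the $\limsup$ of a sum need not be controlled by the sum of the $\limsup$s. To avoid this, truncate at finitely many $k$, namely $k\le k_0$, and handle all large $k$ deterministically. Only finitely many $n$ are then problematic, and each of these has $\mu_n(t)$ almost surely in $\PPP_1$, so a single measure is tight and uniformly integrable. One can therefore enlarge $L_k$ so that the bounds hold uniformly in $n$ rather than only in the $\limsup$. This completes the verification of both conditions of Theorem 3.7.2, and hence the $\CCC$-tightness.
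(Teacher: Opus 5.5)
This is essentially the paper's argument: the paper's proof is just the appeal to Theorem 3.7.2 of \cite{EK86}, with Theorem \ref{firstBigThm} supplying the modulus condition and Theorem \ref{tight2} (together with Assumption \ref{assumption2} at $t=0$) supplying compact containment, and your write-up fills in those details.

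Two of those details need small repairs. First, the set $\Gamma$ built with $\ind_{\{|x|\ge L_k\}}$ is not $\WWW_1$-closed: mass just inside $\{|x|<L_k\}$ can converge onto $\{|x|=L_k\}$, and the finiteness of the type set has nothing to do with closedness. However, $\Gamma$ is tight with uniformly integrable first moments, so its closure is compact and can serve as $\Gamma_{\varepsilon,t}$. Second, the ``truncate at $k\le k_0$'' step is unnecessary. For each $k$, take $N_k$ from the $\limsup$ bound, then enlarge $L_k$ so that $\prob\bigl(\max_i|x^n_i(t)|\ge L_k\bigr)\le\varepsilon 2^{-k-1}$ for the finitely many $n<N_k$; on the complementary event the tail integral vanishes. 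This gives bounds uniform in $n$, and a clean union bound over all $k$ then finishes the argument.
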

\begin{proof}
    From Theorem 7.2 in Chapter 3 of \cite[128]{EK86}, the $\CCC$-tightness of $\curlbr*{\mu_n(\cdot)\!: n\in\nat}$ follows directly from Theorem \ref{firstBigThm} and Theorem \ref{tight2}.
\end{proof}

\subsection{Martingale and MVE}
So far, we know that  $\curlbr*{\mu_n(\cdot)\!: n\in\nat}$ is $\CCC$-tight, so every subsequence of $\curlbr*{\mu_n(\cdot)\!: n\in\nat}$ has a further subsequence that converges to a limit in $\CCC\bigparens{[0, \infty), \PPP_1(\real\times \{+,-\})}$. The natural next step is to characterize any such subsequential limits. We start with the following characterization of the ``martingale'' term:
\begin{theorem}
\label{martingale1}
    Suppose that Assumptions \ref{assumption0}, \ref{assumption3}, and \ref{assumption4} hold. For every $t \geq 0$ and $f\in\HHH$,
    \begin{align*}
        \sup_{s\in [0, t]} \abs*{A_{s, f}\big(\mu_n(\cdot)\big)} \cpinf{n} 0,
    \end{align*}
    where, for $t\geq 0$ and $f\in \HHH$, recall that
      \begin{align*}
        A_{t,f}\big(\mu_n(\cdot)\big) = \biginner{f, \mu_n(t)} - \biginner{f, \mu_n(0)} - \int\limits_0^t\! \biginner{\LLL f(\cdot, \mu_n(s)), \mu_n(s)} \di s.
     \end{align*}
\end{theorem}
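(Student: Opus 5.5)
The plan is to identify $A_{t,f}(\mu_n(\cdot))$ with a Dynkin martingale of the $n$-particle Markov process and then show that its quadratic variation is $O(1/n)$. Fix $f\in\HHH$ and set $\ff(\xx,\pmb\sigma):=\frac1n\sum_{i=1}^n f(x_i,\sigma_i)=\inner{f,\mu_n}$. Then
\begin{align*}
M_n(t):=\ff\bigparens{\XX^n(t)}-\ff\bigparens{\XX^n(0)}-\int_0^t \pmb{\LLL}\ff\bigparens{\XX^n(s)}\di s
\end{align*}
is a local martingale with respect to $\curlbr*{\FFF(t)}$. Since the $i$th summand of $\pmb{\LLL}\ff$ involves only coordinate $i$, we get
\begin{align*}
\pmb{\LLL}\ff(\xx,\pmb\sigma)=\frac1n\sum_{i=1}^n\Bigsqbr{v_{\sigma_i}\expc_Z\bigsqbr{f(x_i+Z,\sigma_i)-f(x_i,\sigma_i)}+\alpha_n^{\sigma_i}(\xx,\pmb\sigma)\bigparens{f(x_i,t(\sigma_i))-f(x_i,\sigma_i)}}.
\end{align*}

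\emph{Step 1: ties.} The only difference between $\pmb{\LLL}\ff(\XX^n(s))$ and $\inner{\LLL f(\cdot,\mu_n(s)),\mu_n(s)}$ lies in the switching rates. The rate $\alpha^{\sigma_i}_n$ counts indices $j$ with $j\succ_{\xx} i$, while $\alpha^\pm(x_i,\mu_n(s))$ counts those with $x_j>x_i$. These coincide unless some $j\neq i$ satisfies $x_j=x_i$; note that $j=i$ is excluded in both. By Assumption \ref{assumption3}, the initial configuration is a.s.\ tie-free. By Assumption \ref{assumption4}, each jump has a density, so conditional on the past, the probability that a jumping particle lands exactly on one of the other $n-1$ locations is zero. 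Type flips do not move particles. A countable union over jump events then shows that a.s.\ there are no ties at any time. Hence $A_{t,f}(\mu_n(\cdot))=M_n(t)$ for all $t$, a.s.

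\emph{Step 2: genuine martingale and $L^2$ bound.} Since $f$ may be unbounded, I will localize with stopping times. Integrability then follows because $|f(x,\sigma)|\le |f(0,+)|+|x|+1$ and the positions are dominated by the auxiliary system $\tilde x_i$ from the proof of Theorem \ref{firstBigThm}. Because $\expc Z^2<\infty$, the increments $\tilde T_i(t)$ have finite second moments; to avoid needing moments of the initial positions, I will work conditionally on $\FFF(0)$. The predictable quadratic variation is
\begin{align*}
\inner{M_n}_t=\frac1{n^2}\int_0^t\sum_{i=1}^n\Bigsqbr{v_{\sigma_i}\expc_Z\bigparens{f(x_i+Z,\sigma_i)-f(x_i,\sigma_i)}^2+\alpha^{\sigma_i}_n\bigparens{f(x_i,t(\sigma_i))-f(x_i,\sigma_i)}^2}\di s,
\end{align*}
because distinct particles almost surely never jump or flip simultaneously. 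The Lipschitz property gives $(f(x+Z,\sigma)-f(x,\sigma))^2\le Z^2$ and $(f(x,\mp)-f(x,\pm))^2\le 1$. The switching rates are bounded by $\alpha^++\alpha^-+\lnorm{\varphi}_\infty$. Together these yield the deterministic bound
\begin{align*}
\inner{M_n}_t\le \frac{t}{n}\bigparens{(v_++v_-)\expc Z^2+\alpha^++\alpha^-+\lnorm{\varphi}_\infty}.
\end{align*}
This bound ensures that the localized martingales are $L^2$-bounded, so $M_n$ is a true square-integrable martingale.

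\emph{Step 3: conclusion.} Doob's $L^2$ inequality gives
\begin{align*}
\expc\Bigsqbr{\sup_{s\le t}|M_n(s)|^2}\le 4\,\expc\inner{M_n}_t=O(1/n).
\end{align*}
Chebyshev's inequality then gives $\sup_{s\in[0,t]}|A_{s,f}(\mu_n(\cdot))|\cpinf{n}0$.

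The main obstacle is Step 1: the claim that ties never form. This is where Assumptions \ref{assumption3} and \ref{assumption4} enter, and it must be done carefully, via a union over the a.s.\ countably many jump times together with the conditional absolute continuity of the landing point. If one preferred not to rely on this, an alternative is to bound the discrepancy by $\frac1n\#\curlbr{j:x_j=x_i}$ and show that its time integral vanishes. The no-tie argument is cleaner, though, and I will pursue it first. The localization in Step 2 is routine by comparison.
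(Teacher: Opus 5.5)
Your proposal is correct and follows essentially the same route as the paper: the almost-sure absence of ties (from Assumptions \ref{assumption3} and \ref{assumption4}) identifies $A_{t,f}(\mu_n(\cdot))$ with the Dynkin martingale, whose predictable quadratic variation is deterministically $O(t/n)$, and Doob's $L^2$ inequality finishes the argument. Your localization step and the sharper bound $Z^2$ (in place of the paper's $(Z+1)^2$) are minor refinements; they make explicit the integrability that the paper takes for granted, since $f$ is unbounded and the theorem places no moment assumption on the initial positions.
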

 \begin{remark}
    By Assumption \ref{assumption3}, the initial spatial configuration is almost-surely tie-free. Furthermore, from Assumption \ref{assumption4}, since $J$ admits a density, a spatial jump cannot create a tie with positive conditional probability. Indeed, conditional on the pre-jump configuration, particle $i$ can land at the location of particle $j$ only if its jump length equals the single prescribed value $x^n_j(t^-) - x^n_i(t^-)$, while type flips do not affect particle locations. Thus, by non-explosion and a countable-union argument,
    \begin{align*}
        \prob\bigparens{x^n_i(t) \neq x^n_j(t) \text{ for every } t\geq 0 \text{ and every } i\neq j} = 1.
    \end{align*}
    That implies
    \begin{align*}
        \ind_{\curlbr*{j \succ_{\xx^n(t^-)} i}} \eqas \ind_{\curlbr*{x^n_j(t^-) > x^n_i(t^-)}},
    \end{align*}
    and therefore
    \begin{align*}
        \alpha^{\sigma_i=\sigma^n_i(t^-)}_n\bigparens{\xx^n(t^-), \pmb{\sigma}^n(t^-)} \eqas \alpha^{\sigma^n_i(t^-)}_n\bigparens{t^-, x^n_i(t^-)}.
    \end{align*}
\end{remark}
 \begin{proof}
    Let $t\geq 0$ and $f\in\HHH$ be chosen, and for shorthand, define $M_n(t) := A_{t,f}\big(\mu_n(\cdot)\big)$. By the preceding remark, the actual particle flip rates agree almost surely with the projected rates appearing in the generator. Hence, applying Dynkin's formula shows that $\curlbr*{M_n(t)\!: t\geq 0}$ is a martingale (in $t$). By Doob's $\LLL^2$ Inequality, then, for any $c > 0$,
    \begin{align*}
        \prob\parens*{\sup_{s\in [0, t]} \abs*{M_n(s)} \geq c} &\leq c^{-2} \expc\sqbr*{\sup_{s\in [0, t]} \big(M^2_n(s)\big)} \leq 4c^{-2} \expc\sqbr*{M^2_n(t)}.
    \end{align*}
    It suffices to show that $\lim_{n\to\infty} \expc\sqbr*{M\displaystyle^2_n(t)} = 0$. To that end, since $\curlbr*{M_n(t)\!: t\geq 0}$ is a martingale, by the Doob-Meyer decomposition, we have
    \begin{align*}
        \expc\sqbr*{M^2_n(t)} = \expc\sqbr*{\inner*{M_n}(t)},
    \end{align*}
    where $\curlbr*{\inner*{M_n}(t)\!: t\geq 0}$ denotes the (predictable) quadratic variation process of $\curlbr*{M_n(t)\!: t\geq 0}$. More specifically, for each $n\in\nat$ and $t\geq 0$,
      \begin{align*}
        \inner*{M_n}(t) := \int\limits_0^t \dfrac{1}{n\displaystyle^2} \dsum_{i=1}^n \curlbr*{v_{\sigma^n_i(s^-)} \expc_Z\sqbr*{\Big(\Delta^\text{jump}_i(s)\Big)^2} + \alpha^{\sigma^n_i(s^-)}_n \bigparens{s^-, x^n_i(s^-)} \parens*{\Delta^\text{flip}_i(s)}^2}  \di s.
     \end{align*}
    Here, for $i\in [n]$ and $s\in [0, t]$,
    \begin{align*}
        \Delta^\text{jump}_i(s) := f\bigparens{x^n_i(s^-) + Z, \sigma^n_i(s^-)} - f\bigparens{x^n_i(s^-), \sigma^n_i(s^-)},
    \end{align*}
    and
    \begin{align*}
        \Delta^\text{flip}_i(s) := f\bigparens{x^n_i(s^-), t\big(\sigma^n_i(s^-)\big)} - f\bigparens{x^n_i(s^-), \sigma^n_i(s^-).}
    \end{align*}
     Since $f$ is $1$-Lipschitz, we know $\abs*{\Delta^\text{flip}_i} \leq 1$. In addition, $\expc_Z\sqbr*{\Big(\Delta^\text{jump}_i(s)\Big)^2} \leq \expc\sqbr*{(Z+1)^2}$, and from Assumption \ref{assumption0}, we have $\expc\parens*{Z^2} < \infty$. Also, since $v = v_++v_-$ and $\alpha = \alpha^+ + \alpha^- + \lnorm*{\varphi}_\infty$ are the (uniform) upper bounds of $v_{\sigma^n_i(s)}$ and $\alpha^{\sigma^n_i(s)}_n$, respectively, we have
      \begin{align*}
        \inner*{M_n}(t) \leq \int\limits_0^t \dfrac{1}{n\displaystyle^2} \dsum_{i=1}^n \parens*{v \expc\sqbr*{(Z+1)^2} + \alpha}\di s = \dfrac{t}{n}\parens*{v \expc\sqbr*{(Z+1)^2} + \alpha},
     \end{align*}
    so for each (fixed) $t \geq 0$,
    \begin{align*}
        \expc\sqbr*{M^2_n(t)} = \expc\sqbr*{\inner*{M_n}(t)} \leq \dfrac{t}{n}\parens*{v \expc\sqbr*{(Z+1)^2} + \alpha} \toinf{n} 0,
    \end{align*}
    as desired.
\end{proof}

Now, we prove a result that provides an alternate characterization of the weak limit of the sequence of ``martingale'' terms. This, along with the previous result, will characterize any subsequential limit $\mu(\cdot)$ of $\curlbr*{\mu_n(\cdot)\!: n\in\nat}$ as a solution to the fluid limit equation $A_{t, f}\big(\mu(\cdot)\big) = 0$ for $t \geq 0$ and $f \in \HHH$.
\begin{theorem}
\label{martingale2}
    Suppose that Assumptions \ref{assumption0}, \ref{assumption2}, \ref{assumption3}, and \ref{assumption4} hold, which implies that the sequence $\{\mu_n(\cdot) : n \in \mathbb{N}\}$ is $\CCC$-tight on $\mathscr{D}\big([0, \infty), \mathscr{P}_1(\real\times \{+,-\})\big)$, and that there exists a subsequence $\curlbr*{\mu_{n_k}(\cdot)\!: k\in\nat}$ that converges weakly to a random measure-valued path $\mu(\cdot)$ in $\mathscr{C}\big([0, \infty), \mathscr{P}_1(\real\times \{+,-\})\big)$. Then for any $t\geq 0$ and $f\in \HHH$,
    \begin{align*}
        A_{t, f}\big(\mu_{n_k}(\cdot)\big) \cdinf{k} A_{t, f}\big(\mu(\cdot)\big)
    \end{align*}
    in $\real$.
\end{theorem}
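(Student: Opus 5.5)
The plan is to use the Skorokhod representation theorem. Since the limit lies in $\CCC\bigparens{[0,\infty),\PPP_1(\real\times\{+,-\})}$, we may realize $\mu_{n_k}(\cdot)$ and $\mu(\cdot)$ on a common probability space with $\mu_{n_k}(\cdot)\to\mu(\cdot)$ almost surely. Because the limit is continuous, convergence in the Skorokhod topology upgrades to uniform convergence on $[0,t]$ in $\WWW_1$. It then suffices to show $A_{t,f}(\mu_{n_k}(\cdot))\to A_{t,f}(\mu(\cdot))$ in probability on this space.

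Write
\[A_{t,f} = \inner{f,\mu(t)}-\inner{f,\mu(0)} - \int_0^t\inner{\LLL f(\cdot,\mu(s)),\mu(s)}\di s.\]
The first two terms converge because $f\in\HHH$ is $1$-Lipschitz, so $\abs{\inner{f,\mu_{n_k}(s)}-\inner{f,\mu(s)}}\le \WWW_1(\mu_{n_k}(s),\mu(s))$.

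The integrand splits into a jump part and a flip part.

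\emph{Jump part.} This is $v_\pm\expc_Z[f(x+Z,\pm)-f(x,\pm)]$. It does not depend on the measure and is $2$-Lipschitz in $x$ for each type. Its integral against $\mu_{n_k}(s)$ therefore converges uniformly in $s$, with error at most $2\WWW_1$.

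\emph{Flip part.} This is $\alpha^\pm(x,\nu)\bigparens{f(x,\mp)-f(x,\pm)}$, with $\alpha^\pm(x,\nu)=\alpha^\pm\mp\varphi(\nu^\mp((x,\infty)))$. Here $g(x,\pm):=f(x,\mp)-f(x,\pm)$ is bounded by $1$ and Lipschitz, so it suffices to control
\[\int \varphi\bigparens{\mu^\mp_{n_k}(s,(x,\infty))}\, g(x,\pm)\,\mu^\pm_{n_k}(s,\di x)\quad\text{versus the same with }\mu.\]
I would telescope this into two differences.

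The first difference replaces $\varphi(\mu^\mp_{n_k}(s,(x,\infty)))$ by $\varphi(\mu^\mp(s,(x,\infty)))$ while keeping the measure $\mu_{n_k}^\pm(s)$. By the Lipschitz property of $\varphi$, it is bounded by $\sup_x\abs{\mu^\mp_{n_k}(s,(x,\infty))-\mu^\mp(s,(x,\infty))}$, a Kolmogorov--Smirnov distance. The second difference integrates the fixed function $x\mapsto\varphi(\mu^\mp(s,(x,\infty)))g(x,\pm)$ against $\mu^\pm_{n_k}(s)-\mu^\pm(s)$. This converges provided the function is continuous, which holds if $\mu^\circ(s)$ has no atoms. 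Weak convergence of $\mu^\pm_{n_k}(s)$ then applies, by the portmanteau/continuous mapping theorem, since the function is bounded and continuous.

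Both steps need the limit to be atomless. For the first, weak convergence to a measure with continuous distribution function gives uniform convergence of distribution functions (P\'olya's theorem), and the compactness of $[0,t]$ together with continuity of $\mu$ in time makes this uniform in $s$. Dominated convergence over $s\in[0,t]$, with the integrand bounded by $\alpha+\lnorm{\varphi}_\infty$, then finishes the argument.

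\textbf{Main obstacle.} The key step is to show that the limit has no atoms uniformly in time, that is, $\mu\in\TTT$ almost surely. This requires an a priori continuity estimate for the prelimit, of the form
\[\lim_{\eta\to0}\limsup_{n}\prob\Bigparens{\sup_{s\le t}\sup_x\mu^\circ_n\bigparens{s,(x-\eta,x+\eta)}>\varepsilon}=0,\]
the estimate the paper calls Lemma \ref{uniformLemma}. I would prove it from Assumption \ref{assumption3}, which controls the initial concentration, combined with Assumption \ref{assumption4}. The idea is that over a short time, particles entering an interval of width $2\eta$ either were already nearby initially, or arrived by a jump whose landing probability is at most $2\eta\lnorm{\phi}_\infty$. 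Summing over jumps gives counts dominated by a Poisson-binomial number with mean $O(\eta)n$. A concentration inequality over a finite grid of times and of intervals then gives the bound uniformly over $x$ and $s$.

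Passing this estimate to the limit through open intervals via the portmanteau theorem yields $\mu\in\TTT$, and the convergence of the flip term follows.
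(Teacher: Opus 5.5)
Your argument is correct, but it splits the flip term the opposite way from the paper.

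\textbf{The paper's split.} The paper writes the difference as $\inner{\alpha^+_{n_k}(s,\cdot)h_f(\cdot,-),\mu^+_{n_k}(s)-\mu^+(s)}+\inner{(\alpha^+_{n_k}(s,\cdot)-\alpha^+(s,\cdot))h_f(\cdot,-),\mu^+(s)}$. In the first piece it keeps the discontinuous prelimit rate while changing the measure. This requires an optimal-coupling bound containing the term $C_2\,\mu_{n_k}(s,[x\wedge y,x\vee y]\times\{+,-\})$, and hence a direct use of the prelimit modulus from Lemma~\ref{uniformLemma}(a). Atomlessness of the limit is used only in the second piece, for pointwise convergence of $\alpha^+_{n_k}(s,x)$ followed by dominated convergence.

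\textbf{Your split.} You put the continuous limiting rate into the test function. You then bound the rate difference uniformly in $x$ by a Kolmogorov--Smirnov distance, which P\'olya's theorem sends to zero. The type masses converge because $\ind_{\{\sigma=\pm\}}$ is $1$-Lipschitz for $d_1$.

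\textbf{What each buys.} Your convergence step needs only that each $\mu(s)$ is atomless, and it uses no coupling. The prelimit estimate is used just once, to transfer that property to the limit. You also get almost sure rather than in-probability convergence on the Skorokhod space. The paper's split, in exchange, gives an explicit bound on the measure-change piece in terms of $\int_0^t\WWW_1(\mu_{n_k}(s),\mu(s))\di s$ and the prelimit modulus.

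\textbf{Your sketch of Lemma~\ref{uniformLemma}.} This also differs from the paper. You bound the number of jumps in $[0,T]$ by independent rate-$(v_++v_-)$ clocks and use that every landing has density at most $\lnorm{\phi}_\infty$. This gives a hitting probability $O(\eta T)$ per particle without the renewal theorem. The paper's renewal-measure bound $\omega_J(2\eta)$, by contrast, does not grow with time.

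Two points need filling in:
\begin{itemize}
\item Independence across particles requires coupling the jump counts to type-independent clocks, since the true jump counts depend on the types.
\item A finite grid of intervals cannot cover $\real$. You need tail control from Assumption~\ref{assumption2}, Theorem~\ref{tight2} and monotonicity of the trajectories, or the paper's $n$-dependent grid on $[-n,n]$ with Hoeffding's inequality.
\end{itemize}
A grid in time is unnecessary: a particle lying in an interval at some $s\le T$ either started there or landed there by a jump in $(0,T]$.
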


Before proving this result, we give the following ``uniform continuity'' result that will be crucial to the proof:
 \begin{lemma}
    \label{uniformLemma}
    Suppose that Assumptions \ref{assumption0}, \ref{assumption2}, \ref{assumption3}, and \ref{assumption4} hold.
    \begin{enumerate}[label = (\alph*)]
        \item \label{preLimit} For each $T\geq 0$ and $\varepsilon > 0$,
        \begin{align*}
            \lim_{\eta\to 0}\limsup_{n\to\infty} \prob\parens*{\sup_{s\in[0, T]} \sup_{x\in\real} \mu^\circ_n\bigparens{s, (x-\eta, x+\eta)} > \varepsilon} = 0.
        \end{align*}

        \item \label{limiting} If $\mu(\cdot)$ is a subsequential weak limit  of $\curlbr*{\mu_n(\cdot)\!: n\in\nat}$, then $\mu\in\TTT$ almost surely.
    \end{enumerate}
\end{lemma}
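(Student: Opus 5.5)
Part \ref{preLimit} comes from an upper bound on how much mass can sit in a small window of space at any time, and part \ref{limiting} follows from \ref{preLimit} by a portmanteau argument.

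\textbf{Decomposition at a fixed time.} For a fixed $s\in[0,T]$, each particle's location at time $s$ is either its initial location (it has not jumped on $[0,s]$) or its position at its last jump time. I will write $\mu^\circ_n\bigparens{s,(x-\eta,x+\eta)}\le \mu^\circ_n\bigparens{0,(x-\eta,x+\eta)}+R_n(s,x,\eta)$, where $R_n(s,x,\eta)$ counts (with weight $1/n$) the particles that have jumped at least once by time $s$ and lie in the window. The first term is controlled uniformly in $s$ and $x$ by Assumption \ref{assumption3}. For the second term I will use the auxiliary bound that the total jump rate is at most $v=v_++v_-$.

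\textbf{Bounding $R_n$.} Conditional on the pre-jump configuration, a particle jumping from $y$ lands in $(x-\eta,x+\eta)$ with probability at most $2\eta\lnorm{\phi}_\infty$, by Assumption \ref{assumption4}. Discretize $x$ over a grid of mesh $\eta$; each window of width $2\eta$ is covered by at most three grid windows. Over $[0,T]$, the number of (particle, jump) pairs that land in a fixed grid cell is then stochastically dominated by a thinned Poisson count with mean at most $n\,vT\cdot 6\eta\lnorm{\phi}_\infty$. The quantity $R_n$ at time $s$ is bounded by $(1/n)$ times the number of jumps landing in the cell during $[0,s]$. Taking the supremum over $s$, we get $\sup_s R_n\le (1/n)N_{\text{cell}}(T)$.

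The remaining difficulty is the supremum over infinitely many cells. I would restrict to cells within $[-L,L+C]$, which is enough since with high probability all mass is confined except for a small amount controlled by Theorem \ref{tight2} and monotonicity of positions. Within that range there are $O(L/\eta)$ cells. Each $N_{\text{cell}}(T)/n$ has mean $O(\eta)$ and variance $O(\eta/n)$, so a union bound with Chebyshev (or Chernoff) bounds gives
\[
\prob\Bigl(\max_{\text{cells}} N_{\text{cell}}(T)/n > \varepsilon/3\Bigr)\to 0 \quad\text{as } n\to\infty,
\]
once $\eta$ is small enough that $6vT\eta\lnorm{\phi}_\infty<\varepsilon/6$. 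Making the dominating counts rigorous, via a Poisson random measure construction of jumps with marks, is the main technical obstacle. A cleaner alternative is a compensator argument: $N_{\text{cell}}(t)$ minus its compensator is a martingale, the compensator is at most $nvT\cdot 6\eta\lnorm\phi_\infty$, and Doob's inequality applies per cell.

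\textbf{Part \ref{limiting}.} Along the subsequence, apply Skorokhod representation. For an open interval $I$, the portmanteau theorem gives $\mu^\circ(s,I)\le\liminf_k\mu^\circ_{n_k}(s,I)$ simultaneously for all $s$, since convergence in $\mathscr{C}$ is locally uniform and weak convergence holds at each $s$. Hence
\[
\prob\Bigl(\sup_{s\le T}\sup_x\mu^\circ\bigparens{s,(x-\eta,x+\eta)}>\varepsilon\Bigr)\le\limsup_k\prob\Bigl(\sup_{s\le T}\sup_x\mu^\circ_{n_k}\bigparens{s,(x-\eta,x+\eta)}>\varepsilon\Bigr),
\]
possibly after replacing the event by a closed version, for instance using $\eta/2$ on the limit side. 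By \ref{preLimit} the right-hand side tends to zero as $\eta\to0$. Monotonicity in $\eta$ then gives the almost sure limit for each $T$. Taking $T\in\nat$, and using that $\mu\in\CCC$ almost surely by $\CCC$-tightness (Corollary \ref{tightD}), we conclude $\mu\in\TTT$ almost surely.
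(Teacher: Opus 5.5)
Your proposal is correct. Part (b) is essentially the paper's argument: Skorokhod representation, portmanteau on open windows, Fatou, monotonicity in $\eta$, and intersection over $T\in\mathbb{N}$. The hedge about passing to a closed event is unnecessary, because $\{\liminf_k X_k>\varepsilon\}\subseteq\liminf_k\{X_k>\varepsilon\}$ already lets Fatou handle the open event directly.

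For part (a) you take a genuinely different route. The paper dominates $\sup_{s\ge 0}\mathbf 1\{x^n_i(s)\in W\}$ pathwise by $\mathbf 1\{x^n_i(0)\in W\}+\mathbf 1\{\exists k\ge 1: S_i(k)\in W\}$. Here $S_i$ is particle $i$'s embedded random walk, built only from its own i.i.d.\ jump sizes, so the interaction-dependent jump times drop out. This makes the summands conditionally independent given $\mathscr F(0)$. Their means are bounded by the renewal measure of a short interval, which the paper makes uniformly $O(\eta)$ using the renewal theorem and Stone's decomposition (with $\mathbb E Z=1$ and spread-outness). Hoeffding's exponential bound then survives a union over $O(n/\eta)$ grid points in $[-n,n]$.

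You instead bound the mass of already-jumped particles in a cell by the number of jumps landing there. You control that count through its compensator, which is at most $nvT|D|\,\|\phi\|_\infty$ whatever the interaction does. Since the count is non-decreasing, the supremum over $s\le T$ is just the value at $T$, so Doob is not needed. A small correction: it is the martingale part, with second moment $O(n\eta)$, that gives the $O(\eta/n)$ fluctuation after dividing by $n$; the compensator itself is bounded deterministically, and $N_{\rm cell}/n$ as a whole need not have variance $O(\eta/n)$. With that, Chebyshev plus a union over the $O(L/\eta)$ cells in a fixed range suffices. You then send $L\to\infty$, using monotone paths together with tightness at times $0$ and $T$.

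The trade-off is this. Your route is more elementary: it needs only the bounded density of $J$ and the rate bound $v$, with no renewal theory and no independence. Its main term $O(vT\eta\,\|\phi\|_\infty)$ grows with $T$, though, and it needs the extra truncation parameter $L$. The paper's route gives a bound on the moving mass that is uniform over all $s\ge 0$, with exponential concentration, at the cost of the renewal-theoretic input.
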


\begin{proof}
    In what follows, we present the proof of part \ref{preLimit}, and the proof of part \ref{limiting} will follow from that of part \ref{preLimit}.
    \begin{enumerate}[label = (\alph*)]
        \item Let $\curlbr*{Z_i(\ell)\!: i\in[n], \sspa \ell\in\nat}$ be an array of independent copies of $Z\sim J$, and suppose that the $Z_i(\ell)$'s are independent of $\FFF(0)$ as well. Define the embedded walk $\curlbr*{S_i(k)\!: k\in\nat_0}$, where $S_i(0) = x^n_i(0)$, and for each $k\in\nat$,
        \begin{align*}
            S_i(k) := x^n_i(0) + \dsum_{\ell=1}^k Z_i(\ell).
        \end{align*}
        Also, define $\curlbr*{S'_i(k)\!: k\in\nat_0}$, where $S'_i(0) = 0$, and for $k\in\nat$,
        \begin{align*}
            S'_i(k):= S_i(k) - x^n_i(0) = \dsum_{\ell=1}^k Z_i(\ell).
        \end{align*}
        Observe that $\curlbr*{S'_i(k)\!: k\in\nat_0}$ defines a sequence of renewal epochs, with the corresponding renewal measure
        \begin{align*}
            \mu_J(A) := \dsum_{k=1}^\infty \prob\big(S'_1(k) \in A\big)
        \end{align*}
        for $A\in \borel\bigparens{[0, \infty)}$. By Assumption~\ref{assumption4}, $J$ has a bounded density and hence is spread out. Moreover, by Assumption \ref{assumption0}, $\expc(Z)=1$. Thus, by the Renewal Theorem and Stone's decomposition, $\mu_J$ admits a bounded density $u$ on $[0, \infty)$ \big(see \cite[366--368]{F91} and \cite{Sto66}\big). Then with $\omega_J(\eta) := \sup_{b\in\real} \mu_J\bigparens{(b-\eta, b+\eta)}$, since $0\leq \omega_J(\eta) \leq 2\lnorm*{u}_\infty \eta$, we have
        \begin{align}
        \label{bddDensity}
            \omega_J(\eta) \xrightarrow{\sspa \eta\downarrow 0 \sspa} 0.
        \end{align}
         Now, observe that for any $z \in \real$,
        \begin{align}
        \label{trace1}
            \sup_{s\geq 0} \mu^\circ_n\bigparens{s, (z-2\eta, z+2\eta)} \leq \dfrac{1}{n} \dsum_{i=1}^n \ind_{\curlbr*{x^n_i(0) \in (z-2\eta, z+2\eta)}} + \dfrac{1}{n} \dsum_{i=1}^n \ind_{\curlbr*{\exists k\in\nat: S_i(k) \in (z-2\eta, z+2\eta)}}.
        \end{align}
          We treat each term on the right-hand side of \eqref{trace1} separately. First, consider the second term. Conditional on the initial profile, the summands of the second term of the right-hand side of \eqref{trace1} are independent across the $i$'s, with expectation given by
        \begin{align*}
            p_{i,z} &:= \prob\parens*{\exists k\in\nat\!: S_i(k) \in (z-2\eta, z+2\eta) \big|\FFF(0)} \\
            &\leq \dsum_{k=1}^\infty \prob\bigparens{S'_i(k)\in \parens*{z-x^n_i(0)-2\eta, z-x^n_i(0)+2\eta} \big|\FFF(0)} \\
            &= \mu_J\bigparens{\parens*{z-x^n_i(0)-2\eta, z-x^n_i(0)+2\eta}} \leq \omega_J(2\eta).
       \end{align*}
          Now, let $\varepsilon > 0$ be chosen, and   let $\eta \in \parens*{0, \dfrac{1}{2}}$ be chosen so that $\omega_J(2\eta) \leq \dfrac{\varepsilon}{4}$. Then by the (conditional) Hoeffding's Inequality,
        \begin{align}
        \label{condHoeffding}
            &\prob\parens*{\dfrac{1}{n} \dsum_{i=1}^n \ind_{\curlbr*{\exists k\in\nat: S_i(k) \in (z-2\eta, z+2\eta)}} > \dfrac{\varepsilon}{2} \Bigg|\FFF(0)} \nonumber \\
            &\qquad \leq \prob\parens*{\dfrac{1}{n} \dsum_{i=1}^n \ind_{\curlbr*{\exists k\in\nat: S_i(k) \in (z-2\eta, z+2\eta)}} - \dfrac{1}{n} \dsum_{i=1}^n p_{i,z} > \dfrac{\varepsilon}{4} \Bigg|\FFF(0)} \leq \exp\parens*{-\dfrac{n\varepsilon^2}{8}},
       \end{align}
        and after taking expectations, the same unconditional bound holds.

        Next, for each $n\in\nat$   and each $\eta > 0$, let $\ell(n)\in\nat$, with $\ell(n) \leq \dfrac{2n}{\eta} + 1$, be chosen so that there exist   $-n = z_0 < z_1 < \dots < z_{\ell(n)} = n$ that satisfy $z_j - z_{j-1} \leq \eta$ for each $j\in [\ell(n)]$. First, suppose that $x \in [-n,n]$. Then there exists some $j \in \{0,1,\ldots,\ell(n)\}$ such that
        \begin{align*}
            (x-\eta, x+\eta) \subseteq (z_j-2\eta, z_j+2\eta).
       \end{align*}
          Thus, by \eqref{trace1}, \eqref{condHoeffding}, and the union bound, with $\eta \in \parens*{0, \dfrac{1}{2}}$ being chosen so that $\omega_J(2\eta) \leq \dfrac{\varepsilon}{4}$, we have
       \begin{align}
        \label{trace2}
            \prob\parens*{\sup_{s\geq 0}\sup_{x\in[-n,n]} \mu_n^\circ\bigparens{s,(x-\eta,x+\eta)} > \varepsilon} &\leq \prob\parens*{\sup_{z\in\real} \mu_n^\circ\bigparens{0,(z-2\eta,z+2\eta)} >\frac{\varepsilon}{2}} \nonumber \\
            &\qquad + \dsum_{j=0}^{\ell(n)} \prob\parens*{\dfrac{1}{n}\sum_{i=1}^n \ind_{\curlbr*{\exists k\in\nat: S_i(k)\in(z_j-2\eta,z_j+2\eta)}} > \dfrac{\varepsilon}{2}} \nonumber \\
            &\leq \prob\parens*{\sup_{z\in\real} \mu_n^\circ\bigparens{0,(z - 2\eta,z + 2\eta)} > \dfrac{\varepsilon}{2}} \nonumber \\
            &\qquad + \parens*{\dfrac{2n}{\eta} + 2} \exp\parens*{-\dfrac{n\varepsilon^2}{8}}.
       \end{align}
          Now, suppose that $x\notin [-n,n]$, and observe that $n \geq 1 > 2\eta$. If $x > n$, then $(x - \eta, x + \eta) \subseteq \parens*{\dfrac{n}{2}, \infty}$, while if $x < -n$, then $(x - \eta, x + \eta) \subseteq \parens*{-\infty, -\dfrac{n}{2}}$. Furthermore, since every spatial trajectory is non-decreasing,
        \begin{align}
        \label{trace3}
           &\prob\parens*{\sup_{s \in [0,T]} \sup_{x \notin [-n,n]} \mu_n^\circ\bigparens{s,(x-\eta, x+\eta)} > \varepsilon} \nonumber\\
            &\qquad \leq \prob\parens*{\mu_n^\circ\parens*{T,\parens*{\dfrac{n}{2}, \infty}} > \varepsilon} + \prob\parens*{\mu_n^\circ\parens*{0,\parens*{-\infty, -\dfrac{n}{2}}} > \varepsilon}.
        \end{align}
        From Assumption \ref{assumption2}, we know that the second term on the right-hand side of \eqref{trace3} vanishes as $n$ grows, so it remains to bound the first term. To that end, let $L>0$ be chosen, and observe that for sufficiently large $n$,
       \begin{align*}
            \curlbr*{\mu_n^\circ\parens*{T, \parens*{\dfrac{n}{2},\infty}} > \varepsilon} \subseteq \curlbr*{\int\limits |y| \ind_{\curlbr*{|y|\geq L}} \mu_n^\circ(T, \di y) > 1};
       \end{align*}
        hence,
       \begin{align}
        \label{newUI}
            \limsup_{n\to\infty} \prob\parens*{\mu_n^\circ\parens*{T, \parens*{\dfrac{n}{2},\infty}} > \varepsilon} \leq \limsup_{n\to\infty} \prob\parens*{\int\limits |y| \ind_{\curlbr*{|y|\geq L}} \mu_n^\circ(T, \di y) > 1}.
       \end{align}
        Since the left-hand side of \eqref{newUI} does not depend on $L$, from Theorem \ref{tight2}, taking the limit of both sides of \eqref{newUI} as $L$ tends to infinity gives
       \begin{align*}
            \limsup_{n\to\infty} \prob\parens*{\mu_n^\circ\parens*{T, \parens*{\dfrac{n}{2},\infty}} > \varepsilon} = 0,
        \end{align*}
          which, together with \eqref{trace3}, implies that
        \begin{align}
        \label{trace4}
            \limsup_{n\to\infty} \prob\parens*{\sup_{s\in[0,T]} \sup_{x \notin [-n,n]} \mu_n^\circ\bigparens{s,(x-\eta,x+\eta)} > \varepsilon} = 0.
        \end{align}
        Lastly, for each sufficiently small (fixed) $\eta \in \parens*{0, \dfrac{1}{2}}$, chosen so that $\omega_J(2\eta) \leq \dfrac{\varepsilon}{4}$, from \eqref{trace2} and \eqref{trace4},
        \begin{align*}
            &\limsup_{n\to\infty} \prob\parens*{\sup_{s\in[0,T]}\sup_{x\in\real} \mu_n^\circ\bigparens{s,(x-\eta, x + \eta)} > \varepsilon} \\
            &\qquad \leq \limsup_{n\to\infty} \prob\parens*{\sup_{z\in\real} \mu_n^\circ\bigparens{0,(z-2\eta, z+2\eta)} >\frac{\varepsilon}{2}},
        \end{align*}
        and letting $\eta$ tend to zero and invoking Assumption \ref{assumption3} give us part \ref{preLimit}.

        \item By appealing to the Skorohod Representation Theorem, without loss of generality, we may assume that $\mu_{n_k}(\cdot) \casinf{k} \mu(\cdot)$ on $\mathscr{D}\big([0, \infty), \mathscr{P}_1(\real\times \{+,-\})\big)$.   Since the limiting path is continuous, this convergence is uniform
        on every compact time interval. In particular, for every $T > 0$,
        \begin{align*}
            \sup_{s\in[0,T]} \WWW_1\bigparens{\mu_{n_k}(s),\mu(s)} \casinf{k} 0.
        \end{align*}

        Next, let $T>0$, $\eta>0$, $s\in[0,T]$, and $x\in\real$ be chosen. Since the set $(x-\eta,x+\eta)\times\{+,-\}$ is open in $\real \times \{+,-\}$, from the Portmanteau Theorem, almost surely,
        \begin{align*}
            \mu^\circ\bigparens{s,(x-\eta,x+\eta)} \leq \liminf_{k\to\infty} \mu_{n_k}^\circ\bigparens{s,(x-\eta,x+\eta)},
        \end{align*}
        which implies that, almost surely,
        \begin{align}
            \sup_{s\in[0,T]} \sup_{x\in\real} \mu^\circ\bigparens{s,(x-\eta, x+\eta)} \leq \liminf_{k\to\infty} \sup_{s\in[0,T]} \sup_{x\in\real} \mu_{n_k}^\circ\bigparens{s,(x-\eta, x+\eta)}.
        \end{align}
        Therefore, for each $\varepsilon>0$, from Fatou's Lemma and our result in part \ref{preLimit},
        \begin{align*}
            &\prob\parens*{\sup_{s\in[0,T]} \sup_{x\in\real} \mu^\circ\bigparens{s,(x-\eta,x+\eta)} > \varepsilon} \\
            &\qquad \leq \liminf_{k\to\infty} \prob\parens*{\sup_{s\in[0,T]} \sup_{x\in\real} \mu_{n_k}^\circ\bigparens{s,(x-\eta,x+\eta)}
            >\varepsilon} \\
            &\qquad\leq \limsup_{n\to\infty} \prob\parens*{\sup_{s\in[0,T]} \sup_{x\in\real} \mu_n^\circ\bigparens{s,(x-\eta,x+\eta)} >\varepsilon} \xrightarrow{\sspa \eta\downarrow 0 \sspa} 0.
        \end{align*}

        For every fixed $T>0$, $\sup_{s\in[0,T]}\sup_{x\in\real} \mu^\circ\parens{s,(x-\eta, x+\eta)}$ is non-increasing as $\eta$ shrinks to $0$; thus, almost surely,
        \begin{align*}
            \lim_{\eta\downarrow 0} \sup_{s\in[0,T]} \sup_{x\in\real} \mu^\circ\bigparens{s,(x-\eta, x+\eta)}
        \end{align*}
        exists, and the preceding convergence in probability forces that limit to be zero almost surely. Intersecting the resulting probability-one events over $T \in\nat$, and then using monotonicity in $T$, we conclude that almost surely, for every $T > 0$,
        \begin{align*}
            \lim_{\eta\downarrow 0} \sup_{s\in[0,T]}\sup_{x\in\real} \mu^\circ\bigparens{s,(x-\eta, x+\eta)} = 0;
        \end{align*}
          that is, $\mu\in\TTT$ almost surely, as desired.
    \end{enumerate}
    That completes our proof of Lemma \ref{uniformLemma}.
\end{proof}
With that, we are ready to prove the relevant result, Theorem \ref{martingale2}:

\begin{proof}[Proof of Theorem \ref{martingale2}]
    Since $\curlbr*{\mu_n(\cdot)\!: n\in\nat}$ is $\mathscr{C}$-tight on $\mathscr{D}\big([0, \infty), \mathscr{P}_1(\real\times \{+,-\})\big)$, almost surely, any subsequential weak limit $\mu(\cdot)$ belongs to the class $\mathscr{C}\big([0, \infty), \mathscr{P}_1(\real\times \{+,-\})\big)$. Then by appealing to the Skorohod Representation Theorem (again), without loss of generality, we may assume that
    \begin{align*}
        \mu_{n_k}(\cdot) \casinf{k} \mu(\cdot)
    \end{align*}
     on $\mathscr{D}\big([0, \infty), \mathscr{P}_1(\real\times \{+,-\})\big)$, and that $\mu(\cdot) \in \mathscr{C}\big([0, \infty), \mathscr{P}_1(\real\times \{+,-\})\big)$.

    Let $t\geq 0$ and $f\in \HHH$ be chosen. First, observe that for each $t\geq 0$, the canonical projection
    \begin{align*}
        \pi_t\!: \mathscr{D}\big([0, \infty), \mathscr{P}_1(\real\times \{+,-\})\big) \to \mathscr{P}_1(\real\times \{+,-\}),
    \end{align*}
    given by $\pi_t\big(\nu(\cdot)\big) = \nu(t)$, is continuous at $\nu(\cdot)$ if and only if $\nu(\cdot)$ is continuous at $t$. In addition, by the Kantorovich-Rubinstein duality for the $1$-Wasserstein metric, for each $f\in \HHH$, the mapping
    \begin{align*}
     k_f\!: \mathscr{P}_1(\real\times \{+,-\}) \to\real,
    \end{align*}
    given by $k_f(\nu) = \inner*{f, \nu} = \int f\di\nu$, is continuous as well. Thus, for each $t\geq 0$,
      \begin{align}
        \label{weakLimit1}
        \biginner{f, \mu_{n_k}(t)} \casinf{k} \biginner{f, \mu(t)}.
     \end{align}
    Next, for convenience, for $f\in\HHH$, define
    \begin{align*}
        g_f(x, \pm) := \expc_Z\bigsqbr{f(x+Z, \pm) - f(x, \pm)} & & \text{and} & & h_f(x,\pm) := f(x, \pm) - f(x, \mp).
    \end{align*}
    Then it remains to show that for each $t\geq 0$ and for each $f\in\HHH$,
      \begin{align*}
        &\int\limits_0^t \Bigsqbr{\biginner{v_+ g_f(\cdot,+), \mu^+_{n_k}(s)}+\biginner{v_- g_f(\cdot,-), \mu^-_{n_k}(s)}} \di s \\
        &\qquad + \int\limits_0^t\Bigsqbr{\biginner{\alpha^+_{n_k}(s, \cdot) h_f(\cdot,-), \mu^+_{n_k}(s)}+\biginner{\alpha^-_{n_k}(s, \cdot) h_f(\cdot,+), \mu^-_{n_k}(s)}} \di s \cpinf{k} \\
        &\int\limits_0^t \Bigsqbr{\biginner{v_+ g_f(\cdot,+), \mu^+(s)}+\biginner{v_- g_f(\cdot,-), \mu^-(s)}}\di s \\
        &\qquad +\int\limits_0^t\Bigsqbr{\inner*{\alpha^+(s, \cdot) h_f(\cdot,-), \mu^+(s)}+\inner*{\alpha^-(s, \cdot) h_f(\cdot,+), \mu^-(s)}} \di s,
     \end{align*}
      where, along the path $\mu(\cdot)$, for each $t \geq 0$,
     \begin{align*}
        \alpha^\pm(t, x) := \alpha^\pm\bigparens{x, \mu(t)} = \alpha^\pm \mp \varphi\parens*{\int \ind_{\{z> x\}} \mu^\mp(t, \di z)}.
     \end{align*}
     To this end, first, observe that $f^\pm\!: \real\times \{+,-\} \to\real$, given by $f^\pm(x, \sigma) := \ind_{\{\sigma = \pm\}}$, are bounded and Lipschitz. Moreover, for each $f\in \HHH$, $g_f$ is bounded and Lipschitz as well. Thus, for each $f\in \HHH$, the functions $f^\pm g_f$ are bounded and Lipschitz.
    Hence, for each $f\in \HHH$ and $t\geq 0$,
      \begin{align*}
        \int\limits_0^t\inner*{v_\pm g_f, \mu^\pm_{n_k}(s)} \di s= \int\limits_0^t\inner*{v_\pm f^\pm g_f, \mu_{n_k}(s)} \di s\casinf{k} \int\limits_0^t\inner*{v_\pm f^\pm g_f, \mu(s)} \di s= \int\limits_0^t\inner*{v_\pm g_f, \mu^\pm(s)} \di s.
     \end{align*}
    Thus, if we can show that
      \begin{align}
      \label{convprob}
        &\int\limits_0^t\Bigsqbr{\biginner{\alpha^+_{n_k}(s, \cdot) h_f(\cdot,-), \mu^+_{n_k}(s)}+\biginner{\alpha^-_{n_k}(s, \cdot) h_f(\cdot,+), \mu^-_{n_k}(s)}} \di s \nonumber \\
        &\qquad\cpinf{k} \int\limits_0^t\Bigsqbr{\inner*{\alpha^+(s, \cdot) h_f(\cdot,-), \mu^+(s)}+\inner*{\alpha^-(s, \cdot) h_f(\cdot,+), \mu^-(s)}} \di s,
     \end{align}
    then our proof will be complete. This needs a more careful treatment, as $\alpha^\pm(s, \cdot)$ involves an indicator, and it brings a non-linear dependence of the dynamics on the driving measure-valued process.

    To show this, for $s \in [0,t]$, consider the $\WWW_1$-optimal coupling $\Pi_{n_k}(s,\di \xx, \di \yy)$ between the measures $\mu_{n_k}(s,\di \xx)$  and $\mu(s,\di \yy)$ on $\real \times \{+,-\}$. Note that
      \begin{align*}
        \Big|\inner*{\alpha^+_{n_k}(s, \cdot) h_f(\cdot,-), \mu^+_{n_k}(s)}-\inner*{\alpha^+_{n_k}(s, \cdot) h_f(\cdot,-), \mu^+(s)}\Big| \leq \int\Big|G_{n_k}(s,x) - G_{n_k}(s,y)\Big|\Pi_{n_k}(s, \di \xx, \di \yy),
     \end{align*}
    where $G_{n_k}(s,\cdot) := \alpha^+_{n_k}(s, \cdot) h_f(\cdot, -)f^+(\cdot)$. From the explicit forms of these functions, we can find finite positive constants $C_1$ and $C_2$, independent of $s$, such that for any $n_k\in\nat$ and for $\xx = (x, \sigma_1)$ and $\yy = (y, \sigma_2)$ in $\real \times \{+,-\}$ with $x\leq y$,
      \begin{align*}
        \abs*{G_{n_k}(s,\xx) - G_{n_k}(s,\yy)} \leq C_1 d_1(\xx, \yy) + C_2\mu_{n_k}\bigparens{s, [x,y] \times \{+,-\}},
     \end{align*}
    where, for $(z, \sigma)$ and $(z', \sigma')$ in $\real \times \{+,-\}$, $d_1\bigparens{(z,\sigma), (z',\sigma')} := |z - z'| + \ind_{\{\sigma \neq \sigma'\}}$. Then
      \begin{align}
      \label{coupling1}
        &\Big|\inner*{\alpha^+_{n_k}(s, \cdot) h_f(\cdot,-), \mu^+_{n_k}(s)} - \inner*{\alpha^+_{n_k}(s, \cdot) h_f(\cdot,-), \mu^+(s)}\Big| \nonumber \\
        &\qquad \leq C_1\int d_1(\xx, \yy) \Pi_{n_k}(s, \di \xx, \di \yy) + C_2\int \mu_{n_k}\bigparens{s, [x \wedge y, x \vee y] \times \{+,-\}} \Pi_{n_k}(s, \di \xx, \di \yy) \nonumber \\
        &\qquad = C_1\WWW_1\bigparens{\mu_{n_k}(s),\mu(s)} + C_2\int \mu_{n_k}\bigparens{s, [x \wedge y, x \vee y] \times \{+,-\}} \Pi_{n_k}(s, \di \xx, \di \yy).
     \end{align}
    To control the second term on the right-hand side of \eqref{coupling1}, observe that for any $\eta>0$,
      \begin{align*}
        &\int \mu_{n_k}(s, [x \wedge y, x \vee y]\times \{+,-\}) \Pi_{n_k}(s, \di \xx, \di \yy) \\
        &\qquad \leq \Pi_{n_k}\bigparens{s, \{(\xx,\yy): d_1(\xx, \yy) > \eta\}} +\! \int\limits_{\curlbr*{(\xx,\yy): d_1(\xx, \yy) \leq \eta}}\! \mu_{n_k}\bigparens{s, [x \wedge y, x \vee y]\!\times\! \{+,-\}} \Pi_{n_k}(s, \di \xx, \di \yy) \\
        &\qquad \leq \Pi_{n_k}\bigparens{s, \{(\xx,\yy)\!: d_1(\xx, \yy) > \eta\}} + \sup_{z\in\real} \mu_{n_k}\bigparens{s, (z-\eta, z+\eta) \times \{+,-\}} \\
        &\qquad \leq \dfrac{1}{\eta} \WWW_1(\mu_{n_k}(s),\mu(s)) +  \sup_{z\in\real} \mu_{n_k}\big(s, (z-\eta, z+\eta) \times \{+,-\}\big),
     \end{align*}
    where the inequality on the last line follows from Markov's Inequality. Thus, by integrating over $s \in [0,t]$,
      \begin{align*}
        &\int\limits_0^t \Big|\inner*{\alpha^+_{n_k}(s, \cdot) h_f(\cdot,-), \mu^+_{n_k}(s)}-\inner*{\alpha^+_{n_k}(s, \cdot) h_f(\cdot,-), \mu^+(s)}\Big| \di s \\
        &\qquad \leq \parens*{C_1 + \dfrac{C_2}{\eta}}\int\limits_0^t \WWW_1(\mu_{n_k}(s),\mu(s)) \di s + C_2 t \sup_{s\in[0, t]}\sup_{z\in\real} \mu_{n_k}\bigparens{s, (z-\eta, z+\eta) \times \{+,-\}}.
     \end{align*}
    Hence, for any $\varepsilon>0$,
      \begin{align}
      \label{coupling2}
        &\prob\parens*{\int\limits_0^t\Big|\inner*{\alpha^+_{n_k}(s, \cdot) h_f(\cdot,-), \mu^+_{n_k}(s)}-\inner*{\alpha^+_{n_k}(s, \cdot) h_f(\cdot,-), \mu^+(s)}\Big|\di s > \varepsilon} \nonumber \\
        &\qquad \leq \prob\parens*{\parens*{C_1 + \dfrac{C_2}{\eta}} \int\limits_0^t \WWW_1\bigparens{\mu_{n_k}(s),\mu(s)} \di s > \dfrac{\varepsilon}{2}} \nonumber \\
        &\qquad \qquad + \prob\parens*{\sup_{s\in[0, t]} \sup_{x\in\real} \mu_{n_k}\big(s, (x-\eta, x+\eta) \times \{+,-\}\big) > \dfrac{\varepsilon}{2C_2t}}.
     \end{align}
    Since $\mu_{n_k} \casinf{k} \mu$ on $\mathscr{D}\big([0, \infty), \PPP_1(\real\times \{+,-\})\big)$, that implies $\int\limits_0^t \WWW_1\bigparens{\mu_{n_k}(s),\mu(s)}\di s \casinf{k} 0$, and so the first term on the right-hand side of \eqref{coupling2} vanishes as $k$ grows. Moreover, from Lemma \ref{uniformLemma}\ref{preLimit}, the second term on the right-hand side of \eqref{coupling2} also vanishes upon first letting $k$ grow and then letting $\eta$ shrink to zero. As $\varepsilon>0$ is arbitrary, we conclude that
      \begin{align}
    \label{sameAlpha}
        \int\limits_0^t\Big|\inner*{\alpha^+_{n_k}(s, \cdot) h_f(\cdot,-), \mu^+_{n_k}(s)}-\inner*{\alpha^+_{n_k}(s, \cdot) h_f(\cdot,-), \mu^+(s)}\Big|\di s \cpinf{k} 0.
    \end{align}
    Next, by Lemma \ref{uniformLemma}\ref{limiting}, $\mu\in\TTT$ almost surely, and hence $\mu^\circ\bigparens{s,\{x\}} = 0$ for every $s\geq 0$ and $x\in\real$. Thus, $(x,\infty) \times \{-\}$ is a $\mu(s)$-continuity set, so the Portmanteau Theorem and the continuity of $\varphi$ imply that
    \begin{align*}
        \alpha_{n_k}^+(s,x) \casinf{k} \alpha^+(s,x).
    \end{align*}
    Hence, by the Dominated Convergence Theorem,
      \begin{align}
    \label{sameMeas}
        \int\limits_0^t\Bigabs{\inner*{\alpha^+_{n_k}(s, \cdot) h_f(\cdot,-), \mu^+(s)} -\inner*{\alpha^+(s, \cdot) h_f(\cdot,-), \mu^+(s)}}\di s \casinf{k} 0.
    \end{align}
    The Triangle Inequality, in conjunction with \eqref{sameAlpha} and \eqref{sameMeas}, gives
      \begin{align*}
       \int\limits_0^t\Bigabs{\biginner{\alpha^+_{n_k}(s, \cdot) h_f(\cdot,-), \mu^+_{n_k}(s)}-\biginner{\alpha^+(s, \cdot) h_f(\cdot,-), \mu^+(s)}} \di s \cpinf{k} 0.
    \end{align*}
    By a parallel argument,
      \begin{align*}
       \int\limits_0^t\Bigabs{\biginner{\alpha^-_{n_k}(s, \cdot) h_f(\cdot,+), \mu^-_{n_k}(s)}-\biginner{\alpha^-(s, \cdot) h_f(\cdot,+), \mu^-(s)}}\di s \cpinf{k} 0
    \end{align*}
    as well, which yields \eqref{convprob} and completes our proof.
\end{proof}
To tie the previous two results together, consider the following:
\begin{corollary}
\label{initialization}
    Suppose that Assumptions \ref{assumption0}, \ref{assumption2}, \ref{assumption3}, and \ref{assumption4} hold. Then any subsequential weak limit point $\mu(\cdot)$ of the sequence of empirical measures $\curlbr*{\mu_n(\cdot)\!: n\in\nat}$ in the Skorohod space $\DDD\big([0, \infty), \PPP_1(\real\times \{+,-\})\big)$ solves \eqref{originalMVE} almost surely. That is, with $\mu_{n_k}(\cdot) \cdinf{k} \mu(\cdot)$ in $\DDD\big([0, \infty), \PPP_1(\real\times \{+,-\})\big)$, we have, almost surely,
    \begin{align*}
        A_{t, f}\bigparens{\mu(\cdot)} = 0
    \end{align*}
   for every $t\geq 0$ and $f\in\HHH$.
\end{corollary}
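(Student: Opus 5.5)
The plan is to combine Theorem \ref{martingale1} and Theorem \ref{martingale2} through uniqueness of weak limits in $\real$, and then upgrade from ``for each fixed $(t,f)$ almost surely'' to ``almost surely, for all $(t,f)$ simultaneously'' by a separability and continuity argument.

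\emph{Step 1: fixed $t$ and $f$.} Fix $t\ge 0$ and $f\in\HHH$. By Theorem \ref{martingale1}, $A_{t,f}(\mu_{n_k}(\cdot))\to 0$ in probability, hence in distribution, as $k\to\infty$. By Theorem \ref{martingale2}, $A_{t,f}(\mu_{n_k}(\cdot))\to A_{t,f}(\mu(\cdot))$ in distribution. Limits in distribution on $\real$ are unique, so $A_{t,f}(\mu(\cdot))$ has the law $\delta_0$. Therefore $A_{t,f}(\mu(\cdot))=0$ almost surely.

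\emph{Step 2: countable family.} Replacing $f$ by $f-f(0,+)$ changes nothing, since constants cancel both in $\inner{f,\mu(t)}-\inner{f,\mu(0)}$ and in $\LLL f$. So we may take $f(0,+)=0$. Such $1$-Lipschitz functions are uniformly bounded on compacts, and we can pick a countable family $\mathcal{D}\subset\HHH$ that is dense in $\HHH$ for locally uniform convergence, for example via Arzel\`a--Ascoli on $[-m,m]\times\{+,-\}$ for each $m$. Intersecting the probability-one events from Step 1 over $t\in\rational_{\ge0}$ and $f\in\mathcal{D}$ gives a single probability-one event $\Omega_0$. On $\Omega_0$, we may also assume $\mu\in\CCC([0,\infty),\PPP_1)$ by Corollary \ref{tightD}, and $\mu\in\TTT$ by Lemma \ref{uniformLemma}\ref{limiting}.

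\emph{Step 3: extending to all $t$ and $f$.} First, for fixed $f$, the map $t\mapsto A_{t,f}(\mu(\cdot))$ is continuous. The term $\inner{f,\mu(t)}$ is continuous in $t$ by Kantorovich--Rubinstein duality and the $\WWW_1$-continuity of $\mu$. The integrand $\inner{\LLL f(\cdot,\mu(s)),\mu(s)}$ is bounded by $v\,\expc(Z)+\alpha$, since $\abs{g_f}\le \expc Z$ and $\abs{h_f}\le 1$, so the time integral is Lipschitz in $t$. This extends $A_{t,f}(\mu(\cdot))=0$ from rational $t$ to all $t$.

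Second, for fixed $t$, suppose $f_j\in\mathcal{D}$ converge to $f$ locally uniformly, with common Lipschitz constant and $f_j(0,+)=0$, so that $\abs{f_j(x,\sigma)}\le \abs{x}+1$. Then $\inner{f_j,\mu(s)}\to\inner{f,\mu(s)}$ by dominated convergence, using the finite first moment of $\mu(s)$. Similarly $g_{f_j}\to g_f$ and $h_{f_j}\to h_f$ pointwise with uniform bounds, so $A_{t,f_j}(\mu(\cdot))\to A_{t,f}(\mu(\cdot))$ by dominated convergence in both $x$ and $s$. Hence $A_{t,f}(\mu(\cdot))=0$ for all $t\ge0$ and $f\in\HHH$ on $\Omega_0$.

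The main obstacle is Step 3. Step 1 and the $t$-extension are routine; the step needing care is the density argument in $f$. One must ensure the countable family approximates every $f\in\HHH$ locally uniformly, and that $\alpha^\pm(s,x)$, though only measurable in $x$, is bounded. Boundedness of $\alpha^\pm$ is exactly what makes the dominated convergence argument go through.
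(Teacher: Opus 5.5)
Your proposal is correct and follows essentially the same route as the paper: uniqueness of weak limits for fixed $(t,f)$, normalization to $f(0,+)=0$, intersection over rational $t$ and a countable locally uniformly dense family, and then extension via continuity in $t$ and approximation in $f$. The only cosmetic difference is in the $f$-approximation step. You justify it by dominated convergence, using $|f(x,\sigma)|\le |x|+1$, $|g_f|\le \mathbb{E}(Z)$, $|h_f|\le 1$ and the boundedness of $\alpha^\pm$, whereas the paper phrases it as truncation to bounded spatial sets combined with finite first moments.
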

\begin{proof}
    Let $t\geq 0$ and $f\in \HHH$ be chosen. Theorem \ref{martingale1} implies that for any subsequence $\curlbr*{\mu_{n_k}\!: k\in\nat}$, we have $A_{t, f}\bigparens{\mu_{n_k}(\cdot)} \cdinf{k} 0$, while Theorem \ref{martingale2} gives us that
    \begin{align*}
        A_{t, f}\bigparens{\mu_{n_k}(\cdot)} \cdinf{k} A_{t, f}\bigparens{\mu(\cdot)}
    \end{align*}
    in $\real$. By the uniqueness of the weak limit (see, for instance, \cite[14]{Billingsley99}), for each $t\geq 0$ and $f\in\HHH$, 
    \begin{align*}
        A_{t,f}\bigparens{\mu(\cdot)} \eqas 0.
    \end{align*}
    To extend this almost-sure equality simultaneously to every $t\geq 0$ and $f\in\HHH$, note that for every $c\in\real$, since $A_{t,f + c} \bigparens{\mu(\cdot)} = A_{t, f}\bigparens{\mu(\cdot)}$, it suffices to consider functions $f\in\HHH$ such that $f(0, +) = 0$. To finish this proof, apply the preceding argument to every $t\in\rational_+$ and $f$ in a countable and locally uniformly dense subset $\HHH_0$ of $\curlbr*{f\in\HHH\!: f(0, +) = 0}$, and intersect the resulting (countable) probability-one events. Continuity in $t$ extends the identity to every $t \geq 0$. Moreover, if $f_m \in \HHH_0$ converges locally uniformly to $f \in \HHH$, then $|f_m(x,\sigma)| \vee |f(x,\sigma)|\leq |x|+1$. Truncating to bounded spatial sets and using the finite first moments of $\mu(0)$ and $\mu(t)$, together with the assumption that $\expc(Z) = 1 < \infty$ for the jump term, therefore gives $A_{t,f_m}\bigparens{\mu(\cdot)} \toinf{m} A_{t,f} \bigparens{\mu(\cdot)}$, and our claim follows.
\end{proof}

\subsection{Uniqueness of Solution to the MVE}
The next step in our proof is to establish conditions under which the MVE admits a unique solution. The argument below proceeds in a cumulative-distribution-function metric, rather than directly in the dual Lipschitz ($\WWW_1$) metric, as manifested in the following result:
\begin{theorem}
\label{grownwallstuff}
    Suppose $\mu^1(\cdot)$ and $\mu^2(\cdot)$ are two elements of $\TTT$ that solve \eqref{originalMVE}, with initial conditions $\mu^1_0 = \mu^1(0)$ and $\mu^2_0 = \mu^2(0)$, respectively. For $i\in \{1,2\}$, define the type-wise (extended) cumulative distribution functions by $F_i^{\pm}(t,x) := \mu^{\pm,i}\bigparens{t, (-\infty, x]}$, where $\mu^{\pm,i}(t,A) := \mu^i\bigparens{t, A\times \{\pm\}}$ for $A\in \borel(\real)$. In addition, define
    \begin{align*}
        \Delta_{\pm}(s,x) := F_1^{\pm}(s,x) - F_2^{\pm}(s,x),
    \end{align*}
    and let $D_{\pm}(t) := \sup_{x\in \real} \abs*{\Delta_{\pm}(t, x)}$. Also, let $D(t) := D_+(t) + D_-(t)$. Then there exists some constant $c > 0$, depending only on $v_+, v_-, \alpha^+, \alpha^-$, and $\lnorm*{\varphi}_{\infty}$, such that for any $t \geq 0$,
    \begin{align}
    \label{babyGronwall}
        D(t) \leq D(0) \cdot \e^{ct}.
    \end{align}
    In particular, if $\mu^1(0)=\mu^2(0)$, then $\mu^1(t)=\mu^2(t)$ for every $t\geq 0$.
\end{theorem}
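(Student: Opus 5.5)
The plan is to obtain a closed evolution equation for the type-wise distribution functions $F_i^\pm(t,x)$ and then close a Gr\"onwall inequality in the sup-norm $D(t)$.

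\textbf{Step 1: evolution equation for the distribution functions.} Fix $x\in\real$. I would apply \eqref{originalMVE} to Lipschitz approximations $f_m(\cdot,\pm)$ of $\ind_{(-\infty,x]}(\cdot)\ind_{\{\sigma=\pm\}}$, for instance piecewise-linear ramps of width $1/m$; after rescaling these lie in $\HHH$, and the MVE is linear in $f$. Since $\mu^i\in\TTT$, every $\mu^i(s)$ is atomless, uniformly for $s\in[0,t]$. So I can let $m\to\infty$ by dominated convergence in both $y$ and $s$. Writing $\bar J(u):=\prob(Z>u)$ and $\rho_i^\pm(t):=F_i^\pm(t,\infty)$, this should give
\begin{align*}
F_i^+(t,x)-F_i^+(0,x)=\int_0^t\Big[-v_+\!\int_{(-\infty,x]}\!\bar J(x-y)\,F_i^+(s,\di y)-\!\int_{(-\infty,x]}\!\alpha_i^+(s,y)\,F_i^+(s,\di y)+\!\int_{(-\infty,x]}\!\alpha_i^-(s,y)\,F_i^-(s,\di y)\Big]\di s .
\end{align*}
Here $\alpha_i^+(s,y)=\alpha^+-\varphi\big(\rho_i^-(s)-F_i^-(s,y)\big)$, and $\alpha_i^-(s,y)$ is defined symmetrically. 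The equation for $F_i^-$ is analogous.

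\textbf{Step 2: bounding the difference.} I would subtract the equations for $i=1,2$ and bound each term pointwise in $x$ by a constant times $D(s)$.

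For the jump term, integrate by parts in Stieltjes form. Since $\Delta_+(s,-\infty)=0$ and $\bar J(0)=1$,
\begin{align*}
\int_{(-\infty,x]}\bar J(x-y)\,\Delta_+(s,\di y)=\Delta_+(s,x)-\int_{(-\infty,x]}\Delta_+(s,y)\,\di_y\bar J(x-y).
\end{align*}
Because $y\mapsto \bar J(x-y)$ is monotone with total variation at most $1$, this term is bounded by $2D_+(s)$.

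For a flip term, split
\begin{align*}
\int \alpha_1^+\,\di F_1^+-\int\alpha_2^+\,\di F_2^+=\int(\alpha_1^+-\alpha_2^+)\,\di F_1^+ +\int\alpha_2^+\,\di\Delta_+ .
\end{align*}
\begin{itemize}
\item In the first piece, $\varphi$ is $1$-Lipschitz and $|\rho_1^--\rho_2^-|\le D_-$, so $|\alpha_1^+-\alpha_2^+|\le 2D_-(s)$. Since the total mass is at most $1$, this piece is at most $2D_-(s)$.
\item For the second piece, note that $y\mapsto\alpha_2^+(s,y)$ is $\varphi$ composed with a monotone function taking values in $[0,1]$. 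Hence it is bounded by $\alpha^++\lnorm*{\varphi}_\infty$ and has total variation at most $1$. Integrating by parts, with no boundary atoms because $\mu^i\in\TTT$, bounds this piece by $(\alpha^++\lnorm*{\varphi}_\infty+1)\,D_+(s)$.
\end{itemize}
The $\alpha^-$ terms are handled the same way.

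Collecting the bounds gives $|\Delta_\pm(t,x)|\le|\Delta_\pm(0,x)|+c\int_0^tD(s)\,\di s$, with $c$ depending only on $v_\pm$, $\alpha^\pm$ and $\lnorm*{\varphi}_\infty$.

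\textbf{Step 3: Gr\"onwall and uniqueness.} Taking the supremum over $x$ and summing over $\pm$ gives $D(t)\le D(0)+c\int_0^tD(s)\,\di s$. Here $D$ is bounded by $2$, and it is measurable, for example as a supremum over rational $x$, using right-continuity of the distribution functions. Gr\"onwall's inequality then yields \eqref{babyGronwall}. If $\mu^1(0)=\mu^2(0)$, then $D\equiv0$, so the type-wise distribution functions agree at all times, and hence $\mu^1(t)=\mu^2(t)$.

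\textbf{Main obstacle.} The delicate step is Step 1 together with the Stieltjes integrations by parts. These require care with the discontinuous indicators and with integrands such as $\alpha_2^+$ that are only of bounded variation rather than Lipschitz. The no-atom property encoded in $\TTT$ is what justifies both the passage $f_m\to$ indicator and the absence of boundary and jump corrections. I would first verify these manipulations carefully for a single fixed $s$, and then integrate in $s$ using the uniform bounds.
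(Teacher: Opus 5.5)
Your proposal is correct and follows essentially the same route as the paper. The paper also uses ramp test functions, passes them to indicators via the no-atom property of $\TTT$, bounds the jump terms in the Kolmogorov--Smirnov sup-norm, splits each flip term into a rate-difference piece bounded by $2D_\mp(s)$ and a Stieltjes integration-by-parts piece controlled by the sup and total variation of the switching rate, and closes with Gr\"onwall.

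The differences are cosmetic. You rewrite the jump term through the tail $\bar J$ and integrate by parts; only $0\le \bar J\le 1$ is actually needed, so $\bar J(0)=1$ is unnecessary. The paper instead bounds $\int|\Delta_\pm(s,x-z)-\Delta_\pm(s,x)|\,J(\di z)\le 2D_\pm(s)$ directly. You also attach the rate difference to $\di F_1^+$ rather than to $\mu^{+,2}$.
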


\begin{proof}
    Let $t\geq 0$ be chosen. For $i\in \{1,2\}$, let
      \begin{align*}
        \alpha^{\pm,i}(s,x) := \alpha^\pm\bigparens{x, \mu^i(s)} = \alpha^{\pm} \mp \varphi\parens*{\int \ind_{\{z>x\}}\mu^{\mp,i}(s, \di z)}.
     \end{align*}
    It suffices to show that there is a constant $c > 0$ such that
      \begin{align}
    \label{eq:preGronwallD}
        D(t) \leq D(0) + c\int\limits_0^t D(s) \di s,
    \end{align}
    from which \eqref{babyGronwall} follows from Gr\"onwall's Inequality.

    First, let $x\in \real$ be chosen, and let $\eta \in (0,1)$. Let $\psi_{x,\eta}\!: \real\to [0,1]$ be a non-increasing, $\dfrac{1}{\eta}$-Lipschitz function such that
    \begin{align*}
        \ind_{(-\infty, x-\eta]} \leq \psi_{x,\eta} \leq \ind_{(-\infty, x+\eta]}.
    \end{align*}
    Also, for each $\sigma\in \{+,-\}$, define $f^\sigma_{x,\eta}\!: \real\times \{+,-\} \to \real$ by
      \begin{align*}
        f_{x,\eta}^{\sigma}(y,\tau) := \psi_{x,\eta}(y)\ind_{\{\tau = \sigma\}}.
    \end{align*}
     Then since $\eta\cdot f_{x,\eta}^{\sigma}\in \HHH$, and since $\mu^i(\cdot)$ solves \eqref{originalMVE},
      \begin{align}
    \label{eq:MVEtestf}
        \biginner{f_{x,\eta}^{\sigma}, \mu^i(t)} &= \biginner{f_{x,\eta}^{\sigma}, \mu^i(0)} + \int\limits_0^t \Biginner{\LLL f_{x,\eta}^{\sigma}\bigparens{\cdot, \mu^i(s)}, \mu^i(s)} \di s.
    \end{align}
    Next, since $\mu^i\in \TTT$,  for each $t\geq0$,
    \begin{align*}
        \lim_{\eta\to 0} \sup_{s\in[0, t]} \sup_{x\in\real} \mu^i\bigparens{s, (x-\eta, x+\eta)\times \{+,-\}} = 0,
    \end{align*}
      which implies that for any $t\geq 0$,
    \begin{align*}
        \sup_{s\in[0,t]}\Bigabs{\biginner{f_{x,\eta}^{\sigma}, \mu^i(s)} - F_i^{\sigma}(s,x)} \xrightarrow{\sspa \eta\to 0 \sspa} 0.
    \end{align*}
    A similar estimate justifies passage to the limit in the jump and flip terms in \eqref{eq:MVEtestf}. Consequently, for each $x\in \real$ and $i\in \{1,2\}$, with $J(\di z)$ denoting the measure associated with the jump distribution $J$,
      \begin{align}
    \label{eq:FplusEquation}
        F_i^+(t,x) &= F_i^+(0,x) + v_+ \int\limits_0^t \int\limits_{[0,\infty)} \bigparens{F_i^+(s, x-z) - F_i^+(s, x)} J(\di z) \di s \nonumber \\
        &\qquad + \int\limits_0^t \int\limits_{(-\infty,x]} \alpha^{-,i}(s,y) \mu^{-,i}(s,\di y) \di s - \int\limits_0^t \int\limits_{(-\infty,x]} \alpha^{+,i}(s,y) \mu^{+,i}(s,\di y) \di s,
    \end{align}
    and similarly,
      \begin{align}
    \label{eq:FminusEquation}
        F_i^-(t,x) &= F_i^-(0,x) + v_- \int\limits_0^t \int\limits_{[0,\infty)} \bigparens{F_i^-(s, x-z) - F_i^-(s, x)} J(\di z) \di s \nonumber \\
        &\qquad + \int\limits_0^t \int\limits_{(-\infty,x]} \alpha^{+,i}(s,y) \mu^{+,i}(s,\di y) \di s - \int\limits_0^t \int\limits_{(-\infty,x]} \alpha^{-,i}(s,y) \mu^{-,i}(s,\di y) \di s.
    \end{align}
    Next, observe that for each $s \in [0,t]$,
      \begin{align*}
    &\sup_{x\in \real} \abs*{v_\pm \int\limits_{[0,\infty)} \Bigsqbr{\bigparens{F_1^\pm(s, x-z) - F_1^\pm(s, x)} - \bigparens{F_2^\pm(s, x-z) - F_2^\pm(s, x)}} J(\di z)} \\
    &\quad \leq (v_+ + v_-) \sup_{x\in \real} \abs*{\int\limits_{[0,\infty)} \bigsqbr{\Delta_{\pm}(s,x-z) - \Delta_{\pm}(s,x)}J(\di z)} \\
    &\quad \leq (v_+ + v_-)\sup_{x\in \real} \int\limits_{[0,\infty)} \Bigabs{\Delta_\pm(s,x-z) - \Delta_\pm(s,x)}J(\di z) \leq 2(v_+ + v_-) D_\pm(s).
    \end{align*}
    Hence, with $c_1 := 2(v_+ + v_-)$,
      \begin{align}
    \label{eq:jumpbound}
       &\sum_{\sigma \in \{+,-\}}\sup_{x\in \real} \abs*{v_\sigma \int\limits_0^t \int\limits_{[0,\infty)} \Bigsqbr{\bigparens{F_1^\sigma(s, x-z) - F_1^\sigma(s, x)} - \bigparens{F_2^\sigma(s, x-z) - F_2^\sigma(s, x)}} J(\di z) \di s} \nonumber \\
       &\qquad \leq c_1 \int\limits_0^t D(s) \di s,
    \end{align}
    which gives us an appropriate bound on the ``jump'' terms of \eqref{eq:FplusEquation} and \eqref{eq:FminusEquation}. It remains to bound the ``gain'' and/or ``loss'' terms (which we refer to as the ``flip'' terms) of such equations. We shall treat the ``loss'' term of \eqref{eq:FplusEquation}, with the understanding that the remaining ``flip'' terms can be bounded in a similar fashion. To that end, for $i\in \{1, 2\}$, let $G_i^{\pm}(s,x) := \int\limits_{(-\infty,x]} \alpha^{\pm,i}(s,y) \mu^{\pm,i}(s,\di y)$, and note that
      \begin{align*}
        G_1^+(s,x) - G_2^+(s,x) &= \int\limits_{(-\infty,x]} \alpha^{+,1}(s,y) \parens*{\mu^{+,1} - \mu^{+,2}}(s,\di y) \\
        &\qquad + \int\limits_{(-\infty,x]} \sqbr*{\alpha^{+,1}(s,y)-\alpha^{+,2}(s,y)} \mu^{+,2}(s, \di y) \\
        &= \int\limits_{(-\infty,x]} \alpha^{+,1}(s,y) \Delta_+(s,\di y) + \int\limits_{(-\infty,x]} \sqbr*{\alpha^{+,1}(s,y)-\alpha^{+,2}(s,y)} \mu^{+,2}(s, \di y) \\
        &=: I_1(s,x) + I_2(s,x).
    \end{align*}

    We first bound $I_1(s, x)$. Let $s\in[0, t]$ be chosen. Since $\mu^1(\cdot) \in \TTT$, it follows that the spatial marginal of $\mu^1(s)$ is atomless; thus, the function $\mu^{-,1}\bigparens{s, (y, \infty)}$ is continuous and non-increasing (in $y$), with total variation bounded above by $\mu^{-,1}(s, \real) \leq 1$. This, in turn, implies that $\alpha^{+,1}(s, y)$ is also a continuous function in $y$, with total variation also bounded above by $1$. Moreover, $\Delta_+(s, y)$ is a continuous function in $y$, with \(\lim_{y\to-\infty} \Delta_+(s,y) = 0\); thus, Stieltjes integration-by-parts gives
    \begin{align*}
        I_1(s, x) = \alpha^{+,1}(s, x) \Delta_+(s, x) - \int\limits_{(-\infty, x]} \Delta_+(s, y) \partial_y\alpha^{+,1}(s, y),
    \end{align*}
      where $\partial_y\alpha^{+,1}(s, y)$ denotes the signed Lebesgue-Stieltjes measure induced by the bounded-variation function $y \mapsto \alpha^{+,1}(s, y)$. Hence, with $\lnorm*{f}_\mathrm{TV}$ denoting the total variation of a real-valued function $f$,
     \begin{align}
    \label{eq:newI1bound}
        \abs*{I_1(s, x)} &\leq \lnorm*{\alpha^{+,1}(s, \cdot)}_\infty D_+(s) + D_+(s) \lnorm*{\alpha^{+,1}(s, \cdot)}_\mathrm{TV} \leq \parens*{\alpha^+ + \lnorm*{\varphi}_\infty + 1} D_+(s),
    \end{align}
    and since the last expression in \eqref{eq:newI1bound} does not depend on $x\in\real$, we have
      \begin{align}
    \label{eq:I1bound}
        \sup_{x\in\real} |I_1(s, x)| \leq \parens*{\alpha^+ + \lnorm*{\varphi}_\infty + 1} D_+(s).
    \end{align}
    Next, we bound $I_2(s, x)$. Note that for $i\in \{1, 2\}$, $s\in [0, \infty)$, and $y\in\real$,
      \begin{align*}
        \alpha^{+, i}(s, y) = \alpha^+ - \varphi\Bigparens{\mu^{-,i}(s, \real)- \mu^{-,i}\bigparens{s, (-\infty, y]}} = \alpha^+ - \varphi\Bigparens{\mu^{-,i}\bigparens{s, (y, \infty)}},
    \end{align*}
    and again, since $\varphi$ is $1$-Lipschitz,
    \begin{align*}
        \abs*{\alpha^{+,1}(s,y)-\alpha^{+,2}(s,y)} &\leq \abs*{\mu^{-,1}\bigparens{s,(y,\infty)} - \mu^{-,2}\bigparens{s,(y,\infty)}} \\
        &= \abs*{\mu^{-,1}(s,\real) - \mu^{-,2}(s,\real) - \Delta_-(s,y)} \\
        &\leq \abs*{\mu^{-,1}(s,\real) - \mu^{-,2}(s,\real)} + \abs*{\Delta_-(s,y)}.
    \end{align*}
    Clearly, $\abs*{\mu^{-,1}(s,\real) - \mu^{-,2}(s,\real)} = \lim_{x\to\infty} \abs*{\Delta_-(s, x)} \leq D_-(s)$, and since $\abs*{\Delta_-(s,y)} \leq D_-(s)$, by the Triangle Inequality, we have $\Bigabs{\alpha^{+,1}(s,y)-\alpha^{+,2}(s,y)} \leq 2D_-(s)$. Since $\mu^{+,2}(s,\real)\leq 1$, it follows that
    \begin{align}
    \label{eq:I2bound}
        \sup_{x\in \real}\bigabs{I_2(s,x)} \leq 2D_-(s).
    \end{align}
    Combining \eqref{eq:I1bound} and \eqref{eq:I2bound}, we see that
      \begin{align}
    \label{eq:Gplusbound}
        \sup_{x\in \real}\Bigabs{G_1^+(s,x) - G_2^+(s,x)} &\leq \sup_{x\in\real} \abs*{I_1(s, x)} + \sup_{x\in\real} \abs*{I_2(s, x)} \nonumber \\
        &\leq \parens*{\alpha^+ + \lnorm*{\varphi}_\infty + 1} D_+(s) + 2D_-(s) \nonumber \\
        &\leq \parens*{\alpha^+ + \lnorm*{\varphi}_\infty + 3} \bigparens{D_+(s) + D_-(s)} = \parens*{\alpha^+ + \lnorm*{\varphi}_\infty + 3} D(s).
    \end{align}
    By a parallel argument,
    \begin{align}
    \label{eq:Gminusbound}
        \sup_{x\in \real}\Bigabs{G_1^-(s,x) - G_2^-(s,x)} \leq \parens*{\alpha^- + \lnorm*{\varphi}_\infty + 3} D(s).
    \end{align}
    Now, note that in subtracting \eqref{eq:FplusEquation} for $i = 1$ from \eqref{eq:FplusEquation} for $i=2$, the ``flip'' contribution is
      \begin{align}
    \label{flipContribute1}
        H^+(t, x) := \int\limits_0^t \Bigsqbr{\bigparens{G^+_1(s, x) - G^+_2(s, x)} - \bigparens{G^-_1(s, x) - G^-_2(s, x)}} \di s,
    \end{align}
    and in subtracting \eqref{eq:FminusEquation} for $i = 1$ from \eqref{eq:FminusEquation} for $i=2$, the ``flip'' contribution is
      \begin{align}
    \label{flipContribute2}
        H^-(t, x) := \int\limits_0^t \Bigsqbr{\bigparens{G^-_1(s, x) - G^-_2(s, x)} - \bigparens{G^+_1(s, x) - G^+_2(s, x)}} \di s = -H^+(t, x),
    \end{align}
    so it suffices to bound, say, $H^+(t, x)$. By applying the Triangle Inequality and taking supremum over all $x\in\real$ of both sides of \eqref{flipContribute1}, we obtain
      \begin{align*}
        \sup_{x\in\real} \abs*{H^+(t, x)} &\leq \int\limits_0^t \Bigsqbr{\sup_{x\in\real} \bigabs{G^+_1(s, x) - G^+_2(s, x)} + \sup_{x\in\real} \bigabs{G^-_1(s, x) - G^-_2(s, x)}} \di s \\
        &\leq  \parens*{\alpha^+ + \alpha^- + 2\lnorm*{\varphi}_\infty + 6} \int\limits_0^t D(s) \di s,
    \end{align*}
    and naturally, $\sup_{x\in\real} \abs*{H^-(t, x)} \leq \parens*{\alpha^+ + \alpha^- + 2\lnorm*{\varphi}_\infty + 6} \int\limits_0^t D(s) \di s$ as well. Thus, the overall contribution of the ``flip'' terms over \eqref{eq:FplusEquation} and \eqref{eq:FminusEquation} can be bounded by a multiple of $D(s)$; more specifically,
      \begin{align}
    \label{eq:flipbound}
        \sup_{x\in\real} |H^+(t, x)| + \sup_{x\in\real} |H^-(t, x)| \leq 2 \parens*{\alpha^+ + \alpha^- + 2\lnorm*{\varphi}_\infty + 6} \int\limits_0^t D(s) \di s.
    \end{align}
    Finally, let $c_2 = 2 \parens*{\alpha^+ + \alpha^- + 2\lnorm*{\varphi}_\infty + 6}$. Then by subtracting \eqref{eq:FplusEquation} for $i=2$ from \eqref{eq:FplusEquation} for $i=1$, subtracting \eqref{eq:FminusEquation} for $i=2$ from \eqref{eq:FminusEquation} for $i=1$, taking supremum in $x$, applying \eqref{eq:jumpbound} and \eqref{eq:flipbound}, and combining the two resulting inequalities, we obtain
      \begin{align*}
        D(t) \leq D(0) + (c_1+c_2)\int\limits_0^t D(s) \di s,
      \end{align*}
    which is exactly \eqref{eq:preGronwallD} with $c:=c_1+c_2$. Applying Gr\"onwall's Inequality gives us that for each $t\geq 0$,
    \begin{align*}
        D(t) \leq D(0)\cdot \e^{ct}.
    \end{align*}
    In particular, if $\mu^1(0)=\mu^2(0)$, then $D(0)=0$, so for each $t\geq 0$, $D(t)=0$ as well. That is, for each $t\geq 0$, $x\in\real$, and $\sigma \in \{+,-\}$, $F_1^{\sigma}(t,x) = F_2^{\sigma}(t,x)$, or equivalently, for each $\sigma\in \{+,-\}$ and $t\geq 0$, $\mu^{\sigma,1}(t)=\mu^{\sigma,2}(t)$, which implies that for each $t\geq 0$, $\mu^1(t)=\mu^2(t)$. That completes our proof.
\end{proof}
  To tie everything together, we present the proof of Theorem \ref{fluidTheorem}:
\begin{proof}[Proof of Theorem \ref{fluidTheorem}]
    From Corollary \ref{tightD}, we know that the sequence $\curlbr*{\mu_n(\cdot)\!: n\in\nat}$ is $\CCC$-tight in the space $\DDD\big([0, \infty), \PPP_1(\real\times \{+,-\})\big)$, which proves part \ref{31a}. Also, from Corollary \ref{initialization}, each subsequential limit point $\mu(\cdot)$ of $\curlbr*{\mu_n(\cdot)\!: n\in\nat}$ must be a solution to \eqref{originalMVE}, and it must lie in $\TTT$ by Lemma \ref{uniformLemma}. This proves part \ref{31b}. Part \ref{31c} follows from part \ref{31a}, the assumed convergence of $\mu_n(0)$ to a deterministic $\nu$, and the uniqueness of solution to \eqref{originalMVE} in the class $\TTT$ proved in Theorem \ref{grownwallstuff}.
\end{proof}

 \subsection{Propagation of Chaos}
 \begin{proof}[Proof of Corollary \ref{POC}]
    We first show that Assumptions \ref{assumption2} and \ref{assumption3} hold under the assumptions of this result. First, since $\Gamma \in \PPP_1(\real\times \{+,-\})$, Assumption \ref{assumption2} follows from Markov's Inequality, while Assumption \ref{assumption3} follows from the continuity (and hence uniform continuity) of the spatial cumulative distribution function of $\Gamma$, the Glivenko--Cantelli Theorem, and the fact that independent, identically distributed locations with an atomless distribution are almost surely tie-free.

    In addition, since $\curlbr*{\XX^n_i(0)\!: i\in[n]}$ are independent and identically distributed with deterministic distribution $\Gamma$, $\mu_n(0) \cpinf{n} \Gamma$ in $\PPP_1(\real\times \{+,-\})$. Then the first assertion is established by applying Theorem \ref{fluidTheorem}\ref{31c}. 

    Also, as noted before the proof of Theorem \ref{martingale1}, the spatial configuration is almost surely tie-free at any time. Hence, the tie-breaking mechanism is invoked only on a null set; on its complement, the dynamics are permutation-equivariant. Since the initial coordinates are independent and identically distributed, $\Law\bigparens{\XX^n(t)}$ is symmetric. Then the second assertion follows from the first assertion and Lemma 3.19 in \cite{CD22}.
\end{proof}
 \section{Long-Time Behavior: Traveling Wave Solution}

Now, we embark on an investigation of the long-time behavior of the MVE \eqref{originalMVE} by obtaining traveling wave solutions. We break the discussion into several smaller parts.

\subsection{An Integro-Differential Formulation of the Traveling Wave Equation}
The following lemma gives a reformulation of the traveling wave equation derived from \eqref{originalFluid} as a  non-linear, non-local integro-differential   equation:
\begin{lemma}
\label{integrodiff}
    Suppose that a traveling wave solution to \eqref{originalFluid} exists. Then the associated traveling wave densities, defined as $\rho_\pm(z) := F_\pm'(z)$, satisfy
    \begin{align}
    \label{newfluid}
        v_\pm \int\limits_{-\infty}^z \phi(z-y) \rho_\pm(y) \di y = \parens*{v_\pm + \beta^\pm(z)} \rho_\pm(z) - \beta^\mp(z) \rho_\mp(z) - \gamma \rho'_\pm(z)
    \end{align}
     for every $z\in\real$, where $\phi$ denotes the density of $J$, and for each $z\in\real$, we define
      \begin{align*}
        \beta^+(z) &:= \alpha^+ - \int\limits_z^\infty \rho_-(x)\di x = \alpha^+ - \rho^- + F_-(z), \\
        \intertext{and}
        \beta^-(z) &:= \alpha^- + \int\limits_z^\infty \rho_+(x)\di x = \alpha^- + \rho^+ - F_+(z).
    \end{align*}
\end{lemma}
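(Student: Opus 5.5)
The strategy is to start from the CDF form of the MVE obtained in the proof of Theorem \ref{grownwallstuff}, namely \eqref{eq:FplusEquation}--\eqref{eq:FminusEquation}, substitute the traveling wave ansatz, and differentiate twice. Note that $\varphi$ is the identity here. With $\mu^\pm(t,(-\infty,x]) = F_\pm(x-\gamma t)$, where $F_\pm$ is $\CCC^2$, the solution lies in $\TTT$, since it is atomless with bounded densities locally uniformly in time. Hence the derivation of \eqref{eq:FplusEquation} applies verbatim: test against $f^\sigma_{x,\eta}$ and let $\eta\to0$. This gives, for every $t$ and $x$,
\begin{align*}
F_+(x-\gamma t) - F_+(x) = \int_0^t \Big[ v_+\!\int_{[0,\infty)}\!\big(F_+(x-\gamma s - z)-F_+(x-\gamma s)\big)J(\di z) + \int_{(-\infty,x]}\!\big(\alpha^-(s,y)\mu^-(s,\di y) - \alpha^+(s,y)\mu^+(s,\di y)\big)\Big]\di s .
\end{align*}
The analogous identity holds for $F_-$.

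First, express the flip rates in the moving frame. Since $\alpha^+(s,y) = \alpha^+ - \mu^-(s,(y,\infty)) = \alpha^+ - \rho^- + F_-(y-\gamma s) = \beta^+(y-\gamma s)$, and similarly $\alpha^-(s,y) = \beta^-(y-\gamma s)$, the substitution $w = y-\gamma s$ turns the flip integrals into $\int_{-\infty}^{x-\gamma s}\big(\beta^-(w)\rho_-(w) - \beta^+(w)\rho_+(w)\big)\di w$. All integrands are continuous in $s$, so we may differentiate in $t$ at $t=0$ and then set $z=x$. This yields the first-order (integrated) identity
\begin{align*}
-\gamma \rho_+(z) = v_+\int_{[0,\infty)}\big(F_+(z-u)-F_+(z)\big)J(\di u) + \int_{-\infty}^z \big(\beta^-\rho_- - \beta^+\rho_+\big)(w)\di w ,
\end{align*}
and the companion identity for $-$ with the roles of $\pm$ swapped. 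Time translation invariance means that differentiating at $t=0$ loses nothing.

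Second, differentiate in $z$. Assumption \ref{assumption4} gives $J(\di u) = \phi(u)\di u$. Since $F_\pm$ is $\CCC^1$ with bounded derivative $\rho_\pm$, dominated convergence allows differentiation under the integral:
\[
\frac{\di}{\di z}\int_{[0,\infty)}\big(F_+(z-u)-F_+(z)\big)\phi(u)\di u = \int_0^\infty \rho_+(z-u)\phi(u)\di u - \rho_+(z) = \int_{-\infty}^z \phi(z-y)\rho_+(y)\di y - \rho_+(z),
\]
using $\int\phi=1$. The flip term differentiates to $\beta^-(z)\rho_-(z) - \beta^+(z)\rho_+(z)$ by the fundamental theorem of calculus, because the integrand is continuous. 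The left side becomes $-\gamma\rho_+'(z)$, which exists since $H\in\CCC^2$. Rearranging gives
\[
v_+\int_{-\infty}^z\phi(z-y)\rho_+(y)\di y = (v_+ + \beta^+(z))\rho_+(z) - \beta^-(z)\rho_-(z) - \gamma\rho_+'(z),
\]
which is \eqref{newfluid} for the $+$ sign. The $-$ equation follows identically from \eqref{eq:FminusEquation}, where the gain and loss terms are interchanged. The alternative expressions for $\beta^\pm$ in terms of $\rho^\mp$ and $F_\mp$ are simply $\int_z^\infty\rho_\mp = \rho^\mp - F_\mp(z)$.

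The main obstacle is justifying the passage from the weak formulation \eqref{originalFluid} to the pointwise CDF identity. This requires checking that the traveling wave measure lies in $\TTT$ and that the limit $\eta\to0$ in the flip and jump terms holds for a merely $\CCC^2$ profile. The other point needing care is interchanging the derivative with the convolution integral, which uses boundedness of $\rho_\pm$ (it is continuous and integrable, and bounded via $\CCC^2$ together with its limits at $\pm\infty$). If boundedness is not immediate, I would instead differentiate the convolution in its form $\int F_+(y)\phi(z-y)\di y$ after an integration by parts, using only that $\phi$ is bounded.
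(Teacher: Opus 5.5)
Your proof is correct, but it takes a different route from the paper's. The paper tests \eqref{originalFluid} against compactly supported $\CCC^1$ functions and differentiates in $t$. It then moves the jump operator onto the density by Fubini and uses continuity to obtain the pointwise forward equation \eqref{newerFluid4} for $\rho_\pm(t,x)$. Only at that point does it substitute $\rho_\pm(t,x)=\rho_\pm(x-\gamma t)$. You stay one level of integration lower: you reuse the distribution-function identity \eqref{eq:FplusEquation} from the uniqueness proof, insert the ansatz first, and differentiate once in $t$ (at $t=0$) and once in $z$.

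Your route buys two things. First, your intermediate identity
\[
-\gamma\rho_+(z)=v_+\int_{[0,\infty)}\bigl(F_+(z-u)-F_+(z)\bigr)J(\di u)+\int_{-\infty}^{z}\bigl(\beta^-\rho_--\beta^+\rho_+\bigr)(w)\di w
\]
has a right-hand side bounded uniformly in $z$, by a constant depending only on $v_+$ and $\alpha^\pm$. Since $\gamma>0$, it therefore proves that $\rho_+$ is bounded, and likewise $\rho_-$. That is the right justification for your dominated-convergence step. Your appeal to limits of $\rho_\pm$ at $\pm\infty$ is unfounded, since an integrable $\CCC^1$ function need not have such limits. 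The same bound also gives the continuity of $x\mapsto\int\phi(u)\rho_+(x-u)\di u$, which the paper's ``$A(t,\cdot)$ is continuous'' step uses without comment. Second, for $\phi(u)=\e^{-u}$ your identity is exactly \eqref{fluid3} with $C^+=0$. It thus bypasses the differentiate-then-reintegrate step and the identification of $C^+$ in the proof of Theorem \ref{travelwave}\ref{a0}.

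The paper's route, in exchange, avoids the $\eta\to0$ approximation by indicator-type test functions. It also produces the time-dependent equation \eqref{newerFluid4}, which it reuses in the converse part of Lemma \ref{twreversed}.

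One small correction: the $\eta\to0$ passage only needs $\sup_{s\le t}\mu^\circ\bigl(s,(x-\eta,x+\eta)\bigr)\to0$. This follows from the uniform continuity of the continuous distribution functions $F_\pm$. It does not rest on bounded densities, which you had not yet established at that point.
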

\begin{proof}
Let $f\!: \real\to\real$ be a continuously differentiable function with compact support, and let
\begin{align*}
    C_f := \max\curlbr*{1, \lnorm*{f}_\infty, \lnorm*{f'}_\infty}.
\end{align*}
Next, define $f_+(x, \sigma) = \dfrac{f(x)}{C_f} \ind_{\{\sigma = +\}}$ so that $f_+ \in \HHH$. Then
\begin{align*}
    C_f \inner*{f_+, \mu(t)} = \int\limits_\real f(x) \mu^+(t, \di x) = \inner*{f, \mu^+(t)};
\end{align*}
furthermore, since $H_\pm \in \CCC^2$, the maps $t \mapsto \inner*{f, \mu^\pm(t)}$ are continuously differentiable, while the integrand in \eqref{originalFluid} is continuous in $t$. Then, from \eqref{originalFluid}, we obtain
  \begin{align}
\label{newerFuild1}
    &\dfrac{\di}{\di t} \int f(x)\mu^+(t, \di x) \nonumber \\
    &\qquad = v_+ \int \expc[f(x\!+\!Z)\!-\!f(x)] \mu^+ (t, \di x) - \int\alpha^+(t, x) f(x) \mu^+(t, \di x)+ \int\alpha^-(t, x) f(x) \mu^-(t, \di x).
\end{align}
Next, using $\mu^\pm(t, \di x) = \rho_\pm(t, x) \di x$ and the fact that $J$ has density $\phi$, \eqref{newerFuild1} becomes
  \begin{align}
\label{newerFuild2}
    \dfrac{\di}{\di t} \int\limits_\real f(x) \rho_+(t, x)\di x &= v_+ \int \parens*{\int \bigparens{f(x+z) - f(x)} \phi(z) \di z} \rho_+(t, x) \di x \nonumber \\
    &\qquad - \int \alpha^+(t, x) f(x) \rho_+(t, x) \di x + \int \alpha^-(t, x) f(x) \rho_-(t, x) \di x.
\end{align}
Let $y = x+z$ (so $x = y - z$). By Fubini's Theorem, observe that
\begin{align*}
    \int \int f(x+z) \phi(z) \rho_+(t, x) \di z \di x &= \int \int f(y) \phi(z) \rho_+(t, y-z) \di y \di z \\
    &= \int f(y) \parens*{\int \phi(z) \rho_+(t, y - z) \di z} \di y \\
    &=  \int f(x) \parens*{\int \phi(z) \rho_+(t, x - z) \di z} \di x;
\end{align*}
thus,
  \begin{align*}
    \int \parens*{\int \bigparens{f(x+z) - f(x)} \phi(z) \di z} \rho_+(t, x) \di x = \int f(x) \sqbr*{\parens*{\int \phi(z) \rho_+(t, x-z) \di z}- \rho_+(t, x)} \di x.
\end{align*}
In addition, since $\dfrac{\di}{\di t} \int f(x) \rho_+(t, x) \di x = \int f(x) \partial_t \rho_+(t, x) \di x$, \eqref{newerFuild2} is equivalent to
  \begin{align}
\label{newerFluid3}
    \int f(x) A(t, x) \di x = 0,
\end{align}
where
  \begin{align*}
    A(t, x) = \partial_t \rho_+(t, x) - v_+\parens*{\int \phi(z) \rho_+(t, x-z) \di z-\rho_+(t, x)} + \alpha^+(t, x) \rho_+(t, x) - \alpha^-(t, x) \rho_-(t, x).
\end{align*}
The function $A(t, \cdot)$ is continuous, so \eqref{newerFluid3} implies $A(t, x) = 0$ for every $x\in\real$; that is,
  \begin{align}
\label{newerFluid4}
    \partial_t \rho_+(t, x) = v_+\parens*{\int \phi(z) \rho_+(t, x-z) \di z - \rho_+(t, x)} - \alpha^+(t, x) \rho_+(t, x) + \alpha^-(t, x) \rho_-(t, x).
\end{align}
Now, suppose that the traveling wave solution with speed $\gamma$ exists. Then for any $x\in\real$,
\begin{align*}
    \rho_\pm(t, x) = \rho_\pm(x-\gamma t).
\end{align*}
Letting $z := x - \gamma t$, we have $\partial_t \rho_+(t, x) = -\gamma \rho'_+(z)$, and with this, we have $\alpha^\pm(t, x) = \beta^\pm(z)$. Thus, \eqref{newerFluid4} becomes
  \begin{align*}
    -\gamma \rho'_+(z) = v_+ \parens*{\int \phi(y) \rho_+(z-y) \di y - \rho_+(z)} - \beta^+(z) \rho_+(z) + \beta^-(z) \rho_-(z),
\end{align*}
and by rearranging the terms, we get
\begin{align*}
    v_+ \int \phi(y) \rho_+(z-y) \di y = \bigparens{v_+ + \beta^+(z)} \rho_+(z) - \beta^-(z) \rho^-(z) - \gamma \rho'_+(z),
\end{align*}
as stated in \eqref{newfluid}. The other equation from \eqref{newfluid} is obtained via a parallel argument.
\end{proof}

With that, we are now ready to prove the first part of Theorem \ref{travelwave}:
\begin{proof}[Proof of Theorem \ref{travelwave}\ref{a0}]

    Observe that display \eqref{newfluid} gives \textbf{two} equations simultaneously, and we shall work with each equation one at a time. Next, with $Z\sim \expo(1)$, $\phi(w) = \e^{-w}$ for $w\geq 0$, (one equation of) \eqref{newfluid} becomes
      \begin{align}
    \label{fluid2}
        v_+ \int\limits_{-\infty}^z \e^{y-z} F'_+(y) \di y = \parens*{v_+ + \alpha^+ - \rho^- + F_-(z)} F'_+(z) - \parens*{\alpha^- + \rho^+ - F_+(z)}F'_-(z) - \gamma F''_+(z),
    \end{align}
    where we have used the fact that $\rho_\pm(x) = F'_\pm(x)$. Differentiating the left-hand side of \eqref{fluid2} with respect to $z$, we obtain
    \begin{align*}
        \dfrac{\di}{\di z} \sqbr*{v_+ \int\limits_{-\infty}^z \e^{y-z} F'_+(y) \di y} = -v_+ \int\limits_{-\infty}^z \e^{y-z} F'_+(y) \di y + v_+ F'_+(z),
    \end{align*}
    and by using \eqref{fluid2} once again, we have
    \begin{align}
    \label{fluid100}
     &\dfrac{\di}{\di z} \sqbr*{v_+ \int\limits_{-\infty}^z \e^{y-z} F'_+(y) \di y} \nonumber \\
        &\qquad = -\parens*{v_+ +\alpha^+ - \rho^- + F_-(z)} F'_+(z)+ \parens*{\alpha^-+\rho^+ -F_+(z)}F'_-(z)+\gamma F''_+(z)+v_+ F'_+(z) \nonumber \\
        &\qquad = \parens*{\rho^- -\alpha^+ - F_-(z)} F'_+(z) + \parens*{\alpha^- + \rho^+ - F_+(z)}F'_-(z) + \gamma F''_+(z) \nonumber \\
        &\qquad = \parens*{\rho^- -\alpha^+}F'_+(z) + \parens*{\alpha^- + \rho^+}F'_-(z) - \parens*{F'_+(z) F_-(z) + F_+(z) F'_-(z)}\!+\!\gamma F''_+(z).
    \end{align}
    By ``reversing the process'' and anti-differentiating both sides of \eqref{fluid100} (again, with respect to $z$), we get that for some constant $C^+$,
      \begin{align}
    \label{fluid3}
        v_+ \int\limits_{-\infty}^z \e^{y-z} F'_+(y) \di y = \parens*{\rho^- -\alpha^+}F_+(z) + \parens*{\alpha^- + \rho^+}F_-(z) - F_+(z) F_-(z) + \gamma F'_+(z) + C^+.
    \end{align}
    Next, by Stieltjes integration-by-parts,
      \begin{equation}
    \label{c0}
        v_+ \int\limits_{-\infty}^z\!\e^{y-z} F'_+(y) \di y = v_+F_+(z) - v_+\int\limits_{-\infty}^z \e^{y-z} F_+(y) \di y =  v_+F_+(z) - v_+\int\limits_0^{\infty} \e^{-u} F_+(z-u) \di u \xrightarrow{\sspa z\to\pm\infty \sspa} 0
    \end{equation}
    using the Dominated Convergence Theorem. Using this, along with the fact that   $\lim_{z\to-\infty} F_\pm(z) = 0$, we conclude $\lim_{z\to-\infty} \gamma F_+'(z)$ exists and equals $-C^+$, which forces $C^+$ to be $0$. Moreover, by \eqref{fluid3} and \eqref{c0}, we can see that $\lim_{z\to\infty} \gamma F_+'(z)$ also exists, and again, it has to be $0$, since $\lim_{z\to\infty} F_+(z) = \rho^+ \in [0,1]$. Using this observation, taking the limit of both sides of \eqref{fluid3} as $z\to\infty$ and applying \eqref{c0} (once again), we obtain
      \begin{equation}
    \label{pmrho}
        0 = \parens*{\rho^- -\alpha^+}\rho^+ + \parens*{\alpha^- + \rho^+}\rho^- - \rho^+\rho^-  = \rho^+\rho^- - \alpha^+\rho^+ + \alpha^-\rho^-.
    \end{equation}
    Substituting $\rho^+ = 1 - \rho^-$ gives $\alpha^+ \rho^+ - \alpha^- \parens*{1-\rho^+} = \rho^+ \parens*{1-\rho^+}$, or equivalently,
    \begin{align}
    \label{fluid8}
        \parens*{\rho^+}^2 + \parens*{\alpha^+ + \alpha^- - 1}\rho^+ - \alpha^- = 0.
    \end{align}
    By solving for $\rho^+$ and observing that $\rho^+ \in [0, 1]$, we see that \eqref{fluid8} admits a single solution,
    \begin{align}
    \label{rhoplus}
        \rho^+ &= \dfrac{1 - \alpha^+ - \alpha^- + \dsqrt{\parens*{1 - \alpha\displaystyle^+ - \alpha\displaystyle^-}^2 + 4\alpha^-}}{2},
    \end{align}
    and accordingly,
    \begin{align}
    \label{rhominus}
        \rho^- &= 1 - \rho^+ = \dfrac{1 + \alpha^+ + \alpha^- - \dsqrt{\parens*{1 - \alpha\displaystyle^+ - \alpha\displaystyle^-}^2 + 4\alpha^-}}{2}.
    \end{align}
    Next, let us obtain the speed of the traveling wave. In this process, we also obtain an identity [see \eqref{fluid7} below] whose implications will be useful in the proof of Theorem \ref{travelwave}\ref{b0}. From \eqref{fluid2} and \eqref{fluid3},
    \begin{align*}
        &\parens*{v_+ + \alpha^+ - \rho^- + F_-(z)} F'_+(z) - \parens*{\alpha^- + \rho^+ - F_+(z)}F'_-(z) - \gamma F''_+(z) \\
        &\qquad = \parens*{\rho^- -\alpha^+}F_+(z) + \parens*{\alpha^- + \rho^+}F_-(z) - F_+(z) F_-(z) + \gamma F'_+(z),
    \end{align*}
    or equivalently,
      \begin{align}
    \label{fluid4}
        &F_+(z) F_-(z) + \parens*{F_+(z) F'_-(z) + F'_+(z) F_-(z)} \nonumber \\
        &\qquad = \parens*{\rho^- - \alpha^+} F_+(z) + \parens*{\rho^+ + \alpha^-} F_-(z) \nonumber \\
        &\qquad \qquad - \parens*{v_+ + \alpha^+ - \gamma - \rho^-} F'_+(z) + \parens*{\alpha^- + \rho^+} F'_-(z) + \gamma F''_+(z).
     \end{align}
    Similarly, by working with the other equation of \eqref{newfluid}, $\lim_{z \to \pm \infty}F_-'(z)=0$ and
    \begin{align*}
        &\parens*{v_- + \alpha^- + \rho^+ - F_+(z)} F'_-(z) - \parens*{\alpha^+ - \rho^- + F_-(z)}F'_+(z) - \gamma F''_-(z) \nonumber\\
        &\qquad = -\parens*{\rho^+ + \alpha^-}F_-(z) + \parens*{\alpha^+ - \rho^-}F_+(z) + F_+(z) F_-(z) + \gamma F'_-(z).
    \end{align*}
    Again, equivalently,
      \begin{align}
    \label{fluid5}
        &F_+(z) F_-(z) + \parens*{F_+(z) F'_-(z) + F'_+(z) F_-(z)} \nonumber \\
        &\qquad = \parens*{\rho^- - \alpha^+} F_+(z) + \parens*{\rho^+ + \alpha^-} F_-(z) \nonumber \\
        &\qquad \qquad+ \parens*{v_- + \alpha^- - \gamma + \rho^+} F'_-(z) - \parens*{\alpha^+ - \rho^-} F'_+(z) - \gamma F''_-(z).
    \end{align}
    After equating the right-hand sides of \eqref{fluid4} and \eqref{fluid5} and performing some algebraic simplifications, we get
    \begin{align}
    \label{fluid6}
        \parens*{v_+ - \gamma} F'_+(z) - \gamma F''_+(z) = \parens*{\gamma - v_-} F'_-(z) + \gamma F''_-(z).
    \end{align}
    Now, define
    \begin{align}
    \label{fluid17}
        \Phi(z) := F_+(z) F_-(z) + \parens*{\alpha^+ - \rho^-} F_+(z) - \parens*{\rho^+ + \alpha^-} F_-(z).
    \end{align}
    Then from \eqref{fluid4}, \eqref{fluid5}, and \eqref{fluid6},
    \begin{align}
    \label{fluid7}
        \Phi(z) + \Phi'(z) &= -\parens*{v_+ - \gamma} F'_+(z) + \gamma F''_+(z) = -\parens*{\gamma - v_-} F'_-(z) - \gamma F''_-(z).
    \end{align}
    To obtain the speed of the traveling wave, $\gamma$, note that by integrating the two sides of the last equality in \eqref{fluid7}, and using $\lim_{z \to -\infty} F_{\pm}(z) = \lim_{z \to -\infty} F_{\pm}'(z)=0$,
    \begin{align}
    \label{fluid9}
        \parens*{v_+ - \gamma} F_+(z) - \gamma F'_+(z) = \parens*{\gamma - v_-} F_-(z) + \gamma F'_-(z).
    \end{align}
    Taking the limit of both sides of \eqref{fluid9} as $z \to \infty$ and using $\lim_{z \to \infty} F_{\pm}'(z)=0$, we have
    \begin{align}
    \label{fluid103}
        \parens*{v_+ - \gamma} \rho^+ = \parens*{\gamma - v_-} \rho^-,
    \end{align}
    or
    \begin{align}
        \label{speedTW}
        \gamma = v_+ \rho^+ + v_- \rho^-,
    \end{align}
    as desired. The last assertion of Theorem \ref{travelwave}\ref{a0} now follows upon ``solving'' $\rho^+ = \rho^- = \dfrac{1}{2}$ and using the form of $\rho^+$ and $\rho^-$ in terms of $\alpha^+$ and $\alpha^-$.

    This completes the proof of Theorem \ref{travelwave}\ref{a0}.
\end{proof}

\subsection{A Coupled ODE Formulation for Exponential Jumps and \texorpdfstring{$v_-=0$}{v- = 0}}
To prove Theorem \ref{travelwave}\ref{b0}, we will show in the following result that when $Z\sim \expo(1)$ and $v_- = 0$, the traveling wave equations reduce to a system of first-order ODEs in two dimensions. 
\begin{lemma}
\label{twreversed}
Suppose that $Z\sim \expo(1)$ and $v_-=0$. Furthermore, suppose that $H = \parens*{H_+, H_-}$ is a traveling wave with mass partition $(\rho^+,\rho^-)$ and speed $\gamma$. For each $t\in\real$, let $G_\pm(t) := H_\pm(-t)$. Then $G_\pm(t)$ solves the following system of first-order ODEs:
    \begin{align}
    \label{sysFluid1}
        {\setlength{\nulldelimiterspace}{0pt}%
        \left\{
        \begin{aligned}
            &G'_+ = -\dfrac{\rho^-}{\gamma} G_+ G_- - \dfrac{v_+ - \gamma + \alpha^+ - \rho^-}{\gamma} G_+ + \rho^-  \dfrac{(\rho^+ +\alpha^- + \gamma)}{\gamma \rho^+}G_- \\
            &G'_- = \dfrac{\rho^+}{\gamma} G_+ G_- - \dfrac{\rho^+ + \alpha^-}{\gamma} G_- + \rho^+  \dfrac{(\alpha^+ - \rho^-)}{\gamma \rho^-}G_+
        \end{aligned}
        \right.}.
    \end{align}
     Conversely, suppose that $G=\parens*{G_+, G_-}\!: \real\to[0, 1]^2$ is a coordinate-wise non-increasing solution to \eqref{sysFluid1} such that
    \begin{align*}
        \lim_{t\to-\infty} G_\pm(t) = 1& & \text{and} & & \lim_{t\to\infty} G_\pm(t) = 0,
    \end{align*}
    and suppose that the probability measures with distribution functions $H_\pm(t) = G_\pm(-t)$ have finite first moments. Then with $H := (H_+, H_-)$, where, for each $t\in\real$, $H_\pm(t) := G_\pm(-t)$, $H$ determines a traveling wave with mass partition $\parens*{\rho^+, \rho^-}$ and speed $\gamma$.
 \end{lemma}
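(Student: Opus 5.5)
The plan is to derive \eqref{sysFluid1} directly from the first-order identities already obtained in the proof of Theorem \ref{travelwave}\ref{a0}, specialised to $v_-=0$, and then to reverse each step for the converse.

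\emph{Forward direction.} The argument leading to \eqref{fluid3} for the $+$ equation also applies to the $-$ equation of \eqref{newfluid}. For the $-$ equation the left-hand side is $v_-\int_{-\infty}^z \e^{y-z}F_-'(y)\di y$, which vanishes identically when $v_-=0$. The same limiting argument as for $C^+$ (using $F_\pm,F_\pm'\to 0$ at $-\infty$) forces the integration constant to be zero. This gives the local first-order identity
\begin{align*}
    \gamma F_-'(z) = \parens*{\rho^+ + \alpha^-}F_-(z) - \parens*{\alpha^+ - \rho^-}F_+(z) - F_+(z)F_-(z).
\end{align*}
Next I would use \eqref{fluid9} with $v_-=0$, namely $(v_+-\gamma)F_+ - \gamma F_+' = \gamma F_- + \gamma F_-'$. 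Substituting the expression for $\gamma F_-'$ above gives
\begin{align*}
    \gamma F_+' = \parens*{v_+ - \gamma + \alpha^+ - \rho^-}F_+ - \parens*{\gamma + \rho^+ + \alpha^-}F_- + F_+F_-.
\end{align*}
I would then write $F_\pm = \rho^\pm H_\pm$ and $G_\pm(t) = H_\pm(-t)$, so that $G_\pm'(t) = -F_\pm'(-t)/\rho^\pm$. Dividing by $-\gamma\rho^\pm$ yields exactly \eqref{sysFluid1}; for instance, the quadratic term in the first equation becomes $-\rho^+\rho^- G_+G_-/(\gamma\rho^+) = -\rho^- G_+G_-/\gamma$. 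One subtlety is that this division needs $\rho^\pm>0$. This holds by \eqref{rhoplus}, since $\alpha^->0$ gives $0<\rho^+<1$.

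\emph{Converse direction.} Let $G$ be a solution of \eqref{sysFluid1} with the stated properties, and set $F_\pm(z) := \rho^\pm G_\pm(-z)$. Since the right-hand side of \eqref{sysFluid1} is a polynomial, $G$ is $\CCC^\infty$, so $H$ is $\CCC^2$. Monotonicity and the limits at $\pm\infty$ are assumed, and these give conditions 1 and 2 of Definition \ref{traveldefn}. It remains to verify condition 3, and I would do this by running the computations backwards:
\begin{enumerate}[label = (\roman*)]
    \item The two first-order identities above hold by construction. Differentiating the $F_-$ identity recovers the $-$ equation of \eqref{newfluid} with $v_-=0$.
    \item Adding the two identities recovers \eqref{fluid9}. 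Let $u(z) := v_+\int_{-\infty}^z \e^{y-z}F_+'(y)\di y$ and let $R(z)$ denote the right-hand side of \eqref{fluid3} with $C^+=0$. Both $u$ and $R$ tend to $0$ at $-\infty$. The function $u$ satisfies $u' + u = v_+F_+'$, and I will check that $R$ satisfies the same linear ODE. Their difference then solves $w'+w=0$ and vanishes at $-\infty$, so $w\equiv 0$. This gives \eqref{fluid3}, and differentiating it gives the $+$ equation \eqref{fluid2}, that is, \eqref{newfluid}.
    \item Reversing the Fubini computation in Lemma \ref{integrodiff} yields \eqref{originalFluid} for $\CCC^1$ compactly supported test functions. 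I would extend this to all $f\in\HHH$ by truncation, using the finite first moments and $\expc(Z)=1$, as in Corollary \ref{initialization}.
\end{enumerate}

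\emph{Main obstacle.} I expect step (ii) to be the hardest part: verifying that $R$ satisfies $R'+R=v_+F_+'$, i.e.\ that the nonlocal $+$ equation is a consequence of the local system. Concretely, $R'+R = v_+F_+'$ is the difference of \eqref{fluid4} and \eqref{fluid5} rewritten, which is \eqref{fluid6} combined with \eqref{fluid7}. My plan is to differentiate \eqref{fluid9}, use the $F_-$ identity to eliminate $F_-''$, and check the resulting algebraic identity, using $\gamma=v_+\rho^+$ and the quadratic relation \eqref{pmrho} for the constants.
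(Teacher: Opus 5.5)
Your proposal is correct. The converse follows the paper's argument essentially verbatim, but the forward direction takes a genuinely shorter route.

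The paper's forward direction goes as follows:
\begin{itemize}
  \item It introduces $g$ and $\Phi$, and writes $F_+$, $F_-$ and $\Phi$ as exponential convolutions of $g$ with rates $v_+/\gamma-1$, $1-v_-/\gamma$ and $1$.
  \item It uses $v_-=0$ only to identify the last two convolutions, which gives $\gamma g=\gamma(\rho^--F_-)+\Phi$.
  \item It then needs $\rho^+(v_+-\gamma)=\gamma\rho^-$ from \eqref{fluid103} to cancel a leftover constant.
\end{itemize}

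You instead observe that for $v_-=0$ the $-$ equation of \eqref{newfluid} has no nonlocal term and is an exact derivative. You integrate it once; the constant vanishes because otherwise $\gamma F_-'$ would have a nonzero limit at $-\infty$, contradicting boundedness of $F_-$. You then get the $G_+$ equation from the homogeneous identity \eqref{fluid9}, so no relation among the constants is used. Your route shows more transparently where $v_-=0$ enters. The paper's convolution representation, by contrast, is valid for general $v_-$ and isolates exactly what breaks when $v_-\neq 0$, which is presumably why it was chosen.

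Two small remarks. First, $\rho^+<1$ comes from $\alpha^+>0$, while $\rho^+>0$ comes from $\alpha^->0$. Second, the step you flag as the main obstacle is immediate. By the $F_-$ identity, $R=\gamma(F_+'+F_-')=(v_+-\gamma)F_+-\gamma F_-$, which is precisely the function the paper identifies with $I_+$. Hence $R'+R=v_+F_+'$ follows in one line, with no need for $\gamma=v_+\rho^+$ or \eqref{pmrho}.
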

\begin{proof}
      We take up the argument for the forward implication first. To that end, define
    \begin{align}
    \label{fluid14}
        g(z) := -\parens*{\dfrac{v_+}{\gamma} - 1} F_+(z) + F'_+(z) + \rho^+ \parens*{\dfrac{v_+}{\gamma} - 1}.
    \end{align}
    Note that
      \begin{align*}
        \dfrac{\di}{\di z} \sqbr*{\exp\Biggparens{-\parens*{\dfrac{v_+}{\gamma}-1}z} F_+(z)} &= \exp\Biggparens{-\parens*{\dfrac{v_+}{\gamma}-1}z}\cdot \Biggparens{g(z) - \rho^+ \parens*{\dfrac{v_+}{\gamma}-1}},
    \end{align*}
    which implies
      \begin{align*}
        -\exp\Biggparens{-\parens*{\dfrac{v_+}{\gamma}-1}x} F_+(x) = -\rho^+ \exp\Biggparens{-\parens*{\dfrac{v_+}{\gamma}-1}x} + \int\limits_x^\infty \exp\Biggparens{-\parens*{\dfrac{v_+}{\gamma}-1}z} g(z) \di z,
    \end{align*}
    or, equivalently,
      \begin{align}
    \label{fluid11}
        F_+(x) &= \rho^+ - \int\limits_x^\infty \exp\Biggparens{-\parens*{\dfrac{v_+}{\gamma}-1}(z-x)} g(z) \di z = \rho^+ - \dfrac{\gamma}{v_+ - \gamma} \expc\sqbr*{g(x+E_+)},
    \end{align}
    where $E_+\sim \expo\parens*{\dfrac{v_+}{\gamma}-1}$. Similarly, using \eqref{fluid9} and \eqref{fluid103}, we can also express $g$ as
    \begin{align}
    \label{fluid15}
        g(z) = -\parens*{1-\dfrac{v_-}{\gamma}} F_-(z) - F'_-(z) + \rho^-\parens*{1-\dfrac{v_-}{\gamma}},
    \end{align}
    and by a similar argument, we have
      \begin{align*}
        -\dfrac{\di}{\di z} \sqbr*{\exp\Biggparens{\parens*{1-\dfrac{v_-}{\gamma}}z} F_-(z)} &= \exp\Biggparens{\parens*{1-\dfrac{v_-}{\gamma}}z}\cdot \sqbr*{g(z) - \rho^- \Biggparens{1-\dfrac{v_-}{\gamma}}},
    \end{align*}
    which gives
      \begin{align*}
        -\exp\Biggparens{\parens*{1-\dfrac{v_-}{\gamma}}x} F_-(x) = -\rho^- \exp\Biggparens{\parens*{1-\dfrac{v_-}{\gamma}}x} + \int\limits_{-\infty}^x \exp\Biggparens{\parens*{1-\dfrac{v_-}{\gamma}}z} g(z) \di z,
    \end{align*}
    so
      \begin{align}
   \label{fluid12}
        F_-(x) &= \rho^- - \int\limits_{-\infty}^x \exp\Biggparens{-\parens*{1-\dfrac{v_-}{\gamma}}(x-z)} g(z) \di z = \rho^- - \dfrac{\gamma}{\gamma-v_-} \expc\sqbr*{g(x-E_-)},
    \end{align}
    where $E_-\sim \expo\parens*{1-\dfrac{v_-}{\gamma}}$. Next, from \eqref{fluid7}, note that $\Phi(z) + \Phi'(z) = \gamma g'(z)$, which implies
    \begin{align*}
        \e^z\sqbr*{\Phi(z) + \Phi'(z)} = \gamma \e^z g'(z).
    \end{align*}
    Equivalently,
    \begin{align}
    \label{fluid10}
        \dfrac{\di}{\di z} \sqbr*{\e^z \Phi(z)} = \gamma \e^z g'(z),
    \end{align}
    which yields
      \begin{align}
    \label{fluid101}
        \e^x\cdot \Phi(x) &= \gamma \int\limits_{-\infty}^x \e^z g'(z) \di z = \gamma\cdot\e^z g(z) \Bigg|_{-\infty}^x - \gamma\int\limits_{-\infty}^x \e^z g(z) \di z = \gamma\cdot\e^x g(x) -\gamma \int\limits_{-\infty}^x \e^z g(z) \di z.
    \end{align}
    In turn, \eqref{fluid101} gives
      \begin{align}
    \label{fluid13}
        \Phi(x) &= \gamma g(x) - \gamma \int\limits_{-\infty}^x \e^{-(x-z)} g(z) \di z = \gamma g(x) - \gamma \expc\sqbr*{g(x-E_1)},
    \end{align}
    where $E_1 \sim \expo(1)$. Now, suppose that $v_- = 0$. Then $E_- \eqd E_1$, and from both \eqref{fluid12} and \eqref{fluid13},
    \begin{align*}
        \Phi(x) &= \gamma g(x) - \gamma \expc\sqbr*{g(x-E_1)} = \gamma g(x) - \gamma \parens*{\rho^- - F_-(x)},
    \end{align*}
    or equivalently,
    \begin{align}
    \label{fluid16}
        \gamma g(x) &= \gamma\parens*{\rho^- - F_-(x)} + \Phi(x) \nonumber \\
        &= \gamma \rho^- - \gamma F_-(x) + F_+(x) F_-(x) + \parens*{\alpha^+ - \rho^-}F_+(x) - \parens*{\rho^+ + \alpha^-} F_-(x) \nonumber \\
        &= F_+(x)F_-(x) + \parens*{\alpha^+-\rho^-}F_+(x) -\parens*{\rho^++\alpha^-+\gamma} F_-(x) + \gamma \rho^-,
    \end{align}
    where equality on the second line follows from the definition of $\Phi$ in \eqref{fluid17}. Also, by multiplying both sides of \eqref{fluid14} by $\gamma$, we have
    \begin{align*}
        \gamma g(x) = -\parens*{v_+ - \gamma}F_+(x) + \gamma F'_+(x) + \rho^+\parens*{v_+ - \gamma},
    \end{align*}
    and this, along with \eqref{fluid16}, yields
      \begin{align}
    \label{fluid102}
        &-\parens*{v_+ - \gamma}F_+(x) + \gamma F'_+(x) + \rho^+\parens*{v_+ - \gamma} \nonumber \\
        &\qquad = F_+(x) F_-(x) + \parens*{\alpha^+ - \rho^-}F_+(x) - \parens*{\rho^+ + \alpha^- + \gamma} F_-(x) + \gamma\rho^-.
    \end{align}
    Next, by rearranging the terms of \eqref{fluid102}, we get
      \begin{align*}
        F'_+(x) = \dfrac{1}{\gamma} F_+(x) F_-(x) + \dfrac{v_+ - \gamma + \alpha^+ - \rho^-}{\gamma} F_+(x) - \dfrac{\rho^++\alpha^-+\gamma}{\gamma} F_-(x) + \dfrac{\gamma \rho^- - \rho^+ \parens*{v_+ - \gamma}}{\gamma}.
    \end{align*}
    Now, from \eqref{fluid103} and the assumption that $v_- = 0$, we have  $\rho^+\parens*{v_+ - \gamma} = \gamma \rho^-$; thus,
    \begin{align*}
        F'_+(x) = \dfrac{1}{\gamma} F_+(x) F_-(x) + \dfrac{v_+ - \gamma + \alpha^+ - \rho^-}{\gamma} F_+(x) - \dfrac{\rho^++\alpha^-+\gamma}{\gamma} F_-(x).
    \end{align*}
    Similarly, by multiplying both sides of \eqref{fluid15} by $\gamma$ and noting that $v_- = 0$ by assumption, we obtain
    \begin{align*}
        \gamma g(x) = -\gamma F_-(x) - \gamma F'_-(x) + \gamma \rho^-,
    \end{align*}
    and from \eqref{fluid16}, we also have that
      \begin{align*}
        -\gamma F_-(x) - \gamma F'_-(x) + \gamma \rho^- = F_+(x) F_-(x) + \parens*{\alpha^+ - \rho^-}F_+(x) - \parens*{\rho^+ + \alpha^- + \gamma} F_-(x) + \gamma\rho^-.
    \end{align*}
    Again, we can follow the same line of argument to arrive at
    \begin{align*}
        F'_-(x) = -\dfrac{1}{\gamma} F_+(x) F_-(x) + \dfrac{\rho^+ + \alpha^-}{\gamma} F_-(x) - \dfrac{\alpha^+ - \rho^-}{\gamma} F_+(x).
    \end{align*}
    Lastly, let $H_\pm := \dfrac{F_\pm}{\rho\displaystyle^\pm}$. This guarantees that $H_+$ and $H_-$ are valid distribution functions whose dynamics are described by the system of equations
    \begin{align}
    \label{sysFluid2}
        {\setlength{\nulldelimiterspace}{0pt}%
        \left\{
        \begin{aligned}
            &H'_+ =  \dfrac{\rho^-}{\gamma} H_+ H_- + \dfrac{v_+ - \gamma + \alpha^+ - \rho^-}{\gamma} H_+ - \dfrac{\rho^-\parens*{\rho^+ + \alpha^- + \gamma}}{\gamma \rho^+} H_- \\
            &H'_- =  -\dfrac{\rho^+}{\gamma} H_+ H_- + \dfrac{\rho^+ + \alpha^-}{\gamma} H_- - \dfrac{\rho^+\parens*{\alpha^+ - \rho^-}}{\gamma \rho^-} H_+
        \end{aligned}
        \right.},
    \end{align}
    and with $G_\pm(t) := H_\pm(-t)$, \eqref{sysFluid1} follows. That gives us the forward implication.

    For the backward implication, suppose that $G = \parens*{G_+, G_-}$ is a coordinate-wise non-increasing solution of \eqref{sysFluid1} satisfying
    \begin{align*}
        \lim_{t\to-\infty} G_\pm(t) = 1 & & \text{and}& & \lim_{t\to\infty} G_\pm(t) = 0.
    \end{align*}
    For each $t\in\real$, let $H_\pm(t) := G_\pm(-t)$, and $F_\pm(t) := \rho^\pm H_\pm(t)$. Since $G_\pm$ are non-increasing and satisfy the preceding boundary conditions, $H_\pm$ are non-decreasing, with
    \begin{align*}
        \lim_{t\to-\infty} H_\pm(t) = 0  & & \text{and} & & \lim_{t\to\infty} H_\pm(t) = 1;
    \end{align*}
    that is, $H_+$ and $H_-$ are proper distribution functions. Furthermore, since the vector field in \eqref{sysFluid1} is smooth, $H_+$ and $H_-$ are $\CCC^2$. By reversing time in \eqref{sysFluid1}, then, $H_+$ and $H_-$ satisfy \eqref{sysFluid2}, which implies that
    \begin{align}
    \label{sysFluid2a}
        \gamma F'_+(t) = F_+(t) F_-(t) + \parens*{v_+ - \gamma + \alpha^+ - \rho^-} F_+(t) - \parens*{\rho^+ + \alpha^- + \gamma} F_-(t),
    \end{align}
    and that
    \begin{align}
    \label{sysFluid2b}
        \gamma F'_-(t) = -F_+(t) F_-(t) + \parens*{\rho^+ + \alpha^-} F_-(t) - \parens*{\alpha^+ - \rho^-} F_+(t).
    \end{align}
    Now, differentiating both sides of \eqref{sysFluid2b} gives
    \begin{align*}
        \beta^-(t) F'_-(t) - \beta^+(t) F'_+(t) - \gamma F''_-(t) = 0,
    \end{align*}
    which is one of the two equations in \eqref{newfluid}, as $v_- = 0$. It remains to show that \eqref{sysFluid2} implies the other equation in \eqref{newfluid}. To that end, define
    \begin{align*}
        I_+(t) := v_+ \int\limits_{-\infty}^t \e^{s-t} F'_+(s) \di s,
    \end{align*}
    and observe that $\lim_{t\to-\infty} I_+(t) = 0$ and that $I_+'(t) + I_+(t) = v_+ F'_+(t)$. Next, adding \eqref{sysFluid2a} and \eqref{sysFluid2b} gives
    \begin{align*}
        \gamma\bigparens{F'_+(t) + F'_-(t)} = (v_+ - \gamma) F_+(t) - \gamma F_-(t).
    \end{align*}
    In addition, we also have $\lim_{t\to-\infty} \bigsqbr{(v_+ -\gamma) F_+(t) - \gamma F_-(t)} = 0$. From these two observations, we obtain
    \begin{align*}
        \dfrac{\di}{\di t} \bigsqbr{(v_+ -\gamma) F_+(t) - \gamma F_-(t)} + (v_+ -\gamma) F_+(t) - \gamma F_-(t) = v_+ F'_+(t).
    \end{align*}
    Thus, by the uniqueness of solution to a linear differential equation, it must be the case that
    \begin{align*}
        I_+(t) = (v_+ - \gamma) F_+(t) - \gamma F_-(t) = \gamma\bigparens{F'_+(t) + F'_-(t)}.
    \end{align*}
    Finally, by differentiating \eqref{sysFluid2a} with respect to $t$ and simplifying, we obtain
    \begin{align*}
        \bigparens{v_+ + \beta^+(t)} F'_+(t) - \beta^-(t) F'_-(t) - \gamma F''_+(t) = \gamma\bigparens{F'_+(t) + F'_-(t)} = I_+(t) = v_+\int\limits_{-\infty}^t \e^{s-t} F'_+(s) \di s,
    \end{align*}
    which is the other equation in \eqref{newfluid}. In other words, $\rho_\pm(t) := F'_\pm(t)$ satisfy \eqref{newfluid}.

    To wrap up this proof, note that \eqref{newfluid} gives \eqref{newerFluid4} pointwise for the translated densities $\rho_\pm(t, x) = \rho_\pm(x-\gamma t)$. Integrating \eqref{newerFluid4} in time and space against $f(\cdot, +)$ and $f(\cdot, -)$, and reversing the Fubini calculation in Lemma \ref{integrodiff}, yields \eqref{originalFluid} for every $f\in\HHH$. All terms are integrable because $f$ is $1$-Lipschitz, the profiles have finite first moments, the switching rates are bounded, and from Assumption \ref{assumption0}, $\expc(Z) = 1 < \infty$.
\end{proof}

\subsection{Equilibria and Linearized Stability Analysis}
  To prove Theorem \ref{travelwave}\ref{b0}, we first identify (precisely) $(0, 0)$ and $(1, 1)$ as the equilibria of \eqref{sysFluid1} as well as analyze them. We shall also construct a \textbf{heteroclinic orbit}  from $(1, 1)$ to $(0, 0)$,   whose coordinates are non-increasing. In order to perform those tasks, we need information on the local stability around the equilibrium points. The next lemma analyzes the fixed points and their stability properties for the associated \emph{linearized versions} of the flow, given by \eqref{sysFluid1}:
\begin{lemma}
\label{linearode}
Suppose $Z\sim \expo(1)$ and $v_+>v_- = 0$. The following statements about the system of first-order ODEs \eqref{sysFluid1} are true:
    \begin{enumerate}[label = (\alph*)]
        \item \label{55a} The system of first-order ODEs \eqref{sysFluid1} has exactly two fixed points (or equilibria) given by $(0, 0)$ and $(1, 1)$.

        \item \label{55b} The linearized version of \eqref{sysFluid1} at $(0, 0)$,
        \begin{align}
        \label{linearSysFluid1}
            {\setlength{\nulldelimiterspace}{0pt}%
            \left\{
            \begin{aligned}
                &\dot{x} = -\dfrac{v^+ -\gamma + \alpha^+ - \rho^-}{\gamma} x + \dfrac{\rho^- \parens*{\rho\displaystyle^+ + \alpha\displaystyle^- + \gamma}}{\gamma \rho\displaystyle^+} y \\
                &\dot{y} = \dfrac{\rho\displaystyle^+ \parens*{\alpha\displaystyle^+ - \rho\displaystyle^-}}{\gamma \rho\displaystyle^-} x - \dfrac{\rho\displaystyle^+ + \alpha\displaystyle^-}{\gamma} y
            \end{aligned}
            \right.},
        \end{align}
        has a stable equilibrium at $(0, 0)$, in the sense that both eigenvalues of the associated Jacobian at $(0, 0)$,
          \begin{align}
          \label{originalJ00}
            J(0, 0) = \begin{bmatrix}
                -\dfrac{v_+ -\gamma + \alpha^+ - \rho^-}{\gamma} & \dfrac{\rho\displaystyle^-\parens*{\rho^+ + \alpha^- + \gamma}}{\gamma \rho\displaystyle^+} \\
                \dfrac{\rho^+\parens*{\alpha^+-\rho^-}}{\gamma \rho\displaystyle^-}& -\dfrac{\rho^+ + \alpha^-}{\gamma}
            \end{bmatrix}
        \end{align}
        are real and negative.

         In addition, let $\lambda_1 < \lambda_2 < 0$ denote the two eigenvalues of $J(0, 0)$. Then an eigenvector corresponding to $\lambda_1$ has coordinates of opposite signs, while an eigenvector corresponding to $\lambda_2$ has coordinates of the same sign. (In each case, the corresponding eigenvector has non-zero coordinates.)

        \item \label{55c} The linearized version of \eqref{sysFluid1} at $(1, 1)$,
        \begin{align}
        \label{linearSysFluid2}
            {\setlength{\nulldelimiterspace}{0pt}%
            \left\{
            \begin{aligned}
                &\dot{x} = -\dfrac{v^+ -\gamma + \alpha^+}{\gamma} x + \dfrac{\rho^- \parens*{\alpha\displaystyle^- + \gamma}}{\gamma \rho\displaystyle^+} y \\
                &\dot{y} = \dfrac{\rho\displaystyle^+ \alpha\displaystyle^+}{\gamma \rho\displaystyle^-} x - \dfrac{\alpha\displaystyle^-}{\gamma} y
            \end{aligned}
            \right.},
        \end{align}
        has a one-dimensional unstable manifold, in the sense that the associated Jacobian $J(1, 1)$ has two real eigenvalues $\lambda_u$ and $\lambda_s$ such that $\lambda_s < 0 < \lambda_u$. Moreover, the eigenvector $\vv_u = \begin{bmatrix} x_u & & y_u\end{bmatrix}\tran$ corresponding to $\lambda_u$ satisfies
        \begin{align*}
            \dfrac{v^+ - \gamma + \alpha\displaystyle^+}{v_+ - \gamma + \alpha\displaystyle^+ - \rho\displaystyle^-} < \dfrac{y_u}{x_u} < \dfrac{\alpha\displaystyle^+}{\alpha\displaystyle^+ - \rho\displaystyle^-}.
        \end{align*}
    \end{enumerate}
\end{lemma}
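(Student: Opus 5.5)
The plan is to reduce everything to explicit algebra with three identities that hold under $v_-=0$. From \eqref{speedTW}, $\gamma=v_+\rho^+$, so $v_+-\gamma=v_+\rho^-$ and $\rho^+(v_+-\gamma)=\gamma\rho^-$. From \eqref{pmrho}, $\rho^+\rho^-=\alpha^+\rho^+-\alpha^-\rho^-$, which is equivalent to $\rho^+(\alpha^+-\rho^-)=\alpha^-\rho^-$. Finally, since $\alpha^+\ge1>\rho^-$, we have $\alpha^+-\rho^->0$.

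\emph{Part (a).} I would take $\rho^+$ times the first equation of \eqref{sysFluid1} plus $\rho^-$ times the second. This removes the $G_+G_-$ term. After using $\rho^+(v_+-\gamma)=\gamma\rho^-$, the combination reduces to $\gamma\rho^-(G_--G_+)$. Hence any equilibrium has $G_+=G_-=u$. Substituting into the second equation and using $\rho^+(\alpha^+-\rho^-)/\rho^-=\alpha^-$, the equation becomes $\rho^+u^2-\rho^+u=0$. So $u\in\{0,1\}$.

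\emph{Part (b).} Differentiating \eqref{sysFluid1} at $(0,0)$ gives \eqref{originalJ00}. Write $J(0,0)=(a_{ij})$. Both diagonal entries are negative, because $v_+-\gamma+\alpha^+-\rho^-=v_+\rho^-+\alpha^+-\rho^->0$. Both off-diagonal entries are positive, because $\alpha^+>\rho^-$. Therefore the discriminant $(a_{11}-a_{22})^2+4a_{12}a_{21}$ is positive and the eigenvalues are real and distinct.

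For the determinant, I compute
\[
\gamma^2\bigparens{a_{11}a_{22}-a_{12}a_{21}}=v_+\rho^-(\rho^++\alpha^-)-\gamma(\alpha^+-\rho^-).
\]
With $\gamma=v_+\rho^+$ and the identity above, this equals $v_+\rho^+\rho^->0$. Together with trace $<0$, both eigenvalues are negative.

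For the eigenvectors I take $(a_{12},\,\lambda-a_{11})$. The characteristic polynomial evaluated at $a_{11}$ equals $-a_{12}a_{21}<0$, so $a_{11}$ lies strictly between the two roots. Hence $\lambda_2-a_{11}>0$, giving an eigenvector with coordinates of the same sign. Likewise $\lambda_1-a_{11}<0$, giving coordinates of opposite signs. Neither coordinate vanishes, since $a_{12}\ne0$ and $\lambda_i\ne a_{11}$.

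\emph{Part (c).} At $(1,1)$ the Jacobian has $a_{11}=-(v_+-\gamma+\alpha^+)/\gamma$ and the remaining entries shown in \eqref{linearSysFluid2}; the off-diagonal entries are again positive. The same computation gives
\[
\gamma^2\det J(1,1)=v_+\bigparens{\rho^-\alpha^--\rho^+\alpha^+}=-v_+\rho^+\rho^-<0.
\]
So the eigenvalues are real with $\lambda_s<0<\lambda_u$.

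For the eigenvector bound, the slope $r=y_u/x_u$ of an invariant line satisfies
\[
q(r):=a_{12}r^2+(a_{11}-a_{22})r-a_{21}=0 .
\]
The product of the roots is $-a_{21}/a_{12}<0$, so $q$ has exactly one positive root. That root belongs to $\lambda_u$, because $r=(\lambda-a_{11})/a_{12}$ and $\lambda_u>0>a_{11}$. It then suffices to check
\[
q(r_{\mathrm{low}})<0<q(r_{\mathrm{high}}),\qquad r_{\mathrm{low}}=\frac{v_+-\gamma+\alpha^+}{v_+-\gamma+\alpha^+-\rho^-},\quad r_{\mathrm{high}}=\frac{\alpha^+}{\alpha^+-\rho^-}.
\]
Both endpoints are positive and exceed $1$.

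\emph{Main obstacle.} I expect the hardest step to be these two sign checks. My plan is to substitute $v_+-\gamma=v_+\rho^-$ and $\alpha^-=\rho^+(\alpha^+-\rho^-)/\rho^-$, so that everything is expressed in $v_+,\rho^\pm,\alpha^+$. I would then clear the positive denominators and hope each expression factors as a product of manifestly signed terms, such as $\rho^-$, $v_+\rho^+\rho^-$ and $\alpha^+-\rho^-$.

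If direct factoring fails, my fallback is an interpretation of the endpoints. I would try to recognize $r_{\mathrm{low}}$ and $r_{\mathrm{high}}$ as the ratios $a_{21}/(\lambda-a_{22})$ or $(\lambda-a_{11})/a_{12}$ evaluated at $\lambda=0$ for perturbed matrices. Then monotonicity of $r(\lambda)$ in $\lambda$ would place $r(\lambda_u)$ between them.
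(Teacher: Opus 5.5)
Your proposal is correct. For (a), (b) and the saddle property in (c) it follows the paper's proof almost step for step:
\begin{itemize}
\item the same weighted sum $\rho^+\cdot(\text{first})+\rho^-\cdot(\text{second})$;
\item the same reduction $\gamma^2\det J(0,0)=v_+\rho^+\rho^-$;
\item the same evaluation of the characteristic polynomial at $a_{11}$ to separate the eigenvalues and read off eigenvector signs;
\item the same computation $\gamma^2\det J(1,1)=-v_+\rho^+\rho^-$.
\end{itemize}
There are two minor differences. You get $\mathrm{tr}\,J(0,0)<0$ directly from the signs of the diagonal entries, which is simpler than the paper's comparison through the explicit formulas for $\rho^\pm$. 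You also justify $\alpha^+>\rho^-$ via the standing assumption $\alpha^+\ge 1$. The paper instead reads this off the identity $\rho^+(\alpha^+-\rho^-)=\alpha^-\rho^->0$, which you already have and which needs no extra hypothesis.

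The real difference is the eigenvector slope bound in (c). The step you flag as the main obstacle is immediate once you use $\alpha^-\rho^-=\rho^+(\alpha^+-\rho^-)$ and $\rho^-(\alpha^-+\gamma)=\rho^+(v_+-\gamma+\alpha^+-\rho^-)$. The latter follows from $\gamma\rho^-=\rho^+(v_+-\gamma)$. With these, both endpoints are expressed through the entries of $J(1,1)$ itself:
\[
r_{\mathrm{high}}=\frac{a_{21}}{-a_{22}},\qquad r_{\mathrm{low}}=\frac{-a_{11}}{a_{12}}.
\]

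The paper's argument is exactly your fallback, and no perturbed matrices are needed. The slope equals both $a_{21}/(\lambda_u-a_{22})$ and $(\lambda_u-a_{11})/a_{12}$. The first is decreasing and the second increasing in $\lambda_u$, so $\lambda_u>0$ gives both strict bounds at once.

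Your primary route through $q$ also goes through. Write $q(r)=r\,(a_{11}+a_{12}r)-(a_{21}+a_{22}r)$. Then $q(r_{\mathrm{low}})=-(a_{21}+a_{22}r_{\mathrm{low}})$ and $q(r_{\mathrm{high}})=r_{\mathrm{high}}(a_{11}+a_{12}r_{\mathrm{high}})$. Each has the required sign if and only if $r_{\mathrm{low}}<r_{\mathrm{high}}$. Multiplying by $a_{12}(-a_{22})>0$, this is just $\det J(1,1)<0$, which you have already proved. Your hoped-for factoring also succeeds directly: for example,
\[
q(r_{\mathrm{low}})=-\frac{v_+\rho^+\rho^-}{\gamma\,(v_+\rho^-+\alpha^+-\rho^-)}.
\]

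Both arguments ultimately rest on the saddle property: the paper's through $\lambda_u>0$, yours through $r_{\mathrm{low}}<r_{\mathrm{high}}$. The paper's version is a one-line monotonicity check. Yours additionally requires identifying which root of $q$ belongs to $\lambda_u$, which you did correctly.
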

\begin{proof}
    Let us proceed:
     \begin{enumerate}[label = (\alph*)]
        \item   Let $(g_+, g_-)$ be a fixed point of \eqref{sysFluid1}. Then we can multiply the first equation of \eqref{sysFluid1} by $\rho^+$, the second one by $\rho^-$, and add the two resulting equations to obtain
        \begin{align}
        \label{fixPts1}
            -\dfrac{\rho^+(v_+ - \gamma)}{\gamma} g_+ + \rho^- g_- = 0.
        \end{align}
          Also, since $v_- = 0$, \eqref{fluid103} becomes $\rho^+ (v_+ - \gamma) = \gamma \rho^-$, and plugging this back into \eqref{fixPts1} gives
        \begin{align*}
            \rho^-(g_- - g_+) = 0.
        \end{align*}
          Since $\rho^- > 0$, that implies $g_- = g_+$. Denote their common value by $g$. Substituting this back into the second equation of \eqref{sysFluid1} gives
        \begin{align}
        \label{fixPts2}
            \dfrac{\rho^+}{\gamma} g^2 - \dfrac{\rho^+ + \alpha^-}{\gamma} g + \dfrac{\rho^+ \parens*{\alpha^+ - \rho^-}}{\gamma \rho^-} g = 0.
        \end{align}
          Recall that from \eqref{pmrho}, $\rho^+ \parens*{\alpha^+ - \rho^-} = \alpha^- \rho^-$, and plugging this back into \eqref{fixPts2} yields
        \begin{align}
        \label{fixPts3}
            \dfrac{\rho^+}{\gamma} g^2 - \dfrac{\rho^+}{\gamma} g = 0 \equi \dfrac{\rho^+}{\gamma} g(g-1) = 0.
        \end{align}
          Now, since $\rho^+ > 0$ and $\gamma > 0$, \eqref{fixPts3} implies $g = 0$ or $g = 1$, so every fixed point of \eqref{sysFluid1} is either $(0, 0)$ or $(1, 1)$. Conversely, direct substitution into \eqref{sysFluid1} shows that both $(0, 0)$ and $(1, 1)$ are fixed points. We conclude that $(0, 0)$ and $(1, 1)$ are precisely the two fixed points of \eqref{sysFluid1}.

        \item Note that
          \begin{align*}
            \trace\bigparens{J(0,0)} = -\dfrac{v_+-\gamma + \alpha^+ + \alpha^- + \rho^+ - \rho^-}{\gamma}.
        \end{align*}
        From the explicit formulas for $\rho^+$ and $\rho^-$ [see \eqref{rhoplus} and \eqref{rhominus}], we have
        \begin{align*}
            \alpha^+ + \alpha^- + \rho^+ > \dfrac{1+\alpha^++\alpha^-}{2} > \rho^-,
        \end{align*}
        since $\alpha^+ > 0$ and $\alpha^- > 0$, by construction. In addition, $v_+ - \gamma = v_+ \rho^- > 0$, so $\trace\bigparens{J(0,0)} < 0$. Moreover, since $\gamma = v_+ \rho^+$, $v_+ - \gamma = v_+ \rho^-$, and $\alpha^+ \rho^+ - \alpha^- \rho^- = \rho^+ \rho^-$, the determinant $\abs*{J(0, 0)}$  takes the form
        \begin{align*}
            \gamma^2 \abs*{J(0, 0)} &= \parens*{v_+ - \gamma + \alpha^+ - \rho^-}\parens*{\rho^+ + \alpha^-} - \parens*{\rho^+ + \alpha^- + \gamma} \parens*{\alpha^+ - \rho^-} \\
            &= \parens*{\rho^+ + \alpha^-} \parens*{v_+ - \gamma + \alpha^+ - \rho^- - \alpha^+ + \rho^-} - \gamma\parens*{\alpha^+ - \rho^-} \\
            &= \parens*{\rho^+ + \alpha^-} \parens*{v_+ - \gamma} - \gamma\parens*{\alpha^+ - \rho^-} \\
            &= \parens*{\rho^+ + \alpha^-} v_+ \rho^- - v_+ \rho^+\parens*{\alpha^+ - \rho^-} \\
            &= v_+ \Bigparens{\rho^- \parens*{\rho^+ + \alpha^-} - \rho^+ \parens*{\alpha^+ - \rho^-}} \\
            &= v_+ \Bigparens{\rho^+ \rho^- + \parens*{\rho^+ \rho^- - \alpha^+ \rho^+ + \alpha^- \rho^-}} = v_+ \rho^+ \rho^- > 0.
        \end{align*}
        This gives us that both eigenvalues of $J(0, 0)$ have negative real parts.

        Next, to show that both eigenvalues of $J(0, 0)$ are real, we need to show that
        \begin{align*}
            \trace^2\bigparens{J(0, 0)} - 4 \abs*{J(0, 0)} > 0.
        \end{align*}
        To do so, observe that from the original formation of $J(0, 0)$ in \eqref{originalJ00},
          \begin{align}
            \label{realEigen}
            \gamma^2 \sqbr*{\trace^2\bigparens{J(0, 0)} - 4 \abs*{J(0, 0)}} = \parens*{v_+ - \gamma + \alpha^+ - \alpha^- - \rho^+ - \rho^-}^2 + 4\parens*{\alpha^+ - \rho^-}\parens*{\rho^+ + \alpha^- + \gamma},
        \end{align}
        and from \eqref{pmrho}, $\alpha^+ \rho^+ - \alpha^- \rho^- = \rho^+ \rho^-$, or equivalently,
        \begin{align*}
            \rho^+ \parens*{\alpha^+ - \rho^-} = \alpha^- \rho^- \equi
            \alpha^+ - \rho^- = \dfrac{\alpha^- \rho^-}{\rho\displaystyle^+} > 0.
        \end{align*}
         Thus, the right-hand side of \eqref{realEigen} is positive, so our claim  naturally follows.

        To prove the last assertion, note that
        since $\alpha^+ - \rho^- = \dfrac{\alpha^- \rho^-}{\rho^+} > 0$, both off-diagonal entries of $J(0, 0)$ are strictly positive. Now let $\vv = \begin{bmatrix} x & & y \end{bmatrix}\tran$ be an eigenvector corresponding to some eigenvalue $\lambda$ of $J(0, 0)$. Then the first row of the eigenvalue equation gives
          \begin{align}
        \label{eigSlopeJ00}
            \parens*{-\dfrac{v_+ - \gamma + \alpha^+ - \rho^-}{\gamma} - \lambda} x + \dfrac{\rho^- \parens*{\rho^+ + \alpha^- + \gamma}}{\gamma \rho^+} y = 0.
        \end{align}
        Since both off-diagonal entries of $J(0, 0)$ are strictly positive, neither coordinate of an eigenvector can vanish; hence, we may rearrange \eqref{eigSlopeJ00} to obtain
        \begin{align}
        \label{eigSlopeJ00b}
            \dfrac{y}{x} = \dfrac{\dfrac{v_+ - \gamma + \alpha^+ - \rho^-}{\gamma} + \lambda}{\dfrac{\rho^- \parens*{\rho^+ + \alpha^- + \gamma}}{\gamma \rho^+}}.
        \end{align}
        Since the denominator in \eqref{eigSlopeJ00b} is positive, the sign of $\dfrac{y}{x}$ is exactly the sign of
        \begin{align*}
            \dfrac{v_+ - \gamma + \alpha^+ - \rho^-}{\gamma} + \lambda.
        \end{align*}
        Next, let
          \begin{align*}
            a := \dfrac{v_+ - \gamma + \alpha^+ - \rho^-}{\gamma}, & &
            b := \dfrac{\rho^- \parens*{\rho^+ + \alpha^- + \gamma}}{\gamma \rho^+}, & & c := \dfrac{\rho^+ \parens*{\alpha^+ - \rho^-}}{\gamma \rho^-}, & & \text{and}& & d := \dfrac{\rho^+ + \alpha^-}{\gamma},
        \end{align*}
        so that
        \begin{align*}
            J(0, 0) = \begin{bmatrix} -a & & b \\ c & & -d \end{bmatrix},
        \end{align*}
        and note that $a$, $b$, $c$, and $d$ are four positive numbers. Accordingly, the characteristic polynomial is
        \begin{align*}
            p(\lambda) = \abs*{J(0, 0) - \lambda I} = (\lambda + a)(\lambda + d) - bc,
        \end{align*}
        and therefore $p(-a) = -bc < 0$. In addition, since $p$ is a quadratic polynomial with positive leading coefficient and two real roots $\lambda_1$ and $\lambda_2$, where, say, $\lambda_1 < \lambda_2$, it follows that $\lambda_1 < -a < \lambda_2$, or equivalently,
        \begin{align*}
            a + \lambda_1 < 0 < a + \lambda_2.
        \end{align*}
        Returning to \eqref{eigSlopeJ00b}, we conclude that for $\lambda = \lambda_1$, $\dfrac{y}{x} < 0$, while for $\lambda = \lambda_2$, $\dfrac{y}{x} > 0$. Thus, the eigenvector corresponding to the smaller eigenvalue $\lambda_1$ has coordinates of opposite signs, whereas the eigenvector corresponding to the larger eigenvalue $\lambda_2$ has coordinates of the same sign.

        \item Lastly, observe that
        \begin{align}
        \label{detJ11}
            \abs*{J(1, 1)} &= \dfrac{1}{\gamma\displaystyle^2} \sqbr*{\parens*{v_+ - \gamma + \alpha^+} \cdot \alpha^- - \parens*{\alpha^- + \gamma} \cdot \alpha^+} \nonumber \\
            &= \dfrac{1}{\gamma\displaystyle^2} \sqbr*{v_+ \alpha^- - \gamma \parens*{\alpha^- + \alpha^+}} \nonumber \\
            &= \dfrac{1}{\gamma\displaystyle^2} \sqbr*{v_+ \alpha^- - v_+ \rho^+ \parens*{\alpha^- + \alpha^+}} = \dfrac{v_+}{\gamma\displaystyle^2} \sqbr*{\alpha^- - \rho^+ \parens*{\alpha^- + \alpha^+}}.
        \end{align}
        Now, from \eqref{fluid8}, recall that $\rho^+$ satisfies $\parens*{\rho^+}^2 + \parens*{\alpha^+ + \alpha^- - 1}\rho^+ - \alpha^- = 0$. Equivalently, we can rearrange the terms to obtain
        \begin{align*}
            \alpha^- - \rho^+ \parens*{\alpha^- + \alpha^+} = \parens*{\rho^+}^2 - \rho^+,
        \end{align*}
        and since $\rho^+ \in (0, 1)$, $\parens*{\rho^+}^2 - \rho^+ < 0$. Thus, $\alpha^- - \rho^+ \parens*{\alpha^- + \alpha^+} < 0$ as well, and so from \eqref{detJ11}, $\abs*{J(1, 1)} < 0$. This implies that both eigenvalues of $J(1, 1)$ are real; furthermore, one eigenvalue is negative and the other one is positive.

        Next, to prove the bounds on the ``slope'' of the eigenvector, observe that from the characteristic equation of $J(1, 1)$,
        \begin{align}
        \label{slopeEigenv}
            \dfrac{y_u}{x_u} = \dfrac{\dfrac{\rho\displaystyle^+ \alpha\displaystyle^+}{\gamma \rho\displaystyle^-}}{\dfrac{\alpha\displaystyle^-}{\gamma} + \lambda_u} = \dfrac{\dfrac{v_+ - \gamma + \alpha\displaystyle^+}{\gamma} + \lambda_u}{\dfrac{\rho\displaystyle^-\parens*{\alpha\displaystyle^- + \gamma}}{\gamma \rho\displaystyle^+}}.
        \end{align}
        We can exploit each equality above to prove the bound on the eigenvector ``slope.'' First, since $\lambda_u > 0$, from \eqref{slopeEigenv}, and since $\alpha^+ \rho^+ - \alpha^- \rho^- = \rho^+ \rho^-$,
        \begin{align*}
            \dfrac{y_u}{x_u} = \dfrac{\dfrac{\rho\displaystyle^+ \alpha\displaystyle^+}{\gamma \rho\displaystyle^-}}{\dfrac{\alpha\displaystyle^-}{\gamma} + \lambda_u} < \dfrac{\dfrac{\rho\displaystyle^+ \alpha\displaystyle^+}{\gamma \rho\displaystyle^-}}{\dfrac{\alpha\displaystyle^-}{\gamma}} = \dfrac{\rho^+ \alpha^+}{\alpha\displaystyle^- \rho\displaystyle^-} = \dfrac{\rho^+ \alpha^+}{\alpha\displaystyle^+ \rho\displaystyle^+ - \rho\displaystyle^+ \rho\displaystyle^-} = \dfrac{\alpha^+}{\alpha\displaystyle^+ - \rho\displaystyle^-},
        \end{align*}
        and the upper bound on $\dfrac{y_u}{x_u}$ holds. Similarly, using the other formulation of $\dfrac{y_u}{x_u}$ from \eqref{slopeEigenv}, we have that
        \begin{align*}
            \dfrac{y_u}{x_u} = \dfrac{\dfrac{v_+ - \gamma + \alpha\displaystyle^+}{\gamma} + \lambda_u}{\dfrac{\rho\displaystyle^-\parens*{\alpha\displaystyle^- + \gamma}}{\gamma \rho\displaystyle^+}} > \dfrac{\dfrac{v_+ - \gamma + \alpha\displaystyle^+}{\gamma}}{\dfrac{\rho\displaystyle^-\parens*{\alpha\displaystyle^- + \gamma}}{\gamma \rho\displaystyle^+}} = \dfrac{\rho^+ \parens*{v_+ - \gamma + \alpha\displaystyle^+}}{\rho^- \parens*{\alpha\displaystyle^- + \gamma}}.
        \end{align*}
        Here, again, as $\gamma = v_+ \rho^+$, $v_+ - \gamma = v_+ \rho^-$, and $\alpha^+ \rho^+ - \alpha^- \rho^- = \rho^+ \rho^-$, observe that
          \begin{align*}
            \rho^- \parens*{\alpha^- + \gamma} = \alpha^- \rho^- + \gamma \rho^- &= \alpha^- \rho^- + \rho^-\rho^+ v_+ \\
            &= \parens*{\rho^+ \alpha^+ - \rho^+ \rho^-} + \rho^+ \parens*{v_+ - \gamma} = \rho^+ \parens*{v_+ - \gamma + \alpha^+ - \rho^-}.
         \end{align*}
        Then
        \begin{align*}
            \dfrac{y_u}{x_u} > \dfrac{\rho^+ \parens*{v_+ - \gamma + \alpha\displaystyle^+}}{\rho^- \parens*{\alpha\displaystyle^- + \gamma}} = \dfrac{\rho^+ \parens*{v_+ - \gamma + \alpha\displaystyle^+}}{\rho^+ \parens*{v_+ - \gamma + \alpha^+ - \rho^-}} = \dfrac{v_+ - \gamma + \alpha\displaystyle^+}{v_+ - \gamma + \alpha^+ - \rho^-},
        \end{align*}
        and the lower bound on $\dfrac{y_u}{x_u}$ also follows.
    \end{enumerate}
    That completes our proof of Lemma \ref{linearode}.
\end{proof}

\subsection{\texorpdfstring{Monotone Heteroclinic Orbit from $(1,1)$ to $(0,0)$}{Monotone Heteroclinic Orbit from (1,1) to (0,0)}}
The next result utilizes the results above to produce a \emph{heteroclinic orbit}---a solution trajectory of the dynamical system \eqref{sysFluid1} connecting the unstable equilibrium point $(1, 1)$ to the stable equilibrium point $(0, 0)$:
\begin{proposition}
\label{monoHeter}
    There exists a solution $\parens*{G_+, G_-}$ to \eqref{sysFluid1} that satisfies the following two conditions:
    \begin{enumerate}[label = (\alph*)]
        \item First, $G_+$ and $G_-$ are non-increasing in time.

        \item In addition, $\lim_{t\to-\infty} G_\pm(t) = 1$, $\lim_{t\to\infty} G_\pm(t) = 0$. Moreover, $G_+(t) \geq G_-(t)$ for each $t\in\real$, and furthermore,
          \begin{align}
        \label{tailBoundA}
            &\lim_{t\to\infty} \dfrac{\log G_\pm(t)}{t} = -\dfrac{v_+-\gamma+\alpha^+ +\alpha^- +\rho^+-\rho^-}{2\gamma} \nonumber \\
            &\qquad+\dfrac{1}{2} \sqrt{\parens*{\!\dfrac{v_+ - \gamma +\alpha^+ - \alpha^- -1}{\gamma}\!}^2 + \dfrac{4\parens*{\alpha\displaystyle^+ -\rho\displaystyle^-} \parens*{\rho\displaystyle^+ +\alpha\displaystyle^-+\gamma}}{\gamma\displaystyle^2}},
        \end{align}
        where the quantity on the right-hand side is the larger eigenvalue of $J(0, 0)$. Lastly,
          \begin{align}
            \label{tailBoundB}
            \lim_{t\to-\infty} \dfrac{\log \bigparens{1-G_\pm(t)}}{t} &= -\dfrac{v_+-\gamma+\alpha^+ +\alpha^-}{2\gamma} + \dfrac{1}{2}\sqrt{\parens*{\!\dfrac{v_+ - \gamma +\alpha^+ - \alpha^-}{\gamma}\!}^2 + \dfrac{4\alpha\displaystyle^+ \parens*{\alpha\displaystyle^-+\gamma}}{\gamma\displaystyle^2}},
        \end{align}
        where the quantity on the right-hand side is $\lambda_u$, the positive eigenvalue of $J(1, 1)$.
    \end{enumerate}
\end{proposition}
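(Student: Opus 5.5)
The plan is to take the one-dimensional unstable manifold of $(1,1)$ given by Lemma \ref{linearode}\ref{55c} and follow the branch that leaves $(1,1)$ into the square $[0,1]^2$. By the slope bounds in Lemma \ref{linearode}\ref{55c}, $y_u/x_u$ lies strictly between two numbers that both exceed $1$, so the tangent direction $-\vv_u$ (normalized so that $x_u,y_u>0$) points into the square with $1-G_->1-G_+>0$. Hence near $t=-\infty$ the branch satisfies $G_+>G_-$, and both coordinates decrease.

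The main step is a trapping-region argument. Using the identities $\rho^+(\alpha^+-\rho^-)=\alpha^-\rho^-$, $\gamma=v_+\rho^+$ and $v_+-\gamma=v_+\rho^-$ (from \eqref{pmrho} and \eqref{speedTW}), I will rewrite \eqref{sysFluid1} in simplified form. For instance,
\begin{align*}
\gamma G_-'=\rho^+G_-(G_+-1)+\alpha^-(G_+-G_-),
\end{align*}
and there is an analogous expression for $\gamma G_+'$. Let $N_\pm=\{G_\pm'=0\}$ be the nullclines, and let $\mathcal R$ be the closed region
\begin{align*}
\mathcal R=\{0\le G_-\le G_+\le 1,\ G_+'\le 0,\ G_-'\le 0\}.
\end{align*}
This is the curvilinear region between the two nullclines, with corners at $(0,0)$ and $(1,1)$. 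I will check that the vector field points into $\mathcal R$ along each boundary piece. On the diagonal $G_+=G_-=g\in(0,1)$, the combination $\rho^+G_+'+\rho^-G_-'$ from \eqref{fixPts1} and a direct evaluation should show that $G_+-G_-$ increases. On $N_-$ (where $G_-'=0$), I will compute $\frac{\di}{\di t}G_-'$ along the flow, which is $\partial_{G_+}(\gamma G_-')\,G_+'/\gamma$, and show it has the sign that pushes the trajectory back into $\{G_-'\le0\}$; this uses $G_+'<0$ there. I will do the same on $N_+$. The edges $G_+=1$ and $G_-=0$ are handled by sign checks of the field. I expect this boundary verification to be the main obstacle, since the nullclines are hyperbolas and the signs must be checked along their whole arcs inside the square. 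If $\mathcal R$ fails to be invariant somewhere, I would fall back on a Lyapunov-type monotonicity argument: show that $G_\pm'$ cannot change sign by differentiating the system and using that the off-diagonal entries of the Jacobian, $\partial_{G_-}G_+'$ and $\partial_{G_+}G_-'$, are nonnegative on the square (a cooperative-system argument). The cooperative structure would also give that $(G_+',G_-')$ stays in the negative quadrant once it starts there. The linearization at $(1,1)$ shows the unstable branch starts in the interior of $\mathcal R$, since $-\vv_u$ points strictly between the nullcline tangents, which is exactly the content of the slope bounds.

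Once the branch is in $\mathcal R$ for all $t$, $G_\pm$ are non-increasing and bounded, so they converge. The limit is an equilibrium, and by Lemma \ref{linearode}\ref{55a} it must be $(0,0)$, since $(1,1)$ is excluded by strict monotonicity. This gives (a), the limits in (b), and $G_+\ge G_-$. For the tails, near $(0,0)$ the system is a $\CCC^1$ perturbation of \eqref{linearSysFluid1}, whose eigenvalues satisfy $\lambda_1<\lambda_2<0$. By the stable manifold theorem, a trajectory converging to $(0,0)$ either lies on the strong stable manifold, which is tangent to the $\lambda_1$-eigenvector, or decays like $e^{\lambda_2 t}$ along the $\lambda_2$-eigenvector. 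The strong stable manifold is excluded, because that eigenvector has coordinates of opposite signs (Lemma \ref{linearode}\ref{55b}), while $G_\pm>0$ and both decrease to $0$. Hence $\log G_\pm(t)/t\to\lambda_2$, which is \eqref{tailBoundA} once $\lambda_2$ is written out via the trace and \eqref{realEigen}. Similarly, as $t\to-\infty$, $(1-G_+,1-G_-)\sim c\,e^{\lambda_u t}\vv_u$ with both components positive, which gives \eqref{tailBoundB}. Finally, finite first moments of $H_\pm$ follow from these exponential tails, so Lemma \ref{twreversed} turns the orbit into a traveling wave.
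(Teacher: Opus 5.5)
Your proposal is correct and follows essentially the same route as the paper. Your region $\mathcal R$ is exactly the paper's $\MMM=\{y_1(x)\le y\le y_2(x)\}$ between the two nullclines; the constraint $G_-\le G_+$ is automatic there since $y_2(x)\le x$, so the diagonal check is unnecessary. As in the paper, the unstable branch at $(1,1)$ enters this region by the slope bounds of Lemma~\ref{linearode}\ref{55c}, and the tail exponents come from the eigen-directions, with the opposite-sign $\lambda_1$-direction excluded by positivity. The only difference is cosmetic: you check inward pointing on the nullclines by differentiating $G_\mp'$ along the flow, whereas the paper argues geometrically from $\dot y=0,\ \dot x<0$ on $y=y_1(x)$ and the monotonicity of $y_1,y_2$. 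Your cooperative-system fallback is not needed.
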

\begin{remark}
    If Proposition \ref{monoHeter} holds, then for each $t\in\real$, let $H_\pm(t) := G_\pm(-t)$. The logarithmic tail asymptotics in \eqref{tailBoundA} and \eqref{tailBoundB}, along with the fact that $\lambda_2 < 0 < \lambda_u$, imply that the distributions induced by $H_\pm$ have finite first moments; hence, Lemma \ref{twreversed} applies, and Theorem \ref{travelwave}\ref{b0} follows immediately.
\end{remark}
\begin{proof}[Proof of Proposition \ref{monoHeter}]
    For convenience, let $x(\cdot) := G_+(\cdot)$ and $y(\cdot) := G_-(\cdot)$. Define
    \begin{align*}
        y_1(x) := \dfrac{x}{1+\dfrac{\rho\displaystyle^-}{\alpha\displaystyle^+-\rho\displaystyle^-}(1-x)} & & \text{and} & & y_2(x) := \dfrac{x}{1 + \dfrac{\rho\displaystyle^-}{v_+ - \gamma + \alpha\displaystyle^+ - \rho\displaystyle^-}(1-x)},
    \end{align*}
    and let $\MMM := \curlbr*{(x, y) \in [0,1]\times [0,1]\!: y_1(x) \leq y \leq y_2(x)}$. It is easy to check that if $(x, y) \in \MMM$,
    then $\dot{x} \leq 0$ and $\dot{y} \leq 0$. [A quick argument using the ODE system \eqref{sysFluid1} shows that $y\geq y_1(x)$ implies $\dot{y} \leq 0$, and $y\leq y_2(x)$ implies $\dot{x} \leq 0$.]

      By the Stable Manifold Theorem \cite[107]{Perko01} and the Hartman-Grobman Theorem \cite[120]{Perko01} (see also \cite[169]{hirsch2013differential}), there exists some $t_0 > 0$, as well as a solution trajectory $\curlbr*{\big(x(t), y(t)\big)\!: t\leq -t_0}$ such that
      \begin{align*}
        \lim_{t\to-\infty} \big(x(t), y(t)\big) = (1, 1), & & \text{and} & & \lim_{t\to-\infty} \dfrac{y(t)-1}{x(t)-1} = \dfrac{y_u}{x_u},
        \end{align*}
    where $x_u$ and $y_u$ are defined as in Lemma \ref{linearode}. Noting that
    $$
    y_1'(1) = \dfrac{\alpha\displaystyle^+}{\alpha\displaystyle^+ - \rho\displaystyle^-}, \qquad y_2'(1) = \dfrac{v_+ - \gamma + \alpha\displaystyle^+}{v_+ - \gamma + \alpha\displaystyle^+ - \rho\displaystyle^-},
    $$
    by Lemma \ref{linearode}\ref{55c}, we can pick $t_0 > 0$ sufficiently large so that $\big(x(t), y(t)\big) \in \MMM$ for all $t\leq -t_0$. In particular, both $x(t)$ and $y(t)$ are non-increasing for $t \leq -t_0$.

    Next, since the vector field is smooth and points inward along the boundary of $\MMM$, the solution starting from $\bigparens{x(-t_0), y(-t_0)}$ remains in $\MMM$; furthermore, since $\MMM$ is compact, this solution extends uniquely to every $t\geq -t_0$. Indeed, first, suppose that there exists some   $t' \geq -t_0$ such that $\big(x(t'), y(t')\big)$ lies on $\curlbr*{(x, y)\!: y = y_1(x)}$. On $y = y_1(x)$, observe that
    \begin{align*}
        \dot{y}(t') = 0 & & \text{and} & & \dot{x}(t') < 0,
    \end{align*}
    and since $y_1$ is increasing, the vector field points above the lower boundary and into $\MMM$. Similarly, if there exists some $t'' \geq -t_0$ such that $\big(x(t''), y(t'')\big)$ lies on $\curlbr*{(x, y)\!: y = y_2(x)}$, then on $y = y_2(x)$,
    \begin{align*}
        \dot{x}(t'') = 0 & & \text{and}& & \dot{y}(t'') < 0,
    \end{align*}
    and so the vector field points below the upper boundary and into $\MMM$ (again). Either way, along this solution trajectory, $x(t)$ and $y(t)$ are non-increasing for all $t \geq -t_0$. More specifically,
    \begin{align*}
        x_0 := \lim_{t\to\infty} x(t) & & \text{and} & & y_0 := \lim_{t\to\infty} y(t)
    \end{align*}
    both exist, with $x_0 < 1$ and $y_0 < 1$. Since $\bigparens{x(t), y(t)} \xrightarrow{\sspa t\to\infty \sspa} (x_0, y_0)$ and the vector field is continuous, $(x_0, y_0)$ must be an equilibrium of \eqref{sysFluid1}, and since the trajectory is non-constant, with $x_0 < 1$ and $y_0 < 1$, from Lemma \ref{linearode}, it must be the case that $x_0 = y_0 = 0$. In addition, as this solution trajectory is ``trapped'' inside $\MMM$, which lies below the line $y = x$, we have that $x(t) \geq y(t)$ for all $t\in\real$.

    The remaining task is to show that the identities for the rate of growth for $G_\pm$ hold. In particular, let us show that
    \begin{align*}
        \lim_{t\to-\infty} \dfrac{\log\bigparens{1-G_+(t)}}{t} = \lambda_u,
    \end{align*}
    and the proof of $\lim_{t\to-\infty} \dfrac{\log\bigparens{1-G_-(t)}}{t} = \lambda_u$ follows by a parallel argument. To this end, we recenter the first equation of \eqref{sysFluid1}. Using the notation we adopt for this proof, we obtain
    \begin{align*}
        \dot{x} = &-\dfrac{\rho^-}{\gamma} (x-1)(y-1) - \dfrac{v_+ -\gamma+ \alpha^+ - \rho^-}{\gamma} (x-1) + \dfrac{\rho^- \parens*{\rho^+ + \alpha^-+\gamma}}{\gamma \rho\displaystyle^+} (y-1) \\
        &- \dfrac{\rho^-}{\gamma} (y-1) - \dfrac{\rho^-}{\gamma} (x-1) - \dfrac{\rho^-}{\gamma} - \dfrac{v_+ -\gamma+ \alpha^+ - \rho^-}{\gamma} + \dfrac{\rho^- \parens*{\rho^+ + \alpha^-+\gamma}}{\gamma \rho\displaystyle^+}.
    \end{align*}
      Next, since $(1,1)$ is an equilibrium point of \eqref{sysFluid1},
    \begin{align*}
        - \dfrac{\rho^-}{\gamma} - \dfrac{v_+ -\gamma+ \alpha^+ - \rho^-}{\gamma} + \dfrac{\rho^- \parens*{\rho^+ + \alpha^-+\gamma}}{\gamma \rho\displaystyle^+} = 0,
    \end{align*}
    so
    \begin{align*}
        \dot{x} =&- \dfrac{\rho^-}{\gamma} (x-1)(y-1) - \dfrac{v_+ -\gamma+ \alpha^+ - \rho^-}{\gamma} (x-1) \\
        &\qquad + \dfrac{\rho^- \parens*{\rho^+ + \alpha^-+\gamma}}{\gamma \rho\displaystyle^+} (y-1) - \dfrac{\rho^-}{\gamma} (y-1) - \dfrac{\rho^-}{\gamma} (x-1).
    \end{align*}
    This implies
      \begin{align*}
        -\dfrac{\dot{x}}{1-x} &= -\dfrac{\rho^-}{\gamma} (y-1) - \dfrac{v_+ -\gamma+ \alpha^+ - \rho^-}{\gamma} + \dfrac{\rho^- \parens*{\rho^+ + \alpha^-+\gamma}}{\gamma \rho\displaystyle^+} \cdot \dfrac{y-1}{x-1} - \dfrac{\rho^-}{\gamma}\cdot \dfrac{y-1}{x-1} - \dfrac{\rho^-}{\gamma} \\
        &= -\dfrac{\rho^-}{\gamma} - \dfrac{v_+ -\gamma+ \alpha^+ - \rho^-}{\gamma} + \dfrac{\rho^- \parens*{\rho^+ + \alpha^-+\gamma}}{\gamma \rho\displaystyle^+} \cdot \dfrac{y_u}{x_u} - \dfrac{\rho^-}{\gamma} \cdot \dfrac{y_u}{x_u} + r(t) \\
        &= - \dfrac{v_+ - \gamma + \alpha^+}{\gamma} + \dfrac{\rho^- \parens*{\alpha^- + \gamma}}{\gamma \rho\displaystyle^+} \cdot \dfrac{y_u}{x_u} + r(t) = \lambda_u + r(t),
    \end{align*}
    where $\lim_{t\to-\infty} r(t) = 0$, and the last equality follows from \eqref{slopeEigenv}. Hence, for any $\varepsilon > 0$, there exists some $t_\varepsilon > 0$ such that for all $t\leq -t_\varepsilon$, $\lambda_u - \varepsilon \leq - \dfrac{\dot{x}(t)}{1-x(t)} \leq \lambda_u + \varepsilon$, and by integrating each of the expressions above from any $t< -t_\varepsilon$ to $-t_\varepsilon$, we obtain
    \begin{align*}
        &\phantom{\equi} \int\limits_{t}^{-t_\varepsilon} (\lambda_u - \varepsilon) \di u \leq \int\limits_{t}^{-t_\varepsilon} - \dfrac{\dot{x}(u)}{1-x(u)} \di u \leq \int\limits_{t}^{-t_\varepsilon} (\lambda_u + \varepsilon) \di u,
    \end{align*}
    or equivalently, as $\int\limits_{t}^{-t_\varepsilon} - \dfrac{\dot{x}(u)}{1-x(u)} \di u = \log\bigparens{1-x(-t_\varepsilon)} - \log\bigparens{1-x(t)}$,
    \begin{align*}
        -(\lambda_u - \varepsilon)(t+t_\varepsilon) \leq \log\bigparens{1-x(-t_\varepsilon)} - \log\bigparens{1-x(t)} \leq -(\lambda_u+\varepsilon)(t+t_\varepsilon).
    \end{align*}
    Dividing each expression by $-t \geq t_\varepsilon > 0$ gives
    \begin{align*}
        \parens*{1+\dfrac{t_\varepsilon}{t}}\parens*{\lambda_u - \varepsilon} \leq \dfrac{\log\bigparens{1-x(t)}}{t} - \dfrac{\log\bigparens{1-x(-t_\varepsilon)}}{t} \leq \parens*{1+\dfrac{t_\varepsilon}{t}}\parens*{\lambda_u + \varepsilon}.
    \end{align*}
    Lastly, observe that $\lim_{t\to-\infty} \dfrac{\log\bigparens{1-x(-t_\varepsilon)}}{t} = 0$, so by letting $t$ tend to $-\infty$, and as $\varepsilon>0$ is arbitrary, we obtain
    \begin{align*}
        \lim_{t\to-\infty} \dfrac{\log\bigparens{1-x(t)}}{t} = \lambda_u,
    \end{align*}
    which proves \eqref{tailBoundB}.

    The rest of the task is to show \eqref{tailBoundA}, which involves obtaining $\lim_{t \to \infty}\dfrac{\log G_{\pm}(t)}{t}$. To that end, first, observe that, by Theorem 2 in \cite[140--141]{Perko01}, $\lim_{t \to \infty} \dfrac{y(t)}{x(t)}$ exists and takes the form $\dfrac{y}{x}$ for an eigenvector of $J(0,0)$. By our previous arguments and Lemma \ref{linearode}\ref{55b}, this ratio has to be positive and hence correspond to an eigenvector for the larger eigenvalue $\lambda_2$.

    Next, let $s := \lim_{t\to\infty} \dfrac{y(t)}{x(t)} \in (0, \infty)$. Since the vector field is differentiable at $(0, 0)$, for large enough $t$,
    \begin{align}
    \label{vecField1}
        \begin{bmatrix} \dot{x}(t) \\ \dot{y}(t) \end{bmatrix} = J(0, 0) \begin{bmatrix} x(t) \\ y(t) \end{bmatrix} + \smallo\parens*{\lnorm*{\begin{bmatrix} x(t) \\ y(t) \end{bmatrix}}}.
    \end{align}
    Since the non-constant trajectory cannot reach the equilibrium $(0, 0)$ in finite time, for each $t\in\real$, $x(t) > 0$, so dividing both sides of \eqref{vecField1} gives
    \begin{align}
    \label{vecField2}
        \begin{bmatrix} \dot{x}(t)/x(t) \\ \dot{y}(t)/x(t) \end{bmatrix} = J(0, 0) \begin{bmatrix} 1 \\ y(t)/x(t) \end{bmatrix} + \dfrac{1}{x(t)} \smallo\parens*{\lnorm*{\begin{bmatrix} x(t) \\ y(t) \end{bmatrix}}}.
    \end{align}
    We want to extract the long-time limit of the first coordinate on the left-hand side of \eqref{vecField2}. To that end, we treat each term on the right-hand side of \eqref{vecField2} separately. For the remainder term, since $x(t) > 0$ for each $t\in\real$,
    \begin{align*}
        \dfrac{1}{x(t)} \lnorm*{\begin{bmatrix} x(t) \\ y(t) \end{bmatrix}} = \dsqrt{1 + \parens*{\dfrac{y(t)}{x(t)}}^2} \toinf{t} \dsqrt{1 + s^2} < \infty,
    \end{align*}
    so 
\mbox{
$\dfrac{1}{x(t)} \smallo\parens*{\lnorm*{\begin{bmatrix} x(t) \\ y(t) \end{bmatrix}}} \toinf{t} 0$}. Then from \eqref{vecField2}, it follows that
    \begin{align*}
        \lim_{t\to\infty} \begin{bmatrix} \dot{x}(t)/x(t) \\ \dot{y}(t)/x(t) \end{bmatrix} = J(0, 0) \begin{bmatrix} 1 \\ s \end{bmatrix} = \lambda_2 \begin{bmatrix} 1 \\ s \end{bmatrix},
    \end{align*}
    and extracting the first coordinate gives $\lim_{t\to\infty} \dfrac{\dot{x}(t)}{x(t)} = \lambda_2$. Thus, for any $r\in\real$,
    \begin{align*}
        \dfrac{\log x(t)}{t} = \dfrac{\log x(r)}{t} + \dfrac{1}{t} \int\limits_r^t \dfrac{\dot{x}(z)}{x(z)} \di z \toinf{t} \lambda_2.
    \end{align*}
    At the same time, since $\lim_{t\to\infty} \dfrac{y(t)}{x(t)} = s \in (0, \infty)$,
    \begin{align*}
        \dfrac{\log y(t)}{t} = \dfrac{\log x(t)}{t} + \dfrac{1}{t} \log\parens*{\dfrac{y(t)}{x(t)}} \toinf{t} \lambda_2.
    \end{align*}
    Hence, $\lim_{t\to\infty} \dfrac{\log G_-(t)}{t} = \lambda_2$, so \eqref{tailBoundA} holds as well. Together with \eqref{tailBoundB}, this gives us Proposition \ref{monoHeter}, as desired.
 \end{proof}
Now, with Theorem \ref{travelwave} proven, we shall examine the special regimes where $\alpha^-$ is very small (or very large), as observed in Corollary \ref{tailclean}:
\begin{proof}[Proof of Corollary \ref{tailclean}]
We shall proceed:
\begin{enumerate}[label = (\alph*)]
    \item First, let us examine the asymptotic behaviors of $\rho^+$ and $\gamma$ as $\alpha^-$ shrinks to zero. Recall that $\gamma=v_+\rho^+$, and let $s:=\alpha^++\alpha^-$ be chosen. (Accordingly, since $\alpha^+ \geq 1$, we also have $s\geq 1$ as well.) With that,
    \begin{align*}
        \rho^+=\frac{1-s+\sqrt{(1-s)\displaystyle^2+4\alpha^-}}{2}.
    \end{align*}
    In addition, let $d := \alpha^+ - \alpha^-$, and note that $\lim_{\alpha^-\downarrow 0} d = s$, which implies that $\varepsilon:=s-d \downarrow 0$. Also, note that $\alpha^- = \frac{\varepsilon}{2}$ and $\alpha^+ = s-\frac{\varepsilon}{2}$; hence,
    \begin{align*}
        \rho^+ = \frac{1-s+\sqrt{(1-s)\displaystyle^2+2\varepsilon}}{2} = \frac{\varepsilon}{\sqrt{(s-1)\displaystyle^2+2\varepsilon}+s-1} =\frac{\varepsilon}{2(s-1)}+ \bigO\parens*{\varepsilon^2}.
    \end{align*}
    This, in turn, implies that
    \begin{align*}
        \gamma=v_+\rho^+=\frac{v_+\varepsilon}{\sqrt{(s-1)\displaystyle^2+2\varepsilon}+s-1}=\frac{v_+}{2(s-1)} \varepsilon+\bigO\parens*{\varepsilon^2},
    \end{align*}
    and the asymptotic behaviors for $\rho^+$ and $\gamma$ thus follow.

    Next, we want to look at the left-tail and the right-tail behavior in the small-$\alpha^-$-regime. We take the case of the right-tail first. Recall that
    \begin{align*}
        \lambda_R = -\frac{v_+-\gamma+\alpha^++\alpha^-}{2\gamma} +\frac{1}{2}\sqrt{ \parens*{\frac{v_+-\gamma+\alpha^+-\alpha^-}{\gamma}}^2 + \frac{4\alpha^+(\alpha^-+\gamma)}{\gamma\displaystyle^2}}.
    \end{align*}
    Now, substituting $\alpha^- = \frac{\varepsilon}{2}$ and $\alpha^+ = s-\frac{\varepsilon}{2}$, and treating $\lambda_R$ as a function of $\varepsilon$ give
    \begin{align*}
        \lambda_R = \lambda_R(\varepsilon) &= -\frac{v_+-\gamma+s}{2\gamma} +\frac{1}{2}\sqrt{\left(\frac{v_+-\gamma+s-\varepsilon}{\gamma}\right)^2 +\frac{4\left(s-\frac{\varepsilon}{2}\right)\left(\frac{\varepsilon}{2}+\gamma\right)}{\gamma\displaystyle^2}} \\
        &= \frac{-(v_+-\gamma+s)+\sqrt{(v_+-\gamma+s-\varepsilon)\displaystyle^2 + 4\left(s-\frac{\varepsilon}{2}\right)\left(\frac{\varepsilon}{2}+\gamma\right)}}{2\gamma}.
    \end{align*}
    For shorthand, define $C_\varepsilon:=v_+-\gamma+s$, and let
    \begin{align*}
        \delta_\varepsilon &:= (C_\varepsilon-\varepsilon)^2-C_\varepsilon^2+4\left(s-\frac{\varepsilon}{2}\right)\left(\frac{\varepsilon}{2}+\gamma\right) = -2C_\varepsilon\varepsilon+\varepsilon^2+4\left(s-\frac{\varepsilon}{2}\right)\left(\frac{\varepsilon}{2}+\gamma\right),
    \end{align*}
    so that $\lambda_R(\varepsilon)$ can be written (more compactly) as $\lambda_R(\varepsilon) = \frac{-C_\varepsilon+\sqrt{C\displaystyle^2_\varepsilon+\delta_\varepsilon}}{2\gamma}$. As $\gamma = \bigO(\varepsilon)$, expanding $\delta_\varepsilon$ gives
    \begin{align*}
        \delta_\varepsilon &= -2C_\varepsilon\varepsilon+\varepsilon^2+2s\varepsilon+4s\gamma-\varepsilon^2-2\varepsilon\gamma \\
        &= \varepsilon\bigl(-2C_\varepsilon - 2 \gamma +2s\bigr)+4s\gamma\\
        &=-2v_+\varepsilon + 4s\gamma = \varepsilon\left(-2v_+ + \frac{2sv_+}{s-1}\right) + \bigO\parens*{\varepsilon^2} = \frac{2v_+}{s-1}\varepsilon + \bigO\parens*{\varepsilon^2},
    \end{align*}
    The last display gives us that $\delta_\varepsilon= \bigO(\varepsilon)$, and so $\sqrt{C\displaystyle^2_\varepsilon+\delta_\varepsilon} = C_\varepsilon + \frac{\delta_\varepsilon}{2C_\varepsilon} + \bigO\parens*{\varepsilon^2}$. Plugging this into $\lambda_R(\varepsilon)$ yields
    \begin{align*}
        \lambda_R(\varepsilon) = \frac{1}{2\gamma}\left(\frac{\delta_\varepsilon}{2C_\varepsilon}+\bigO\parens*{\varepsilon^2}\right) = \frac{\delta_\varepsilon}{4\gamma C_\varepsilon} + \bigO(\varepsilon).
    \end{align*}
    Now, since $\dfrac{\rho^+}{\alpha\displaystyle^-} \xrightarrow{\sspa \alpha^- \downarrow 0 \sspa} \dfrac{1}{s-1}$ and $\dfrac{\gamma}{\alpha\displaystyle^-} \xrightarrow{\sspa \alpha^- \downarrow 0 \sspa} \dfrac{v_+}{s-1}$, we have that $C_\varepsilon \xrightarrow{\sspa \alpha^- \downarrow 0 \sspa} v_++s$ and $\frac{\gamma}{\varepsilon} \xrightarrow{\sspa \alpha^- \downarrow 0 \sspa} \frac{v_+}{2(s-1)}$. That gives us
    \begin{align*}
        \lambda_R(\varepsilon) = \frac{\frac{2v_+}{s-1}\varepsilon + \bigO\parens*{\varepsilon^2}}{4\gamma(v_++s)} + \bigO(\varepsilon) &= \frac{\frac{2v_+}{s-1}\varepsilon + \bigO\parens*{\varepsilon^2}}{4\left(\frac{v_+}{2(s-1)}\varepsilon + \bigO\parens*{\varepsilon^2}\right)(v_+ + s)} + \bigO(\varepsilon) \\
        &= \frac{1}{v_++s} + \bigO(\varepsilon),
    \end{align*}
    which is the appropriate asymptotic behavior of the right-tail exponent.

    Next, we shall take up the asymptotic behavior of the left-tail. To that end, let
    \begin{align*}
        A &:= v_+-\gamma+\alpha^++\alpha^-+\rho^+-\rho^-, \\
        M &:= v_+-\gamma+\alpha^+-\alpha^--1,
        \intertext{and}
        K &:= (\alpha^+-\rho^-)(\rho^++\alpha^-+\gamma).
    \end{align*}
    In addition, $\lambda_L$ can be treated as a function of $d = \alpha^+ - \alpha^-$:
    \begin{align*}
        \lambda_L = \lambda_L(d) = \frac{A-\sqrt{M\displaystyle^2 + 4K}}{2\gamma}.
    \end{align*}
    Now, observe that $A$ can be re-expressed as
    \begin{align*}
        A = v_+-\gamma+s+\rho^+-\rho^- = v_+-\gamma+s+2\rho^+-1 = v_+-\gamma+\sqrt{(s-1)\displaystyle^2+2\varepsilon},
    \end{align*}
    and
    \begin{align*}
        M=v_+-\gamma+d-1 = v_+-\gamma+s-\varepsilon-1.
    \end{align*}
    [In addition, $M \xrightarrow{\sspa \alpha^- \downarrow 0 \sspa} v_++s-1>0$.] Naturally,
    \begin{align*}
        A-M = \sqrt{(s-1)^2+2\varepsilon}-(s-1)+\varepsilon = \frac{s}{s-1}\varepsilon + \bigO\parens*{\varepsilon^2}.
    \end{align*}
    We now look at the asymptotic behavior of $K$. With $\alpha^+-\rho^- \xrightarrow{\sspa \alpha^- \downarrow 0 \sspa} s-1$ and
    \begin{align*}
        \frac{\rho^++\alpha^-+\gamma}{\varepsilon} \xrightarrow{\sspa \alpha^- \downarrow 0 \sspa} \frac{1}{2(s-1)}+\frac{1}{2}+\frac{v_+}{2(s-1)} = \frac{v_++s}{2(s-1)},
    \end{align*}
    we have $K=\frac{v_++s}{2} \varepsilon + \bigO\parens*{\varepsilon^2}$.  Putting the pieces together, we have
    \begin{align*}
        \sqrt{M\displaystyle^2+4K} = M+\frac{2K}{M} + \bigO\parens*{\varepsilon^2},
    \end{align*}
    which implies
    \begin{align*}
        A-\sqrt{M\displaystyle^2+4K} &= (A-M)-\frac{2K}{M} + \bigO\parens*{\varepsilon^2} \\
        &= \parens*{\frac{s}{s-1}-\frac{v_++s}{v_++s-1}}\varepsilon + \bigO\parens*{\varepsilon^2} = \frac{v_+\varepsilon}{(s-1)(v_++s-1)} + \bigO\parens*{\varepsilon^2}.
    \end{align*}
    Lastly, recall that in this regime, $2\gamma = \frac{v_+\varepsilon}{s-1} + \bigO\parens*{\varepsilon^2}$, and the asymptotic behavior of the left-tail follows.

    \item Next, we examine the large-$\alpha^-$ regime, where $\alpha^- \uparrow \infty$ and $\alpha^+ = \smallo\parens*{\alpha^-}$. In this case, with
    \begin{align*}
        \rho^+ = \frac{1-\alpha^+ - \alpha^- +\sqrt{\parens*{1-\alpha^+ - \alpha^-}\displaystyle^2+4} \alpha^-}{2},
    \end{align*}
    we have
    \begin{align*}
        \rho^- = 1-\rho^+ &= \frac{1 + \alpha^+ + \alpha^- - \sqrt{(1- \alpha^+ -\alpha^-)\displaystyle^2 + 4\alpha^-}}{2} \\
        &= \frac{2\alpha^+}{\sqrt{\parens*{\alpha\displaystyle^+ + \alpha\displaystyle^- - 1}\displaystyle^2+4\alpha^-}+(1 + \alpha^+ + \alpha^-)} = \frac{\alpha^+}{\alpha^+ + \alpha^-} \bigparens{1 + \smallo(1)} \\
        &= \frac{\alpha^+}{\alpha^-} \bigparens{1 + \smallo(1)},
    \end{align*}
    and so $\gamma=v_+\rho^+=v_+\parens*{1-\rho\displaystyle^-}=v_+-v_+ \rho^- = v_+ - \frac{v_+\alpha^+}{\alpha^-}\bigparens{1 + \smallo(1)}$, which gives the asymptotic behaviors for $\rho^-$ and $\gamma$.

    We come to the tails next---first, the right tail. With $C_R := v_+-\gamma+ \alpha^+ + \alpha^-$, we have
    \begin{align*}
        \lambda_R = \frac{-C_R+\sqrt{\parens*{v_+-\gamma + \alpha\displaystyle^+ - \alpha\displaystyle^-}\displaystyle^2+4\alpha^+(\alpha\displaystyle^- +\gamma)}}{2\gamma}.
    \end{align*}
    In addition, from the facts that $v_+-\gamma=v_+ \rho^-$ and $\parens*{\rho\displaystyle^+}^2+\parens*{\alpha^+ + \alpha^- - 1}\rho^+ - \alpha^- = 0$, we have (after algebraic manipulations)
    \begin{align*}
        \parens*{v_+-\gamma+\alpha^+ - \alpha^-}^2 + 4\alpha^+(\alpha^-+\gamma)=C^2_R+4\gamma \rho^-.
    \end{align*}
    Then we can rewrite $\lambda_R$ as
    \begin{align*}
        \lambda_R = \frac{\sqrt{C\displaystyle^2_R+4\gamma \rho\displaystyle^-}-C_R}{2\gamma} = \frac{2\rho^-}{\sqrt{C\displaystyle^2_R+4\gamma \rho\displaystyle^-}+C_R}.
    \end{align*}
    Now, since $\rho^- = \smallo(1)$ and $\gamma \xrightarrow{\sspa \alpha^- \uparrow \infty \sspa} v_+$, we have
    \begin{align*}
        C_R = \alpha^+ + \alpha^- + v_+\rho^- = \parens*{\alpha^+ + \alpha^-} \bigparens{1+\smallo(1)},
    \end{align*}
    and putting everything together,
    \begin{align*}
        \lambda_R = \frac{2\rho^-}{\sqrt{C\displaystyle^2_R+4\gamma \rho\displaystyle^-}+C_R} = \dfrac{\rho^-}{C_R} \bigparens{1 + \smallo(1)} =\frac{\alpha^+}{\parens*{\alpha\displaystyle^+ + \alpha\displaystyle^-}\displaystyle^2}\bigparens{1+\smallo(1)} = \frac{\alpha^+}{(\alpha^-)^2} \bigparens{1+\smallo(1)},
    \end{align*}
    which gives the right-tail exponent asymptotics.

    Lastly, we examine the asymptotic behavior of the left tail. With $A$, $M$, and $K$ defined earlier in this proof,
    \begin{align*}
        \lambda_L = \frac{A-\dsqrt{M\displaystyle^2+4K}}{2\gamma}.
    \end{align*}
    More specifically, in this regime, observe that
    \begin{align*}
        A = v_+-\gamma+ \alpha^+ + \alpha^- +1-2\rho^- = \alpha^+ + \alpha^-+1 + o\parens*{\alpha^+ + \alpha^-}.
    \end{align*}
    Again, from the facts that $v_+-\gamma=v_+ \rho^-$ and $\parens*{\rho\displaystyle^+}^2+\parens*{\alpha^+ + \alpha^- - 1}\rho^+ - \alpha^- = 0$, after algebraic manipulations,
    \begin{align*}
        A^2 - \parens*{M\displaystyle^2+4K} = 4\gamma \rho^-.
    \end{align*}
    With this, in light of the asymptotic behavior of $A$, $\alpha^+$, and $\alpha^-$, we have
    \begin{align*}
        \lambda_L = \dfrac{A\!-\!\dsqrt{A\displaystyle^2\!-\!4\gamma \rho\displaystyle^-}}{2\gamma} &= \dfrac{2\rho\displaystyle^-}{A+\dsqrt{A\displaystyle^2-4\gamma \rho^-}} \\
        &= \frac{\rho^-}{A}\bigparens{1\!+\!\smallo(1)}\!=\!\frac{\alpha^+}{\parens*{\alpha\displaystyle^+\!+\!\alpha\displaystyle^-} \parens*{\alpha\displaystyle^+\!+\!\alpha\displaystyle^-\!+\! 1}}\bigparens{1\!+\! \smallo(1)}\!=\!\frac{\alpha^+}{\parens*{\alpha\displaystyle^-}\displaystyle^2} \bigparens{1\!+\!\smallo(1)},
    \end{align*}
    as desired.
\end{enumerate}
That gives us the proof of Corollary \ref{tailclean}.
\end{proof}

\textbf{Acknowledgements:} SB and AN were supported in part by the NSF-CAREER award DMS-2141621. SB was supported in part by the NSF-RTG award DMS-2134107.

\printbibliography[heading = bibintoc]

\vspace{\baselineskip}

\noindent{\scriptsize {\textsc{\noindent S. Banerjee and A. Nguyen,\newline
Department of Statistics and Operations Research\newline
University of North Carolina\newline
Chapel Hill, NC 27599, USA\newline
email: sayan@email.unc.edu
\newline
email: andrwn@unc.edu
\vspace{\baselineskip} } }}
\end{document}